\pdfoutput=1
\documentclass[reqno]{shinyart}
\usepackage[utf8]{inputenc}
\usepackage{shinybib}
\usepackage{enumitem}
\setlist[enumerate]{label=(\roman*)}
\usepackage{booktabs}
\usepackage[exponent-product=\cdot]{siunitx}
\usepackage{graphicx}

\newcommand{\rra}{\rightrightarrows}

\newcommand{\rha}{\rightharpoonup}
\DeclareMathOperator{\cl}{\mathrm{cl}}
\DeclareMathOperator{\graph}{\mathrm{graph}}
\def\dA{\delta A}
\def\dV{\delta V}

\def\A{{\cal A}}
\def\Ah{\A_k}
\def\U{{\cal U}}
\def\Uh{\U_k}
\def\T{{\cal T}}
\def\Th{\T_k}

\def\div{\nabla\cdot}
\def\grad{\nabla}
\def\hess{\nabla^2}
\def\dA{\delta A}

\def\BV{\mbox{BV}}
\def\TV{\mbox{TV}}

\addbibresource{noncoerciveip.bib}

\begin{document}
\title{Contingent Derivatives and regularization for noncoercive inverse problems}
\subtitle{\lowercase{Dedicated to the blessed memory of Jonathan M. Borwein}}
\author{%
    Christian Clason\footnotemark[3]
    \and 
    Akhtar A. Khan\footnotemark[1]
    \and 
    Miguel Sama\footnotemark[9] 
    \and 
    Christiane Tammer\footnotemark[7]%
}  
\date{February 14, 2018}
\maketitle

\footnotetext[1] {Center for Applied and Computational Mathematics, School of Mathematical Sciences, Rochester Institute of Technology, 85 Lomb Memorial Drive, Rochester, New York, 14623, USA. (\email{aaksma@rit.edu})}
\footnotetext[3]  {Faculty of Mathematics, University of Duisburg-Essen, 45117 Essen, Germany.  (\email{christian.clason@uni-due.de})}
\footnotetext[9] {Departamento de Matem\'atica Aplicada, Universidad Nacional de Educaci\'on a Distancia, Calle  Juan del Rosal, 12, 28040 Madrid, Spain. (\email{msama@ind.uned.es})}
\footnotetext[7]{Institute of Mathematics, Martin-Luther-University of Halle-Wittenberg, Theodor-Lieser-Str. 5, D-06120 Halle-Saale, Germany. (\email{christiane.tammer@mathematik.uni-halle.de})}

\begin{abstract}
    We study the inverse problem of parameter identification in non-coercive variational problems that commonly appear in applied models. We examine the differentiability of the set-valued parameter-to-solution map by using the first-order and the second-order contingent derivatives. We explore the inverse problem by using the output least-squares and the modified output least-squares objectives. By regularizing the non-coercive variational problem, we obtain a single-valued regularized parameter-to-solution map and investigate its smoothness and boundedness. We also consider optimization problems using the output least-squares and the modified output least-squares objectives for the regularized variational problem. We give a complete convergence analysis showing that for the output least-squares and the modified output least-squares, the regularized minimization problems approximate the original optimization problems suitably. We also provide the first-order and the second-order adjoint method for the computation of the first-order and the second-order derivatives of the output least-squares objective. We provide discrete formulas for the gradient and the Hessian calculation and present numerical results.

    \vspace*{0.5\baselineskip}

    \noindent{\color{structure}2000 Mathematics Subject Classification}\quad \textsc{35R30, 49N45, 65J20, 65J22, 65M30}
\end{abstract}

\section{Introduction}\label{sec:intro}

Non-coercive variational problems frequently emerge from applied models (see \cite{Goe96}).  However, often less general versions of such models are studied under the coercivity assumption so that useful technical tools can be employed. For instance, in all the available literature on the inverse problem of identifying a variable parameter in elliptic partial differential equations using a variational framework, the bilinear form has always been chosen to be coercive. The coercivity ensures that the variational problem is uniquely solvable and retains stability on the data perturbation. Under coercivity, the parameter-to-solution map is single-valued, well-defined, and infinitely differentiable. Furthermore, coercivity also plays a decisive role in local stability estimates, see \cite{JadKhaSamTam17}. Although the solvability of a noncoercive variational problem can be ensured by other tools (see \cite{BaiGasTom86}), the parameter-to-solution map, in this case, is a set-valued map. Therefore, for parameter identification in noncoercive variational problems, to derive optimality conditions, one has to employ a suitable notion of a derivative of set-valued maps. Therefore, the known techniques need to be altered significantly to cope with the involvement of such technical tools. For an overview of the recent developments in the vibrant and expanding field of inverse problems, the reader is referred to  \cite{AmmGarJou10,BoeUlb15,BoiKal16,Cla12,CroGocJadKhaWin14,EvsMedSmi16,GhoManBir16,GocJadKhaTamWin15,GucBan15,
HagNgoYasZha15,KinMutRes14,KhaMot18,KirPosResSch15,KucSte15,Liu16,ManGun00,NeuHeiHofKinTau10}.

A prototypical example of a non-coercive variational problem is the weak formulation of pure Neumann boundary value problem (BVP): Given a bounded open domain $\Omega$ and the unit outer unit normal $n$, consider the problem of finding $u$ such that
\begin{equation}\label{Neumann}
    -\nabla \cdot(a\nabla u)=f\ \  \mbox{in}\ \Omega,\  \frac{\partial u}{\partial n}=g\ \  \mbox{on}\ \partial\Omega,
\end{equation}
where $\frac{\partial u}{\partial n}$ is the outer normal derivative of $u$ on the boundary $\partial \Omega$, and $f$ and $g$ are two given functions. It is known that the weak form of the above BVP leads to a noncoercive bilinear form. Moreover,  \eqref{Neumann} is solvable only under the compatibility condition
\begin{equation*}
    \int_{\Omega}f+\int_{\partial \Omega}g=0,
\end{equation*}
whereas, as a consequence of Fredholm alternate, there are infinitely many solutions, with any two solutions only differing by a constant. Furthermore, among these solutions, there is a unique solution under the additional constraint $\int_{\Omega}u=0$.

All the research on the inverse problems of parameter identification in pure Neumann BVP,  the constraint $\int_{\Omega}u=0$ and the compatibility condition have been imposed so that the parameter-to-solution map $a\to u(a)$ is well-defined and single-valued.

The primary objective of this work is to conduct a thorough study of the inverse problem of parameter identification in noncoercive variational problems. However, before going into the details of the main contribution of this article, we provide a brief review of the existing approaches for parameter identification in partial differential equations (PDEs) and variational problems by focusing on the role of coercivity. Let $B$ be a Banach space and  let $A$ be a closed, and  convex subset of $B$ with a nonempty interior. Given a Hilbert space  $V$, let $T:B\times V\times V\rightarrow\mathbb{R}$ be a trilinear form with $T(a,u,v)$ symmetric in $u$, $v$, and let $m$ be a bounded linear functional on $V$. Assume there are constants $\alpha >0$ and $\beta >0$ such that the following continuity (cf. \eqref{Cont}) and coercivity (cf. \eqref{Coer}) conditions hold:
\begin{align}
    T(a,u,v)&\le \beta\|a\|_{B}\|u\|_{V}\|v\|_{V},\ \ \text{for all}\ u,v\in V,\ a\in B,\label{Cont}\\
    T(a,u,u)&\ge\alpha\|u\|_{V}^2,\ \text{for all}\ u\in V,\ a\in A. \label{Coer}
\end{align}
Consider the following variational problem: Given $a\in A$, find $u=u(a)\in V$ such that
\begin{equation}\label{VP1}
    T(a,u,v)=m(v),\ \ \text{for every}\ v\in V.
\end{equation}
Due to the conditions imposed on the trilinear map $T$, the Riesz representation theorem ensures that for any $a\in A$, variational problem \eqref{VP1} admits a unique solution $u(a)$ (see \cite{Goe96}). The inverse problem now seeks the parameter $a$ in \eqref{VP1} from a measurement $z$ of $u$. This inverse problem  is often studied in an optimization framework, either formulating the problem as an unconstrained optimization problem or treating it as a
constrained optimization problem in which the variational problem itself is the
constraint.

Given a Banach space $Z\supseteq V$ equipped with the norm $\|\cdot\|_Z$, the most commonly adopted optimization framework minimizes the following output least-squares (OLS) functional
\begin{equation}\label{OLS0}
    J_1(a):=\|u(a)-z\|^2_Z
\end{equation}
where $z\in Z$ is the data (the measurement of $u$) and $u(a)$ solves the
variational form \eqref{VP1}.

For an insight into the abstract framework, consider the boundary value problem (BVP)
\begin{equation}\label{scalar}
    -\nabla \cdot(a\nabla u)=f\ \  \mbox{in}\ \Omega,\  u=0\ \  \mbox{on}\ \partial\Omega,
\end{equation}
where $\Omega$ is a suitable domain in $\mathbb{R}^2$ or $\mathbb{R}^3$ and $\partial
\Omega$ is its boundary. BVP \eqref{scalar} models useful real-world problems and has been studied in detail. For example, in \eqref{scalar}, $u=u(x)$ may
represent the steady-state temperature at a point $x$ of a
body; then $a$ would be a variable thermal conductivity coefficient,
and $f$ the external heat source. BVP \eqref{scalar} also
models underground steady state aquifers in which the parameter $a$ is the aquifer transmissivity coefficient, $u$ is the hydraulic head, and $f$ is the recharge. The inverse problem in the context of \eqref{scalar} is to estimate
the parameter $a$ from a measurement $z$ of the solution $u$.

For \eqref{scalar} with $Z=L_2(\Omega)$, optimization problem \eqref{OLS0} reduces to minimizing \begin{equation}\label{ols}
    \tilde{J}_1(a):=\int_{\Omega}(u(a)-z)^2,
\end{equation}
where $z$ is the measurement of $u$ and $u(a)$ solves the
variational form of \eqref{scalar} given by
\begin{equation}\label{vpp}
    \int_{\Omega}a\nabla u\cdot\nabla v=\int_{\Omega}fv, \ \ \text{for all}\,v\in H_0^1(\Omega).
\end{equation}
A variant of \eqref{OLS0} is the following modified OLS functional (MOLS) introduced in \cite{GocKha07}
\begin{equation} \label{MOLS}
    J_2(a)=T(a,u(a)-z,u(a)-z),
\end{equation}
where $z$ is the data and $u(a)$ solves \eqref{VP1}. In \cite{GocKha07}, the author established that \eqref{MOLS} is convex and used it to estimate the Lam\'e moduli in the equations  of isotropic elasticity. Studies related to MOLS functional and its extensions can be found in \cite{GocJadKha06,GocKha09,JadKhaRusSamWin14}.

The MOLS functional, given in  \eqref{MOLS}, was inspired by Knowles~\cite{Kno01} who minimized  a
coefficient-dependent norm
\begin{equation}\label{ols1}
    \tilde{J}_2(a):=\int_{\Omega} a\nabla (u(a)-z)\cdot\nabla (u(a)-z),
\end{equation}
where $z$ is the measurement of $u$, and $u(a)$ solves
\eqref{vpp}.

Besides the OLS functional  and the MOLS functional, there are other approaches. The equation error method (cf.
\cite{Aca93,AlJGoc12,Kar97}), for \eqref{scalar}, consists of minimizing the functional
\begin{equation*}
    \tilde{J}_3(a):=\|\nabla (a\nabla z)+f\|^2_{H^{-1}(\Omega)}
\end{equation*}
where $H^{-1}(\Omega )$ is the  dual of $H_0^1(\Omega )$ and $z$ is
the data. In \cite{GocJadKha08}, the equation error approach was explored in an abstract framework.

Finally, we recall the following two results from \cite{GocKha07} for the parameter-to-solution map.
\begin{lemma}\label{LCE} For any $a\in A$, the solution $u(a)$ of the variational problem \eqref{VP1} satisfies $\|u(a)\|_V\leq\alpha^{-1}\|m\|_{V^*}$. Moreover, for any $a,b\in A$, we have
    \begin{equation}\label{CE}
        \|u(a)-u(b)\|_V\leq \min\left\{ \frac{\beta}{\alpha}\|u(a)\|_V,\frac{\beta}{\alpha}\|u(b)\|_V,\frac{\beta}{\alpha^2}\|m\|_{V^*}\right\}\|b-a\|_B.
    \end{equation}
\end{lemma}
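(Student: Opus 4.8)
The plan is to obtain all three quantities in the minimum from two elementary ingredients: testing the variational equation against the solution itself, and exploiting the trilinearity of $T$ via an add-and-subtract maneuver. For the a priori bound, I would take $v = u(a)$ in \eqref{VP1}, giving $T(a, u(a), u(a)) = m(u(a))$. Coercivity \eqref{Coer} bounds the left-hand side below by $\alpha \|u(a)\|_V^2$, while boundedness of $m$ bounds the right-hand side above by $\|m\|_{V^*} \|u(a)\|_V$; dividing by $\|u(a)\|_V$ (the case $u(a) = 0$ being trivial) yields $\|u(a)\|_V \leq \alpha^{-1} \|m\|_{V^*}$.

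For the Lipschitz estimate, I would subtract the two defining equations $T(a, u(a), v) = m(v)$ and $T(b, u(b), v) = m(v)$ so that the right-hand sides cancel, then insert and remove the mixed term $T(a, u(b), v)$ and use linearity of $T$ in its first and second arguments separately to rewrite the result as
\begin{equation*}
    T(a, u(a) - u(b), v) = T(b - a, u(b), v) \quad \text{for all } v \in V.
\end{equation*}
Choosing $v = u(a) - u(b) =: w$ and applying coercivity \eqref{Coer} (available since $a \in A$) on the left and continuity \eqref{Cont} on the right gives
\begin{equation*}
    \alpha \|w\|_V^2 \leq \beta \|b - a\|_B \|u(b)\|_V \|w\|_V,
\end{equation*}
whence $\|w\|_V \leq \frac{\beta}{\alpha} \|u(b)\|_V \|b - a\|_B$. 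Removing instead the mixed term $T(b, u(a), v)$ and invoking coercivity with $b \in A$ yields the symmetric bound with $\|u(a)\|_V$ in place of $\|u(b)\|_V$; substituting the a priori estimate into either of these produces the third candidate $\frac{\beta}{\alpha^2} \|m\|_{V^*} \|b - a\|_B$. Taking the minimum of the three gives \eqref{CE}.

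The argument is essentially routine, and I expect the only delicate point to be the bookkeeping in the add-and-subtract step, where one must keep the correct parameter in the coercive slot. Because both $a$ and $b$ lie in $A$, coercivity is available for either choice, and this is exactly what allows the extraction of both the $\|u(a)\|_V$ and $\|u(b)\|_V$ versions of the bound, and hence the full minimum.
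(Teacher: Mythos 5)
Your proof is correct. The paper itself does not prove this lemma (it is recalled from the cited reference \cite{GocKha07}), but your argument --- testing with $u(a)$ plus coercivity for the a priori bound, and the add-and-subtract of the mixed term with coercivity applied in either the $a$- or $b$-slot to obtain both one-sided Lipschitz bounds, then inserting the a priori estimate for the third --- is exactly the standard derivation used for this result.
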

\begin{lemma}\label{d1} 
    For each $a$ in the interior of $A$, the solution $u(a)$ of the variational problem \eqref{VP1} is infinitely
    differentiable at $a$. Given $u=u(a)$, the first derivative
    $Du(a)\delta a$ of $u(a)$ in the direction $\delta a$, is the unique solution of the following variational equation
    \begin{equation}\label{var4}
        T(a,Du(a)\delta a,v)=-T(\delta a,u,v),\ \forall\,v\in V.
    \end{equation}
    Furthermore, the second derivative $D^2u(a)(\delta a_1,\delta a_2)$ of $u(a)$ in the direction $(\delta a_1,\delta a_2)$, is the unique solution of the
    following variational equation
    \begin{equation}\label{SOD}
        T(a,D^2u(a)(\delta a_1,\delta a_2),v)=-T(\delta a_2,Du(a)\delta a_1,v)-T(\delta a_1,Du(a)\delta a_2,v),\ \forall\,v\in V.
    \end{equation}
    Moreover, the following bounds hold:
    \begin{alignat}{2}
        \|Du(a)\|&\leq &\frac{\beta}{\alpha}\|u(a)\|_V&\leq \frac{\beta}{\alpha^2}\|m\|_{V^*},\label{FOB}\\
        \left\|D^2u(a)\right\|&\leq &\,\frac{2\beta^2}{\alpha^2}\|u(a)\|_V&\leq\frac{2\beta^2}{\alpha^3}\|m\|_{V^*}.\label{SOB}
    \end{alignat}
\end{lemma}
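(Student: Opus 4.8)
The plan is to treat the variational problem \eqref{VP1} as an implicit equation $F(a,u)=0$ in $V^*$, where $F(a,u)$ is the functional $v\mapsto T(a,u,v)-m(v)$, and to exploit the trilinearity of $T$. Since $T$ is linear in each slot, the map $(a,u)\mapsto F(a,u)$ is bilinear and continuous by \eqref{Cont}, hence a polynomial map and therefore infinitely Fr\'echet differentiable. The partial derivative $\partial_u F(a,u)$ is the bounded linear operator $\delta u\mapsto T(a,\delta u,\cdot)\in V^*$, which by coercivity \eqref{Coer} and the Lax--Milgram/Riesz theorem is a topological isomorphism of $V$ onto $V^*$ for every $a\in A$. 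For $a$ in the interior of $A$ this invertibility persists on a neighborhood, so the implicit function theorem yields that $a\mapsto u(a)$ is infinitely differentiable there. This settles the smoothness claim, and the explicit form of the derivatives then follows by differentiating the identity $T(a,u(a),v)=m(v)$ in the directions $\delta a$ and $\delta a_2$.

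Alternatively --- and this is the route I would actually write out, since it is self-contained and simultaneously delivers the bounds --- I would argue by difference quotients. Fixing $a$ in the interior and a direction $\delta a$, I let $w$ be the unique solution of \eqref{var4} (unique by Lax--Milgram under \eqref{Coer}). Subtracting the variational identities for $u(a+t\delta a)$ and $u(a)$, expanding by linearity of $T$ in its first slot, dividing by $t$, and subtracting \eqref{var4}, one finds that the remainder $r_t:=t^{-1}(u(a+t\delta a)-u(a))-w$ satisfies $T(a,r_t,v)=-T(\delta a,u(a+t\delta a)-u(a),v)$ for all $v\in V$. Testing with $v=r_t$ and using \eqref{Coer} on the left and \eqref{Cont} on the right gives $\|r_t\|_V\le(\beta/\alpha)\|\delta a\|_B\|u(a+t\delta a)-u(a)\|_V$, and the continuity estimate \eqref{CE} of Lemma~\ref{LCE} bounds the last factor by a constant times $t$; hence $r_t\to0$ as $t\to0$, proving $Du(a)\delta a=w$ solves \eqref{var4}. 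The second-order equation \eqref{SOD} comes out of the same scheme applied once more to \eqref{var4}, now differentiating in a second direction $\delta a_2$.

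For the bounds I would test the derivative equations against the derivatives themselves. Choosing $v=Du(a)\delta a$ in \eqref{var4} and applying \eqref{Coer} on the left and \eqref{Cont} on the right yields $\alpha\|Du(a)\delta a\|_V^2\le\beta\|\delta a\|_B\|u(a)\|_V\|Du(a)\delta a\|_V$, which gives \eqref{FOB} after cancellation and the a priori bound $\|u(a)\|_V\le\alpha^{-1}\|m\|_{V^*}$ from Lemma~\ref{LCE}. For \eqref{SOB} I would test \eqref{SOD} with $v=D^2u(a)(\delta a_1,\delta a_2)$, bound the two right-hand terms by \eqref{Cont}, and insert the first-order estimate \eqref{FOB} for $\|Du(a)\delta a_i\|_V$; the factor of two then appears naturally from the two symmetric terms.

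The main obstacle is the rigorous justification of differentiability rather than the algebra: one must guarantee that $a+t\delta a$ remains in $A$ for small $t$, which is exactly where the interior assumption is used, and that the linearized problems are genuinely well posed, i.e. that \eqref{Coer} continues to hold so each of \eqref{var4} and \eqref{SOD} admits a unique solution to which the difference quotients can converge. Everything else is a direct consequence of the trilinearity of $T$ together with the two structural inequalities \eqref{Cont}--\eqref{Coer} and the a priori bound of Lemma~\ref{LCE}.
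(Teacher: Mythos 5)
There is nothing in the paper to compare your argument against: \cref{d1} is recalled from \cite{GocKha07} and stated without proof (indeed, the later \cref{d1RP} appeals to ``the proof of \cref{d1}'', which is never actually given). Judged on its own merits, your proposal is correct, and both of your routes are sound. The implicit-function-theorem route is the cleanest way to get the claim of \emph{infinite} differentiability: $F(a,u)=T(a,u,\cdot)-m$ is a bounded bilinear map plus a constant, hence smooth, and $\partial_uF(a,u)\,\delta u=T(a,\delta u,\cdot)$ is an isomorphism of $V$ onto $V^*$ by Lax--Milgram under \eqref{Coer}; since a whole ball around $a$ lies in $A$, solutions are unique for all parameters in that ball, so the local IFT branch coincides with $u(\cdot)$ there. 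The difference-quotient route, as you say, simultaneously produces \eqref{var4}, \eqref{SOD}, \eqref{FOB} and \eqref{SOB} with exactly the stated constants, and your testing computations are correct. Two points are worth making explicit if you write this up. First, your remainder estimate combined with \eqref{CE} gives $\|r_t\|_V\le t\,(\beta^2/\alpha^3)\|m\|_{V^*}\|\delta a\|_B^2$, which is quadratic in $\|\delta a\|_B$; together with the linearity and boundedness of $\delta a\mapsto w(\delta a)$ this yields genuine Fr\'echet (not merely directional) differentiability --- a distinction that matters here because \eqref{FOB} and \eqref{SOB} are operator-norm bounds on the Fr\'echet derivatives. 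Second, two passes of the quotient scheme only produce two derivatives; to reach infinite differentiability along this route you need an induction (each derivative solves a coercive problem of the same form whose right-hand side involves only lower-order derivatives, and the analog of \eqref{CE} required at each step, namely Lipschitz continuity of $a\mapsto Du(a)\delta a$ and its higher-order counterparts, follows by the same testing trick), or you can simply invoke your first route for that single claim and reserve the quotient argument for the identities and bounds.
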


In all of the above results, coercivity condition \eqref{Coer} played the most crucial role. It gives the unique solvability of variational problem \eqref{VP1}, proves bound on the parameter-to-solution map, establishes its Lipschitz continuity and infinite differentiability. As another useful consequences of the coercivity, the first-order and the second-order derivatives of the parameter-to-solution maps are the unique solutions of the variational problems \eqref{var4} and \eqref{SOD}. Moreover, the useful bounds \eqref{FOB} and \eqref{SOB} also hold due to the coercivity.

This work aims to study the inverse problem of parameter identification in noncoercive variational problems with perturbed data.  Our main contributions are as follows:
\begin{enumerate}
    \item Assuming that the noncoercive variational problem is solvable,
        we give a derivative characterization for the set-valued parameter-to-solution map by using the first-order and the second-order contingent derivatives. To our knowledge, this is the first use of such tools from variational analysis in the study of inverse problems.
    \item We study the inverse problem by posing optimization problems using the output least-squares and the modified output least-squares functionals for the set-valued parameter to solution map. We regularize the noncoercive variational problem and obtain the single-valued regularized parameter-to-selection map and explore its smoothness. We consider optimization problems using the output least-squares and the modified output least-squares for the regularized variational problem. We prove that the MOLS objective is convex and give a complete convergence analysis showing that the regularized problems approximate the original problem suitably.
    \item To compute the first-order and the second-order derivative of the OLS functional, we give first-order, and second-order adjoint methods in the continuous setting. We provide a discretization scheme and give discrete formulas for the OLS and the MOLS functionals and their gradient and Hessian calculation. As a byproduct of our study, we obtain new insight into the case when the actual trilinear form is coercive, however, for the computations, only its contaminated analog is available which is noncoercive. All the conditions imposed for the convergence
        analysis are satisfied in this case of practical importance.
\end{enumerate}

We organize the contents of this paper into seven sections. \Cref{sec:solution-map} introduces the inverse problem and explores the smoothness of the set-valued parameter-to-solution map. \Cref{sec:framework} investigates the inverse problem by using the output-least squares approach and the modified output least squares approach. \Cref{sec:adjoint} is devoted to the first-order and the second-order adjoint approaches. \Cref{sec:computational} provides a detailed computational framework including the discrete gradient and Hessian formulae. In \cref{sec:experiments}, we report the outcome of some preliminary numerical experiments. The paper concludes with some remarks.

\section{Set-valued Solution Map for a Noncoercive Variational Problem}\label{sec:solution-map}

For convenience, we recall the general setting once again. Let $B$ be a Banach space, let $A\subset B$ be a nonempty, closed, and  convex set. Let $V$ be a Hilbert space continuously imbedded in a Hilbert space $Z$. Let $T:B\times V\times V\rightarrow\mathbb{R}$ be a trilinear form with $T(a,u,v)$ symmetric in $u$, $v$.  Let $m$ be a bounded linear functional on $V$. Assume that $T$ satisfies the continuity assumption \eqref{Cont} and the following positivity condition:
\begin{equation}\label{Pos}
    T(a,u,u)\geq 0,\ \text{for all}\ u\in V,\ a\in A.
\end{equation}
Consider the noncoercive variational problem: Given $a\in A$, find $u=u(a)\in V$ such that
\begin{equation}\label{VP}
    T(a,u,v)=m(v),\ \ \text{for every}\ v\in V.
\end{equation}
Since we do not impose the coercivity condition (see \eqref{Coer}) on $T$, additional conditions are necessary to ensure that \eqref{VP} is solvable. For example, recession analysis can be used to ensure  that \eqref{VP} is solvable but such conditions don't guarantee that the solution is unique (see \cite{Goe96}). Therefore, it is natural to study the behavior of the set-valued parameter-to-solution map.  For a given parameter $a\in A$, by $\mathcal{U}(a)$ we denote the set of all solutions of variational equation \eqref{VP}. In the following, we assume that for each $a\in A$, the set $\mathcal{U}(a)$ is nonempty. The following lemma provides additional information:
\begin{lemma} 
    For any $a\in A$, the set of all solutions $\mathcal{U}(a)$ of \eqref{VP} is closed and convex.
\end{lemma}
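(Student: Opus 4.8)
The plan is to show the two properties separately, working directly from the defining variational equation \eqref{VP} and exploiting the linearity of $T$ in its first two slots together with the positivity condition \eqref{Pos}.

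First I would establish convexity. Fix $a\in A$ and take two solutions $u_1,u_2\in\mathcal{U}(a)$, so that $T(a,u_1,v)=m(v)$ and $T(a,u_2,v)=m(v)$ for every $v\in V$. For $t\in[0,1]$ set $u_t=tu_1+(1-t)u_2$. Since $T(a,\cdot,v)$ is linear in its second argument, one computes
\begin{equation*}
    T(a,u_t,v)=tT(a,u_1,v)+(1-t)T(a,u_2,v)=t\,m(v)+(1-t)\,m(v)=m(v)
\end{equation*}
for every $v\in V$, so $u_t\in\mathcal{U}(a)$. This shows $\mathcal{U}(a)$ is convex, and notably this step uses only the linearity of $T$ in the second slot, not positivity.

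Next I would establish closedness. Let $\{u_n\}\subset\mathcal{U}(a)$ with $u_n\to u$ in $V$; I must show $u\in\mathcal{U}(a)$, i.e. $T(a,u,v)=m(v)$ for all $v\in V$. Fix $v\in V$. For each $n$ we have $T(a,u_n,v)=m(v)$, so it suffices to pass to the limit in the left-hand side. The key is continuity of the map $u\mapsto T(a,u,v)$, which follows from the continuity estimate \eqref{Cont}: by linearity in the second argument,
\begin{equation*}
    |T(a,u_n,v)-T(a,u,v)|=|T(a,u_n-u,v)|\leq\beta\|a\|_B\|u_n-u\|_V\|v\|_V\to0.
\end{equation*}
Hence $T(a,u,v)=\lim_n T(a,u_n,v)=m(v)$, and since $v$ was arbitrary, $u\in\mathcal{U}(a)$.

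I do not expect a genuine obstacle here, since both properties reduce to the bilinearity of $T(a,\cdot,\cdot)$ and the a priori continuity bound \eqref{Cont}; the positivity condition \eqref{Pos} plays no role in either part and is needed only for existence, not for these structural properties. The only point requiring mild care is that the estimate in the closedness argument should be applied slotwise in the fixed argument $a$ and the fixed test function $v$, treating $T(a,\cdot,v)$ as a bounded linear functional on $V$; this is immediate from \eqref{Cont}. I would present convexity first and closedness second, as above.
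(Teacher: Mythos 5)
Your proof is correct and takes essentially the same approach as the paper: convexity follows from linearity of $T(a,\cdot,v)$ in the second slot, and closedness from the continuity of the trilinear form. In fact, your closedness argument via the bound \eqref{Cont} spells out in full the step that the paper dispatches in a single sentence, and your observation that the positivity condition \eqref{Pos} is not needed is accurate.
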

\begin{proof} The proof follows at once from the definition of the set-valued parameter-to-solution map $\mathcal{U}:A\rra V$. Indeed, let $u$ and $w$ be two arbitrary elements in $\mathcal{U}(a)$. Then, for every $v\in V$, we have $T(a,u,v)=m(v)$ and $T(a,w,v)=m(v)$. We take $t\in [0,1]$ and note that for every $v\in V$, we have $T(a,tu,v)=tm(v)$ and $T(a,(1-t)u,v)=(1-t)m(v)$. We combine these equations to note that for every $v\in V$, we have $T(a,tu+(1-t)w,v)=m(v)$. Consequently, $tu+(1-t)w\in \mathcal{U}(a)$ confirming the convexity of $\mathcal{U}(a)$. The set $\mathcal{U}(a)$ is closed due to the continuity of the trilinear form $T$. The proof is complete.
\end{proof}

\bigskip

Our goal is to obtain a derivative characterization for the set-valued parameter-to-solution map. In the literature, a variety of derivative concepts have been employed to differentiate set-valued maps (see \cite{KhaTamZal15}). We will use first-order and second-order contingent derivatives of the parameter-to-solution set-valued map $\mathcal{U}:A\rightrightarrows V$. These derivatives are defined by using the contingent cone and the second-order contingent set which we recall now.
\begin{definition}\label{d2.1}Let $X$ be a normed space, let $S \subset X$ and let $\bar{z}\in
    \cl(S)$ (closure of $S$).
    \begin{enumerate}
        \item The \emph{contingent cone} $C(S,\bar{z})$ of $S $ at $\bar{z}$ is  the set of all $z \in
            X$ such that there are sequences $\{t_{n}\}\subset \mathbb{P}:=\{t\in \mathbb{R}\,|\ t>0\}$ and
            $\{z_{n}\}\subset X$ with $t_{n}\downarrow 0$ and  $z_{n}\to z$ such that $\bar{z}+t_{n}z_{n}\in S$, for every $n\in \mathbb{N}$.
        \item The \emph{second-order contingent set} $C^{2}(S,\bar{z},w)$ of $S $ at
            $\bar{z}\in \cl(S)$ in the direction $w\in X $ is the
            set of all $z \in S $ such that there are a sequence $\{z_{n}\}\subset X$ with $z_{n}\to z$ and a sequence $\{t_n\}\subset \mathbb{P}$ with $t_{n}\downarrow 0$ such that $\bar{z}+t_{n} w+t_n^2 z_{n}/2\in S$, for every $n\in \mathbb{N}$.
    \end{enumerate}
\end{definition}
\begin{remark}\label{re2.1}
    It is known that the contingent cone $C(S,\bar{z})$ is a nonempty closed cone. However, $C^{2}(S,\bar{z},w)$ is only a closed set (possibly empty), non-connected in
    general, and it may be nonempty only if $w\in C(S,\bar{z})$. Details of these concepts can be found in \cite{Bor78,Bor76,KhaTamZal15}.
\end{remark}

Next we collect some notions for set-valued maps. Given normed spaces $X$ and $Y$,
let $F: X \rightrightarrows  Y$ be a set-valued map. The (effective) domain and the graph of $F$ are
defined by $\text{dom} (F): = \{ x \in X |\quad F(x)\not = \emptyset\}$, and $\text{graph} (F):= \{ (x,y)\in
X \times Y |\quad y \in F(x) \}$.

We now introduce first-order and second-order derivatives of set-valued maps.
\begin{definition}\label{d4.1} 
    Let $X$ and $Y$ be normed spaces,  let $F:X\rightrightarrows Y$ be a set-valued map, and let $(\bar{x},\bar{y})\in \graph(F)$. Then the \emph{contingent derivative} of $F$ at $(\bar{x},\bar{y})$ is the set-valued map $DF(\bar{x},\bar{y}):X\rightrightarrows  Y$ given by
    \begin{equation*}
        DF(\bar{x},\bar{y})(x):=\{y\in Y\,|\ (x,y)\in C(\graph(F),(\bar{x},\bar{y}))\}.
    \end{equation*}
    Moreover, the \emph{second-order contingent derivative} of $F$ at $(\bar{x},\bar{y})$ in the
    direction $(\bar{u},\bar{v})$ is the set-valued map $D^{2}F(\bar{x},\bar{y},\bar{u},\bar{v}):X
    \rightrightarrows  Y$ defined by
    \begin{equation*}
        D^{2}F(\bar{x},\bar{y},\bar{u},\bar{v})(x):=\left\{y\in Y
        \,|\ (x,y)\in C^{2}(\graph(F), (\bar{x},\bar{y}),(\bar{u},\bar{v}))\right\}.
    \end{equation*}
\end{definition}

The above derivatives have been used extensively in nonsmooth and variational analysis, viability theory, set-valued optimization and numerous other related disciplines, see \cite{KhaTamZal15}.

\bigskip

We have the following derivative characterization for the parameter-to-solution map:
\begin{theorem}\label{T-FCD} For $\bar{a}\in A$, let $\bar{u}\in \mathcal{U}(\bar{a})$ be a given point. Assume that the first-order contingent derivative $D\mathcal{U}(\bar{a},\bar{u}):B\rra V$ of the set-valued parameter-to-solution map $\mathcal{U}:A\rra V$ at the point  $(\bar{a},\bar{u})\in \text{graph}(\mathcal{U})$ exists. Then for any given direction $\delta a\in B$, any element $\delta u\in D\mathcal{U}(\bar{a},\bar{u})(\delta a)$ satisfies the following variational problem:
    \begin{equation}\label{FCD}T(\bar{a},\delta u,v)=-T(\delta a,\bar{u},v),\quad \text{for every}\ v\in V.
    \end{equation}
\end{theorem}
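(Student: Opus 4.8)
The plan is to unwind the definition of the contingent derivative into a concrete family of variational identities and then pass to the limit. By \Cref{d4.1}, the hypothesis $\delta u\in D\mathcal{U}(\bar a,\bar u)(\delta a)$ means exactly that $(\delta a,\delta u)\in C(\graph(\mathcal{U}),(\bar a,\bar u))$. Applying \Cref{d2.1}(i) to this contingent cone, I obtain sequences $\{t_n\}\subset\mathbb{P}$ with $t_n\downarrow 0$ and $\{(\delta a_n,\delta u_n)\}\subset B\times V$ with $\delta a_n\to\delta a$ in $B$ and $\delta u_n\to\delta u$ in $V$, such that $(\bar a+t_n\delta a_n,\bar u+t_n\delta u_n)\in\graph(\mathcal{U})$ for every $n$. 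By the very definition of $\mathcal{U}$, the latter says that $\bar u+t_n\delta u_n$ solves \eqref{VP} with parameter $\bar a+t_n\delta a_n$, that is,
\[
    T(\bar a+t_n\delta a_n,\;\bar u+t_n\delta u_n,\;v)=m(v)\qquad\text{for all }v\in V.
\]

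Next I would expand the left-hand side using the trilinearity of $T$ into the four terms $T(\bar a,\bar u,v)$, $t_nT(\bar a,\delta u_n,v)$, $t_nT(\delta a_n,\bar u,v)$, and $t_n^2T(\delta a_n,\delta u_n,v)$. Since $\bar u\in\mathcal{U}(\bar a)$ gives $T(\bar a,\bar u,v)=m(v)$, the constant term cancels the right-hand side; dividing the remainder by $t_n>0$ yields, for each $v\in V$,
\[
    T(\bar a,\delta u_n,v)+T(\delta a_n,\bar u,v)+t_n\,T(\delta a_n,\delta u_n,v)=0.
\]
The desired identity \eqref{FCD} will then follow by letting $n\to\infty$ in this relation with $v$ held fixed.

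The only step that requires genuine care is this limit passage. The two linear terms converge to $T(\bar a,\delta u,v)$ and $T(\delta a,\bar u,v)$, respectively, by the continuity of $T$ furnished by \eqref{Cont} together with the convergences $\delta u_n\to\delta u$ and $\delta a_n\to\delta a$. The feature absent from the coercive, single-valued theory is the quadratic remainder $t_n\,T(\delta a_n,\delta u_n,v)$: here I would invoke \eqref{Cont} to bound $|T(\delta a_n,\delta u_n,v)|\le\beta\|\delta a_n\|_B\|\delta u_n\|_V\|v\|_V$, note that the convergent sequences $\{\delta a_n\}$ and $\{\delta u_n\}$ are bounded, and conclude that the prefactor $t_n\downarrow 0$ sends the whole remainder to zero. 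Taking the limit therefore produces $T(\bar a,\delta u,v)+T(\delta a,\bar u,v)=0$, which is \eqref{FCD} after transposing one term. I emphasize that, because coercivity is not available, this argument only shows that every $\delta u$ in the contingent derivative is \emph{a} solution of \eqref{FCD}; it does not assert uniqueness, which is consistent with the set-valued nature of $\mathcal{U}$.
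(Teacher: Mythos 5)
Your proof is correct and follows essentially the same route as the paper's: unwind the contingent cone definition into sequences $t_n\downarrow 0$, $(\delta a_n,\delta u_n)\to(\delta a,\delta u)$ with $\bar u+t_n\delta u_n\in\mathcal{U}(\bar a+t_n\delta a_n)$, expand by trilinearity, cancel $T(\bar a,\bar u,v)=m(v)$, divide by $t_n$, and pass to the limit. Your explicit justification of the vanishing quadratic remainder via the continuity bound \eqref{Cont} is a point the paper leaves implicit, but it is the same argument.
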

\begin{proof} For the given element $(\bar{a},\bar{u})\in \text{graph}(\mathcal{U})$ and the given direction $\delta a$, for any $\delta u\in D\mathcal{U}(\bar{a},\bar{u})(\delta a)$, we have
    \begin{equation*}
        (\delta a,\delta u)\in \text{graph}(D\mathcal{U}(\bar{a},\bar{u}))=C(\text{graph}(\mathcal{U}),(\bar{a},\bar{u})),
    \end{equation*}
    and by the definition of the contingent cone, there are sequences $\{t_n\}\subset \mathbb{P}$ and $\{(a_n,u_n)\}$ with $t_n\to 0$ and $(a_n,u_n)\to (\delta a,\delta u)$ such that
    $(\bar{a}+t_na_n,\bar{u}+t_nu_n)\in \text{graph}(\mathcal{U})$, or equivalently $\bar{u}+t_nu_n\in \mathcal{U}(\bar{a}+t_na_n)$,
    which, by the definition of the map $\mathcal{U}:A\rightrightarrows V$, implies that
    \begin{equation*}
        T(\bar{a}+t_na_n,\bar{u}+t_nu_n,v)=m(v),\quad \text{for every}\ v\in V,
    \end{equation*}
    and after a rearrangement of this variational problem, we obtain
    \begin{equation*}
        T(\bar{a},\bar{u},v)+t_nT(\bar{a},u_n,v)+t_nT(a_n,\bar{u},v)+t_n^2T(a_n,u_n,v)=m(v),\quad \text{for every}\ v\in V.
    \end{equation*}
    The condition $(\bar{a},\bar{u})\in \text{graph}(\mathcal{U})$ implies that $T(\bar{a},\bar{u},v)=m(v)$, for every $v\in V$, and hence
    \begin{equation*}
        T(\bar{a},u_n,v)=-T(a_n,\bar{u},v)-t_nT(a_n,u_n,v),\quad \text{for every}\ v\in V.
    \end{equation*}
    By passing the above equation to the limit $n\to \infty$, we obtain
    \begin{equation*}
        T(\bar{a},\delta u,v)=-T(\delta a,\bar{u},v),\quad \text{for every}\ v\in V,
    \end{equation*}
    and the desired identity \eqref{FCD} is proved. The proof is complete.
\end{proof}
\begin{remark}
    If the trilinear form $T$ satisfies coercivity condition \eqref{Coer}, then for every parameter $\bar{a}\in A$, variational problem \eqref{VP} has a unique solution $\bar{u}=u(\bar{a})$, that is, the map $\bar{a}\to u(\bar{a})$ is well-defined and single-valued. Moreover, for any $a$ in the interior of $A$ and any direction $\delta a$, the Fr\'echet derivative $\delta u=D\bar{u}(\bar{a})(\delta a)$ is the unique solution of the variational problem \eqref{var4} which is entirely comparable to the characterization \eqref{FCD}.
\end{remark}

\bigskip

The following is the characterization of the second-order contingent derivative:
\begin{theorem}\label{T-SCD} 
    For any $\bar{a}\in A$, let $\bar{u}\in \mathcal{U}(\bar{a})$ be a given element. Assume that second-order contingent derivative of the parameter-to-solution set-valued map $\mathcal{U}:A\rra V$ at $(\bar{a},\bar{u})$ in the direction $(\delta a,\delta u)\in \text{graph}(D\mathcal{U}(\bar{a},\bar{u}))$ exists. Then for any given direction $\delta \tilde{a}\in B$, any element $\delta^2 u\in D^2\mathcal{U}(\bar{a},\bar{u},\delta a,\delta u)(\delta \tilde{a})$ satisfies the variational problem:
    \begin{equation}\label{SCD}
        T(\bar{a},\delta^2 u,v)=-2T(\delta a, \delta u,v)-T(\delta \tilde{a},\bar{u},v),\quad \text{for every}\ v\in V.
    \end{equation}
\end{theorem}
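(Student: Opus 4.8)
The plan is to mirror the first-order argument of \Cref{T-FCD}, but now expanding the trilinear form around $\bar{a}$ to second order along a \emph{parabolic} curve dictated by the definition of the second-order contingent set. First I would unwind the hypothesis: for $\delta^2 u\in D^2\mathcal{U}(\bar{a},\bar{u},\delta a,\delta u)(\delta\tilde{a})$, we have $(\delta\tilde{a},\delta^2 u)\in C^2(\graph(\mathcal{U}),(\bar{a},\bar{u}),(\delta a,\delta u))$. By \Cref{d2.1}(ii) this yields sequences $t_n\downarrow 0$ and $(a_n,u_n)\to(\delta\tilde{a},\delta^2 u)$ with
\begin{equation*}
    \bigl(\bar{a}+t_n\delta a+\tfrac{1}{2}t_n^2 a_n,\ \bar{u}+t_n\delta u+\tfrac{1}{2}t_n^2 u_n\bigr)\in\graph(\mathcal{U}),
\end{equation*}
which by the definition of $\mathcal{U}$ means $T\bigl(\bar{a}+t_n\delta a+\tfrac{1}{2}t_n^2 a_n,\ \bar{u}+t_n\delta u+\tfrac{1}{2}t_n^2 u_n,\ v\bigr)=m(v)$ for every $v\in V$.

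Next I would exploit trilinearity to expand this identity fully in powers of $t_n$. The key observation is that the constant term $T(\bar{a},\bar{u},v)$ and the first-order term must both cancel so that only the $t_n^2$-order term survives after division. The constant term cancels against $m(v)$ because $\bar{u}\in\mathcal{U}(\bar{a})$ gives $T(\bar{a},\bar{u},v)=m(v)$. For the order-$t_n$ coefficient, the expansion produces $T(\bar{a},\delta u,v)+T(\delta a,\bar{u},v)$; this vanishes precisely because $(\delta a,\delta u)\in\graph(D\mathcal{U}(\bar{a},\bar{u}))$, so by \Cref{T-FCD} the first-order characterization \eqref{FCD} holds, i.e. $T(\bar{a},\delta u,v)=-T(\delta a,\bar{u},v)$. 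With both lower-order terms gone, I divide the remaining equation by $t_n^2/2$ (equivalently, collect the coefficient of $\tfrac{1}{2}t_n^2$) and isolate the terms carrying $a_n$ and $u_n$ against a remainder of higher order in $t_n$.

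The bookkeeping is the only delicate part: I must track every one of the several trilinear terms arising from the product expansion — those scaling as $t_n^2$ (namely $T(\bar{a},u_n,v)$, $T(a_n,\bar{u},v)$, and $2T(\delta a,\delta u,v)$ from the cross term), and the genuinely higher-order terms scaling as $t_n^3$ and $t_n^4$ (containing mixed products such as $T(\delta a,u_n,v)$, $T(a_n,\delta u,v)$, $T(a_n,u_n,v)$). After dividing by $t_n^2/2$, the higher-order terms retain a factor $t_n$ or $t_n^2$ and hence vanish as $n\to\infty$. Passing to the limit using $a_n\to\delta\tilde{a}$, $u_n\to\delta^2 u$, and the continuity \eqref{Cont} of $T$, I obtain
\begin{equation*}
    T(\bar{a},\delta^2 u,v)+T(\delta\tilde{a},\bar{u},v)+2T(\delta a,\delta u,v)=0,\quad\text{for every}\ v\in V,
\end{equation*}
which rearranges to the claimed identity \eqref{SCD}. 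The main obstacle, such as it is, is purely organizational — correctly identifying which product terms are order $t_n^2$ versus strictly higher order, and ensuring the factor of $2$ on the cross term $T(\delta a,\delta u,v)$ is handled correctly given the symmetry of $T$ in its last two arguments; there is no analytic difficulty beyond the continuity estimate needed to pass the remainder to zero.
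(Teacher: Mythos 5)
Your proposal is correct and follows essentially the same route as the paper's own proof: unwind the second-order contingent set definition to get the parabolic sequence, expand the trilinear form, cancel the constant term via $T(\bar{a},\bar{u},v)=m(v)$, cancel the first-order term via the characterization \eqref{FCD} from \cref{T-FCD}, and pass the surviving second-order terms to the limit using \eqref{Cont}. The only cosmetic difference is that you divide once by $t_n^2/2$ while the paper divides by $t_n$ in two successive steps; the bookkeeping of the $t_n^2$, $t_n^3$, and $t_n^4$ terms, including the factor $2$ on the cross term, matches the paper exactly.
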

\begin{proof} 
    For the given $(\bar{a},\bar{u})\in \text{graph}(\mathcal{U})$ and the given $(\delta a,\delta u)\in \text{graph}(D\mathcal{U}(\bar{a},\bar{u}))$, let $\delta^2 u\in D^2\mathcal{U}(\bar{a},\bar{u},\delta a,\delta u)(\delta \tilde{a})$. Then, we have
    \begin{equation*}
        (\delta \tilde{a},\delta^2 u)\in \text{graph}(D^2\mathcal{U}(\bar{a},\bar{u},\delta a,\delta u))=C^2(\text{graph}(\mathcal{U}),\bar{a},\bar{u},\delta a,\delta u).
    \end{equation*}
    Therefore, there are sequences $\{t_n\}\subset \mathbb{P}$ and $\{(a_n,u_n)\}\in \text{graph}(\mathcal{U})$ with
    $t_n\to 0$, and $(a_n,u_n)\to (\delta \tilde{a},\delta^2 u)$ so that $(\bar{a}+t_n\delta a+\frac12t_n^2 a_n,\bar{u}+\frac12 t_n\delta u+t_n^2u_n)\in \text{graph}(\mathcal{U})$.
    By the definition of the parameter-to-solution map, we have
    \begin{equation*}
        T(\bar{a}+t_n\delta a+\frac12t_n^2 a_n,\bar{u}+t_n\delta u+\frac12t_n^2u_n,v)=m(v),\quad \text{for every}\ v\in V.
    \end{equation*}
    We simplify the above identity as follows
    \begin{multline}
        T(\bar{a},\bar{u},v)+t_nT(\bar{a},\delta u,v)+\frac12t_n^2T(\bar{a},u_n,v)+t_nT(\delta a,\bar{u},v)+t_n^2T(\delta a, \delta u,v)\\
        +\frac12t_n^3T(\delta a,u_n,v)+\frac12t_n^2T(a_n,\bar{u},v)+\frac12t_n^3T(a_n,\delta u,v)+\frac14t_n^4T(a_n,u_n,v)=m(v),
    \end{multline}
    which, first by using the fact that $(\bar{a},\bar{u})\in \text{graph}(\mathcal{U})$, and then by dividing both sides of the resulting identity by $t_n$ confirms that
    \begin{multline*}
        T(\bar{a},\delta u,v)+\frac12t_nT(\bar{a},u_n,v)+T(\delta a,\bar{u},v)+t_nT(\delta a, \delta u,v)\\
        +\frac12t_n^2T(\delta a,u_n,v)+\frac12t_nT(a_n,\bar{u},v)+\frac12t_n^2T(a_n,\delta u,v)+\frac14t_n^3T(a_n,u_n,v)=0,
    \end{multline*}
    We now first use the fact that $(\delta a,\delta u)\in \text{graph}(D\mathcal{U}(\bar{a},\bar{u}))$, and then divide both sides of the resulting identity by $t_n$ to obtain
    \begin{multline*}
        \frac12 T(\bar{a},u_n,v)+T(\delta a, \delta u,v)+\frac12 t_nT(\delta a,u_n,v)\\
        +\frac12 T(a_n,\bar{u},v)+\frac12t_nT(a_n,\delta u,v)+\frac14 t_n^2T(a_n,u_n,v)=0,
    \end{multline*}
    which when passed to the limit $t_n\to 0$, yields
    \begin{equation*}
        T(\bar{a},\delta^2 u,v)=-2T(\delta a, \delta u,v)-T(\delta \tilde{a},\bar{u},v),
    \end{equation*}
    proving \eqref{SCD}. The proof is complete.
\end{proof}

\bigskip

Note that due to the characterization of the first-order contingent derivative of the set-valued map $\mathcal{U}:A\rra V$, variational problem \eqref{SCD} is equivalent to
\begin{equation}\label{SCD1}
    T(\bar{a},\delta^2 u,v)=-2T(\delta a, \delta u,v)+T(\bar{a},\delta \tilde{u},v),
\end{equation}
where $\delta \tilde{u}\in D\mathcal{U}(\bar{a},\bar{u})(\delta \tilde{a})$ and $v\in V$ is arbitrary.

\bigskip

Clearly, if $T$ satisfies condition \eqref{Coer}, then the parameter-to-solution map is single-valued and infinitely differentiable in the interior of the domain. Moreover, for any  $\bar{a}$ in the interior of $A$, and suitable directions 
$(\delta a_1,\delta a_2)$, the second-order derivative $D^2u(\bar{a})(\delta a_1,\delta a_2)$ is the unique solution of \eqref{SOD}. In particular, with $\delta a=\delta a_1=\delta a_2$, we have
\begin{equation}\label{SOD1}
    T(\bar{a},D^2u(\bar{a})(\delta a,\delta a),v)=-2T(\delta a,Du(a)\delta a,v),\ \ \text{for every}\ v\in V.
\end{equation}
We also recall that if a single-valued map $F:X\to Y$ is twice differentiable, then with $DF(x)$ and $D^2F(x)$ as the first-order and the second-order derivatives,  we have (see \cite{War93})
\begin{equation*}
    C^2(\text{graph}(F),(x,F(x),v,DF(x)v))=\{(y,DF(x)y+D^2F(x)(v,v)),\ y\in X\}.
\end{equation*}
Consequently, by taking $\delta a=\delta \tilde{a}$, we have
\begin{equation*}
    \text{graph}(D^2\mathcal{U}(\bar{a},\bar{u},\delta a,\delta u))=\{(\delta a, D\bar{u}(\bar{a})\delta a+D^2\bar{u}(\bar{a}) (\delta a,\delta a)),\ \delta a\in B\}
\end{equation*}
and, as a result, under \eqref{Coer}, the derivative formula yields
\begin{equation*}
    T(\bar{a},D\bar{u}(\bar{a})\delta a+D^2\bar{u}(\bar{a})(\delta a,\delta a),v)=-2T(\delta a, D\bar{u}(\bar{a})(\delta a),v)+T(\bar{a},D\bar{u}(\bar{a})\delta a,v),\ \text{for all}\ v\in V,
\end{equation*}
implying that
\begin{equation*}
    T(\bar{a},D^2\bar{u}(\bar{a}) (\delta a,\delta a),v)=-2T(\delta a, D\bar{u}(\delta)(\delta a),v),\quad \text{for every}\ v\in V,
\end{equation*}
which is in compliance with the second-order formula \eqref{SOD1}.
\begin{remark}The results given above only offer characterizations of the first-order and the second-order contingent derivatives under the critical assumption that these derivatives exist. This is a natural step as we have not identified conditions under which the variational problem is solvable. A possible extension of these results is singling out conditions ensuring the existence of solutions and then using them to verify the contingent differentiability. 
\end{remark}

\section{Recasting the Inverse Problem in an Optimization Framework}\label{sec:framework}

\subsection{The Output Least-Squares Approach}\label{sec:framework:ols}

Let $\mathcal{U}:A\rra V$ be the set-valued parameter-to-solution map which assigns to each $a\in A$, the set of all solutions $\mathcal{U}(a)$ of the noncoercive variational problem \eqref{VP}.  We define the set-valued output least-squares map $\widehat{J}:A\rra \mathbb{R}$ which connects to each $a\in A$, the following set
\begin{equation*}
    \widehat{J}(a):=\left\{\|u(a)-z\|^2_Z\mid\ u(a)\in \mathcal{U}(a)\right\},
\end{equation*}
where $z\in Z$ is the measured data.

Using the above set-valued map (and a slight abuse of the notation), we pose the following OLS-based optimization problem
\begin{equation}\label{MinOLS}  \min_{a\in A}\widehat{J}(a).
\end{equation}
The philosophy of the OLS approach is  to minimize the gap between the computed solutions $u(a)\in \mathcal{U}(a)$ of \eqref{VP}  and the measured data $z\in Z$.

An element $\bar{a}\in A$ is called a minimizer of \eqref{MinOLS}, if there exists $u(\bar{a})\in \mathcal{U}(\bar{a})$ such that
\begin{equation}\label{DefMin}
    \|u(\bar{a})-z\|^2_Z\leq \|u(a)-z\|^2_Z,\quad \text{for every}\ u(a)\in \mathcal{U}(a),\ \text{for every}\ a\in A. 
\end{equation}
To emphasize the role of $u(\bar{a})$, we sometimes say that $(\bar{a},u(\bar{a}))\in \text{graph}(\mathcal{U})$ is a minimizer.

\bigskip

One of our goals is to approximate \eqref{MinOLS} by a sequence of solutions of optimization problems for which the entire data set of the constraint variational problem is noisy in the sense described below. Let $\{\epsilon_n\}$, $\{\tau_n\}$, $\{\kappa_n\}$, $\{\delta_n\}$, and $\{\nu_n\}$ be sequences of positive reals.
Let $\ell\in V^*$ be a given element.  For each $n\in \mathbb{N}$, let $m_{\nu_n}\in V^*$ and $\ell_{\delta_n}\in V^*$ be given elements, and let $z_{\delta_n}\in  Z$ be the contaminated data such that the following inequalities hold:
\begin{subequations}\label{Noise}
    \begin{align}
        \|z_{\delta_n}-z\|_Z&\leq \delta_n,\label{Z-Error}\\
        \|m_{\nu_n}-m\|_{V^*}&\leq \nu_n,\label{M-Error}\\
        \|\ell_{\delta_n}-\ell\|_{V^*}&\leq \delta_n.\label{L-Error}
    \end{align}
\end{subequations}
Furthermore, for each $n\in \mathbb{N}$, let $T_{\tau_n}:B\times V\times V\to \mathbb{R}$ be a trilinear form such that
\begin{subequations}\label{TNoise}
    \begin{align}
        T_{\tau_n}(a,u,u)&\ge  0,\ \text{for all}\ u\in V,\ a\in A. \label{Pos1}\\
        \left|T_{\tau_n}(a,u,v)-T(a,u,v)\right|&\leq \tau_n\|a\|_B\|u\|_V\|v\|_V,\ \ \text{for all}\ u,v\in V,\ a\in B.\label{TError}
    \end{align}
\end{subequations}
Moreover, as $n\to \infty$, the sequences $\{\epsilon_n\}$, $\{\tau_n\}$,  $\{\kappa_n\}$, $\{\delta_n\}$, and $\{\nu_n\}$ satisfy
\begin{equation}\label{SeqC}
    \left\{\tau_n,\epsilon_n,\kappa_n,\nu_n,\delta_n,\frac{\tau_n}{\epsilon_n},\frac{\delta_n}{\epsilon_n},\frac{\nu_n}{\epsilon_n}\right\}\to 0.
\end{equation}
Finally, let $S:V\times V\to \mathbb{R}$ be a symmetric bilinear form such that there are constants $\alpha_0 >0$ and $\beta_0 >0$ satisfying the following continuity and coercivity conditions
\begin{subequations}\label{SCond}
    \begin{align}
        S(u,v)&\le \beta_0\|u\|_{V}\|v\|_{V},\ \ \text{for all}\ u,v\in V,\label{Cont0}\\
        S(u,u)&\ge\alpha_0\|u\|_{V}^2,\ \text{for all}\ u\in V. \label{Coer0}
    \end{align}
\end{subequations}

With the above preparation, for each $n\in \mathbb{N}$, we now consider the following regularized variational problem: Given $a\in A$, find $u_{\varsigma_n}(a)\in V$ such that
\begin{equation}\label{RVP}
    T_{\tau_n}(a,u_{\varsigma_n}(a),v)+\epsilon_n S(u_{\varsigma_n}(a),v)=m_{\nu_n}(v)+\epsilon_n\ell_{\delta_n}(v),\ \ \text{for every}\ v\in V.
\end{equation}
where $\epsilon_n>0$ is the regularization parameter and for simplicity, we set $\varsigma_n:=(\epsilon_n,\tau_n,\nu_n,\delta_n)$.

In view of the above conditions, for a fixed $n\in \mathbb{N}$, and for every $a\in A$, \eqref{RVP} has a unique solution  $u_{\varsigma_n}(a)$. Therefore, the regularized parameter-to-solution map $a\to u_{\varsigma_n}(a)$ is well-defined and single-valued.

The next result embarks on the smoothness of the regularized parameter-to-solution map:
\begin{theorem}\label{d1RP} For any $n\in \mathbb{N}$ and any parameter $a$ in the interior of $A$, the regularized  parameter-to-solution map $a\to u_{\varsigma_n}(a)$ is infinitely
    differentiable at $a$. Moreover, given $u_{\varsigma_n}(a)$, the first-order derivative
    $Du_{\varsigma_n}(a)\delta a$ in the direction $\delta a\in B$ is the unique solution of the variational equation
    \begin{equation}\label{var4RP}
        T_{\tau_n}(a,Du_{\varsigma_n}(a)\delta a,v)+\epsilon_n S(Du_{\varsigma_n}(a)\delta a,v) =-T_{\tau_n}(\delta a,u_{\varsigma_n},v),\ \text{for every}\ v\in V,
    \end{equation}
    and the second-order derivative $D^2u_{\varsigma_n}(a)(\delta a_1,\delta a_2)$ in the direction $(\delta a_1,\delta a_2) \in B\times B$ is the unique solution of the
    variational equation
    \begin{multline}\label{SODRP}
        T_{\tau_n}(a,D^2u_{\varsigma_n}(a)(\delta a_1,\delta a_2),v)+\epsilon_n S(D^2u_{\varsigma_n}(a)(\delta a_1,\delta a_2),v)\\
        =-T_{\tau_n}(\delta a_2,Du_{\varsigma_n}(a)\delta a_1,v)-T_{\tau_n}(\delta a_1,Du_{\varsigma_n}(a)\delta a_2,v),\ \text{for every}\ v\in V.
    \end{multline}
\end{theorem}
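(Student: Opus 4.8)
The plan is to observe that, although the trilinear form $T_{\tau_n}$ is only positive semidefinite, the regularizing term $\epsilon_n S$ restores coercivity, after which the statement follows from the implicit function theorem together with implicit differentiation. For fixed $n$ and fixed $a\in A$ I would introduce the bilinear form $b_a(u,v):=T_{\tau_n}(a,u,v)+\epsilon_n S(u,v)$ on $V\times V$. The positivity \eqref{Pos1} of $T_{\tau_n}$ combined with the coercivity \eqref{Coer0} of $S$ gives $b_a(u,u)\ge \epsilon_n\alpha_0\|u\|_V^2$, while \eqref{TError} and \eqref{Cont} yield $|T_{\tau_n}(a,u,v)|\le(\beta+\tau_n)\|a\|_B\|u\|_V\|v\|_V$, which with \eqref{Cont0} shows that $b_a$ is bounded. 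Thus $b_a$ is bounded and coercive, so by the Lax--Milgram theorem the associated operator is an isomorphism of $V$ onto $V^*$; in particular \eqref{RVP}, and later \eqref{var4RP} and \eqref{SODRP} (which share the same left-hand side $b_a$ with a bounded right-hand side), are each uniquely solvable.

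For the smoothness I would set $\hat m:=m_{\nu_n}+\epsilon_n\ell_{\delta_n}\in V^*$ and define $F:B\times V\to V^*$ by $F(a,u):=b_a(u,\cdot)-\hat m$, so that $u_{\varsigma_n}(a)$ is characterized by $F(a,u_{\varsigma_n}(a))=0$. Since $(a,u)\mapsto T_{\tau_n}(a,u,\cdot)$ is a bounded bilinear map into $V^*$, $u\mapsto\epsilon_n S(u,\cdot)$ is bounded linear, and $-\hat m$ is constant, $F$ is a sum of smooth maps and hence $F\in C^\infty(B\times V,V^*)$. Its partial derivative $\partial_u F(a,u)\,\delta u=b_a(\delta u,\cdot)$ is precisely the isomorphism from the first step. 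Applying the implicit function theorem at each interior point $a$, and invoking the Lax--Milgram uniqueness to identify the local implicit solution with $u_{\varsigma_n}$, shows that $u_{\varsigma_n}$ is infinitely differentiable at $a$.

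The derivative formulas then follow by implicit differentiation of $F(a,u_{\varsigma_n}(a))=0$. Differentiating once in the direction $\delta a$ gives $\partial_a F(a,u)\,\delta a+\partial_u F(a,u)\,Du_{\varsigma_n}(a)\delta a=0$ in $V^*$; as only $T_{\tau_n}$ depends on $a$, one has $\partial_a F(a,u)\,\delta a=T_{\tau_n}(\delta a,u,\cdot)$, and testing against $v\in V$ reproduces \eqref{var4RP}. Regarding this first-order identity as an identity in $a$ with $\delta a_1$ fixed and differentiating in the direction $\delta a_2$, the product rule applied to $T_{\tau_n}(a,Du_{\varsigma_n}(a)\delta a_1,v)$ produces both $T_{\tau_n}(\delta a_2,Du_{\varsigma_n}(a)\delta a_1,v)$ and $T_{\tau_n}(a,D^2u_{\varsigma_n}(a)(\delta a_1,\delta a_2),v)$, while the $\epsilon_n S$ and $\hat m$ terms are $a$-independent; rearranging yields exactly \eqref{SODRP}.

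The only genuine content is the first step: the restoration of coercivity of $b_a$ by the regularization, which is precisely what \eqref{Pos1} and \eqref{Coer0} are designed to provide and which the bare trilinear form $T_{\tau_n}$ lacks. Once coercivity is in hand, the remaining steps are routine applications of Lax--Milgram and the implicit function theorem, the only care needed being the bookkeeping of which terms depend on $a$ during differentiation, so that the $a$-independent form $S$ correctly drops out of the right-hand sides. An equivalent, more self-contained alternative to the implicit function theorem would be to estimate difference quotients directly using the uniform coercivity constant $\epsilon_n\alpha_0$, mirroring the proof of Lemma~\ref{d1} in the coercive setting.
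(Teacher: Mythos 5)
Your proposal is correct and takes essentially the same approach as the paper: the paper's entire proof is the remark that the ellipticity of $T_{\tau_n}+\epsilon_n S$ (your coercivity bound $b_a(u,u)\ge\epsilon_n\alpha_0\|u\|_V^2$) restores the coercive setting of \cref{d1}, whose argument then carries over verbatim. Your Lax--Milgram plus implicit-function-theorem write-up, with implicit differentiation giving \eqref{var4RP} and \eqref{SODRP}, is a correct self-contained implementation of exactly that deferred argument.
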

\begin{proof} 
    The proof follows by similar arguments that were used in the proof of  \cref{d1}. The crucial role in the proof  is played by the ellipticity of $T_{\tau_n}+\epsilon_n S$.
\end{proof}

\bigskip

Before any further advancement, in the following result we give necessary conditions ensuring that the derivative of the regularized parameter-to-solution map remains bounded.
\begin{theorem}\label{T-FCDB} 
    For a parameter $\bar{a}$ in the interior of $A$, let $\bar{u}\in \mathcal{U}(\bar{a})$ be a given point. Assume that the first-order contingent derivative of the set-valued parameter-to-solution map $\mathcal{U}:A\rra V$ at the point  $(\bar{a},\bar{u})\in \text{graph}(\mathcal{U})$ exists. If
    \begin{equation}\label{Disc}\|u_{\varsigma_n}(\bar{a})-\bar{u}\|_V=O(\epsilon_n),
    \end{equation} where $u_{\varsigma_n}(\bar{a})$ is the regularized solution of \eqref{RVP} for parameter $\bar{a}$, then the first-order derivative $Du_{\varsigma_n}(\bar{a})\delta a$ of $u_{\varsigma_n}(\bar{a})$ in any direction $\delta a\in B$ is uniformly bounded.
\end{theorem}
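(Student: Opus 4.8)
The plan is to estimate $w_n := Du_{\varsigma_n}(\bar a)\delta a$ directly from its defining variational equation \eqref{var4RP} with $a=\bar a$,
\[
    T_{\tau_n}(\bar a, w_n, v) + \epsilon_n S(w_n, v) = -T_{\tau_n}(\delta a, u_{\varsigma_n}(\bar a), v), \qquad \forall v\in V.
\]
The naive route — testing with $v=w_n$, using $T_{\tau_n}(\bar a,w_n,w_n)\ge 0$ from \eqref{Pos1} and $S(w_n,w_n)\ge\alpha_0\|w_n\|_V^2$ from \eqref{Coer0} — only yields $\|w_n\|_V\le(\beta+\tau_n)\|\delta a\|_B\|u_{\varsigma_n}(\bar a)\|_V/(\epsilon_n\alpha_0)$, which blows up like $\epsilon_n^{-1}$ because $\|u_{\varsigma_n}(\bar a)\|_V$ stays bounded. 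A uniform bound can therefore only arise from cancellation, and this is exactly where hypothesis \eqref{Disc} and the contingent-derivative structure enter.

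First I would fix the direction $\delta a$ and select an element $\delta u\in D\mathcal{U}(\bar a,\bar u)(\delta a)$ furnished by the standing assumption; by \cref{T-FCD} it satisfies $T(\bar a,\delta u,v)+T(\delta a,\bar u,v)=0$ for all $v\in V$. The idea is to compare $w_n$ with this fixed $\delta u$ by studying $e_n:=w_n-\delta u$. Writing $T_{\tau_n}(\bar a,w_n,v)=T_{\tau_n}(\bar a,e_n,v)+T_{\tau_n}(\bar a,\delta u,v)$ and splitting $u_{\varsigma_n}(\bar a)=\bar u+(u_{\varsigma_n}(\bar a)-\bar u)$ on the right-hand side, the leading-order terms $-T(\delta a,\bar u,v)-T(\bar a,\delta u,v)$ cancel exactly by the contingent-derivative identity. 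What survives is a residual functional $R_n$ collecting the two perturbation errors $(T_{\tau_n}-T)(\delta a,\bar u,v)$ and $(T_{\tau_n}-T)(\bar a,\delta u,v)$, each of size $O(\tau_n)$ by \eqref{TError}, together with the term $T_{\tau_n}(\delta a,u_{\varsigma_n}(\bar a)-\bar u,v)$, of size $O(\epsilon_n)$ by continuity and \eqref{Disc}. Thus $|R_n(v)|\le\rho_n\|v\|_V$ with $\rho_n=O(\tau_n)+O(\epsilon_n)$, and the transformed equation reads $T_{\tau_n}(\bar a,e_n,v)+\epsilon_n S(w_n,v)=R_n(v)$.

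Next I would test this identity with $v=e_n$. Using $T_{\tau_n}(\bar a,e_n,e_n)\ge 0$, decomposing $\epsilon_n S(w_n,e_n)=\epsilon_n S(e_n,e_n)+\epsilon_n S(\delta u,e_n)$, and applying \eqref{Coer0} to the first piece and \eqref{Cont0} to the second, I obtain
\[
    \epsilon_n\alpha_0\|e_n\|_V^2 \le \bigl(\rho_n+\epsilon_n\beta_0\|\delta u\|_V\bigr)\|e_n\|_V,
\]
so that $\|e_n\|_V\le \rho_n/(\epsilon_n\alpha_0)+(\beta_0/\alpha_0)\|\delta u\|_V$. Since $\rho_n/\epsilon_n=O(\tau_n/\epsilon_n)+O\bigl(\|u_{\varsigma_n}(\bar a)-\bar u\|_V/\epsilon_n\bigr)$, the convergence $\tau_n/\epsilon_n\to 0$ from \eqref{SeqC} and the discrepancy hypothesis \eqref{Disc} make this quotient bounded uniformly in $n$. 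Hence $\{\|e_n\|_V\}$ is bounded, and as $\delta u$ is fixed, $\|w_n\|_V\le\|e_n\|_V+\|\delta u\|_V$ is uniformly bounded, which is the claim.

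The main obstacle is conceptual rather than computational: because $T(\bar a,\cdot,\cdot)$ is only positive semidefinite one cannot invert the limiting operator, and the regularized operator $T_{\tau_n}(\bar a,\cdot,\cdot)+\epsilon_n S$ has inverse norm of order $\epsilon_n^{-1}$. The crux is to recognize that, once the exact contingent-derivative equation is subtracted, the source term is itself of order $\max\{\tau_n,\epsilon_n\}$ rather than $O(1)$, so the factors $\epsilon_n^{-1}$ and $\rho_n$ balance. Everything hinges on the admissibility of the comparison element $\delta u$ and on the two smallness conditions $\tau_n/\epsilon_n\to 0$ and $\|u_{\varsigma_n}(\bar a)-\bar u\|_V=O(\epsilon_n)$ acting in tandem.
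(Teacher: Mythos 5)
Your proof is correct, and it shares the paper's overall strategy: fix $\delta u\in D\mathcal{U}(\bar a,\bar u)(\delta a)$ via \cref{T-FCD}, subtract its variational identity from the derivative equation of \cref{d1RP}, test with the error $e_n=Du_{\varsigma_n}(\bar a)\delta a-\delta u$, and balance the residual against the $\epsilon_n$-coercivity supplied by $S$, using \eqref{Disc} and \eqref{SeqC} to keep the residual of order $\epsilon_n$. Where you genuinely diverge is in the bookkeeping of the perturbation error, and your arrangement is tidier. You keep the perturbed form $T_{\tau_n}(\bar a,e_n,v)$ on the error term and route the discrepancy $T_{\tau_n}-T$ onto the \emph{fixed} elements $\bar u$ and $\delta u$, so your residual bound $\rho_n=O(\tau_n)+O(\epsilon_n)$ involves no unknown quantities, and you discard the quadratic term using the positivity \eqref{Pos1} of $T_{\tau_n}$. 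The paper instead keeps the unperturbed $T(\bar a,e_n,v)$ and invokes the positivity \eqref{Pos} of $T$, which forces the term $T(\bar a,Du_{\varsigma_n}(\bar a)\delta a,v)-T_{\tau_n}(\bar a,Du_{\varsigma_n}(\bar a)\delta a,v)$ --- of size $\tau_n\|\bar a\|_B\|Du_{\varsigma_n}(\bar a)\delta a\|_V$, i.e., involving the very quantity being bounded --- into the right-hand side; this self-referential term must then be absorbed into the left, producing the factor $\left(1-\frac{\tau_n}{\epsilon_n}\|\bar a\|_B\right)$ and the restriction to sufficiently large $n$. Your version needs no absorption step: the estimate $\|e_n\|_V\le \rho_n/(\epsilon_n\alpha_0)+(\beta_0/\alpha_0)\|\delta u\|_V$ holds for every $n$, and it only requires $\tau_n/\epsilon_n$ to remain bounded rather than to tend to zero. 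Both arguments consume the same hypotheses (\eqref{Disc}, \eqref{TError}, \eqref{SCond}, \eqref{SeqC}); yours spends them slightly more efficiently and yields a marginally stronger conclusion.
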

\begin{proof} 
    From \cref{T-FCD}, for any $\delta a\in B$, and any  $\delta u\in D\mathcal{U}(\bar{a},\bar{u})(\delta a)$, we have
    \begin{equation}\label{DF1}
        T(\bar{a},\delta u,v)=-T(\delta a,\bar{u},v),\quad \text{for every}\ v\in V.
    \end{equation}
    Furthermore, due to \cref{d1RP}, we also have
    \begin{equation}\label{DF}
        T_{\tau_n}(\bar{a},Du_{\varsigma_n}(\bar{a})\delta a,v)+\epsilon_n S(Du_{\varsigma_n}(\bar{a})\delta a,v) =-T_{\tau_n}(\delta a,u_{\varsigma_n}(\bar{a}),v),\ \text{for every}\ v\in V.
    \end{equation}
    We subtract \eqref{DF1} from \eqref{DF} and rearrange the resulting equation to obtain
    \begin{multline*}
        T(\bar{a},Du_{\varsigma_n}(\bar{a})\delta a-\delta u,v)+\epsilon_n S(Du_{\varsigma_n}(\bar{a})\delta a,v)
        =T(\delta a,\bar{u}-u_{\varsigma_n}(\bar{a}),v)+T(\delta a,u_{\varsigma_n}(\bar{a}),v)\\
        -T_{\tau_n}(\delta a,u_{\varsigma_n}(\bar{a}),v)+T(\bar{a},Du_{\varsigma_n}(\bar{a})\delta a,v )-T_{\tau_n}(\bar{a},Du_{\varsigma_n}(\bar{a})\delta a,v ).
    \end{multline*}
    By setting $v=Du_{\varsigma_n}(\bar{a})\delta a-\delta u$ and using the positivity of $T$, we obtain
    \begin{equation*}
        \begin{aligned}
            \epsilon_n\|Du_{\varsigma_n}&(\bar{a})\delta a-\delta u\|_V^2\\
                                        &\leq T(\bar{a},Du_{\varsigma_n}(\bar{a})\delta a-\delta u,Du_{\varsigma_n}(\bar{a})\delta a-\delta u)+\epsilon_n S(Du_{\varsigma_n}(\bar{a})\delta a-\delta u,Du_{\varsigma_n}(\bar{a})\delta a-\delta u)\\
                                        &=T(\delta a,\bar{u}-u_{\varsigma_n}(\bar{a}),Du_{\varsigma_n}(\bar{a})\delta a-\delta u)+T(\delta a,u_{\varsigma_n}(\bar{a}),Du_{\varsigma_n}(\bar{a})\delta a-\delta u)\\
            \MoveEqLeft[-1]-T_{\tau_n}(\delta a,u_{\varsigma_n}(\bar{a}),Du_{\varsigma_n}(\bar{a})\delta a-\delta u)+T(\bar{a},Du_{\varsigma_n}(\bar{a})\delta a,Du_{\varsigma_n}(\bar{a})\delta a-\delta u )\\
            \MoveEqLeft[-1]-T_{\tau_n}(\bar{a},Du_{\varsigma_n}(\bar{a})\delta a,Du_{\varsigma_n}(\bar{a})\delta a-\delta u )-\epsilon_n S(\delta u,Du_{\varsigma_n}(\bar{a})\delta a-\delta u),
        \end{aligned}
    \end{equation*}
    which, due to the properties of $T$ and $S$, implies that
    \begin{multline*}
        \epsilon_n\|Du_{\varsigma_n}(\bar{a})\delta a-\delta u\|_V\leq \beta \|\delta a\|_B\|\bar{u}-u_{\varsigma_n}(\bar{a})\|_V+\tau_n\|\delta a\|_B\|u_{\varsigma_n}(\bar{a})\|_V\\
        +\tau_n\|\bar{a}\|_B\|Du_{\varsigma_n}(\bar{a})\delta a\|_V+\beta_0\epsilon_n\|\delta u\|,
    \end{multline*}
    and hence
    \begin{multline*}
        \left(1-\frac{\tau_n}{\epsilon_n}\|\bar{a}\|_B\right)\|Du_{\varsigma_n}(\bar{a})\delta a-\delta u\|_V\leq \left[\beta+\tau_n\right]\|\delta a\|_B\frac{\|u_{\varsigma_n}(\bar{a})-\bar{u}\|}{\epsilon_n}+\frac{\tau_n}{\epsilon_n}\|\delta a\|_B\|\bar{u}\|\\
        +\beta_0\|\delta u\|+\frac{\tau_n}{\epsilon_n}\|\bar{a}\|_B\|\delta u\|_V.
    \end{multline*}
    In view of \eqref{SeqC} and the assumption that $\|u_{\varsigma_n}(\bar{a})-\bar{u}\|=O(\epsilon_n)$, it follows that there is a constant $c>0$ such that
    \begin{equation*}
        \left(1-\frac{\tau_n}{\epsilon_n}\|\bar{a}\|_B\right)\|Du_{\varsigma_n}(\bar{a})\delta a-\delta u\|_V\leq c,
    \end{equation*}
    and since $\frac{\tau_n}{\epsilon_n}\to 0$ as $n\to \infty$, for sufficiently large $n\in \mathbb{N}$, we have $\left(1-\frac{\tau_n}{\epsilon_n}\|\bar{a}\|_B\right)>0$ and the boundedness of
    $\|Du_{\varsigma_n}(\bar{a})\delta a-\delta u\|_V$ follows. The proof is complete.
\end{proof}
\begin{remark}
    The fundamental idea of the elliptic regularization for variational problems is to combine the variational problem with the regularized analog to ensure that the regularized solutions remain bounded (see \cite{KhaTamZal15b}). Then a subsequence can be extracted and shown to converge weakly  to a solution of the variational problem, ensuring its solvability. In the above result, we use this idea to prove the boundedness of the derivatives. In the present context, the role of the original variational problem is played by the derivative characterization involving the first-order contingent derivative. As a consequence, suitable conditions ensuring the boundedness of the derivatives of the regularized parameter-to-solution map can be used to show the contingent differentiability of the set-valued parameter-to-solution map. We also note that if the original trilinear map is elliptic, then \eqref{Disc} holds.
\end{remark}

\bigskip

To incorporate regularization in the ill-posed inverse problem, we assume:
\begin{enumerate}
    \item The Banach space $B$  is continuously embedded in a Banach space $L$. There is another Banach space $\widehat{B}$ that is compactly embedded in $L$. The set $A$ is a subset of $B\cap \widehat{B}$, closed and bounded in $B$ and also closed in $L$.
    \item $R:\widehat{B}\to \mathbb{R}$ is positive, convex, and  lower-semicontinuous in $\|\cdot\|_L$ such that
        \begin{equation}\label{RegC}
            R(a)\geq \tau_1\|a\|_{\widehat{B}}-\tau_2, \quad \text{for every}\ a\in A,\quad \text{for some}\ \tau_1>0,\ \tau_2>0.
        \end{equation}
    \item For any $\{b_{k}\}\subset B$ with  $b_{k}\rightarrow 0$
        in $L$, any bounded  $\{u_{k}\}\subset V$, and fixed $v\in V$, we have
        \begin{equation}\label{TConC}
            T(b _{k},u_{k},v)\rightarrow 0.
        \end{equation}
\end{enumerate}

The above framework is inspired by the use of total variation regularization in the identification of discontinuous coefficients.  Recall that the total variation of
$f\in L^1(\Omega)$ reads
\begin{equation*}
    \TV(f)=\sup\left\{\int_{\Omega}f\,(\div g)\,:\,g\in \left(C^1_0(\Omega)\right)^N,\
    |g(x)|\le 1\text{ for all } x\in\Omega\right\}
\end{equation*}
where $|\cdot|$ is the Euclidean norm.  Clearly, if $f\in W^{1,1}(\Omega)$, then $\TV(f)=\int_{\Omega}|\grad f|$.

If $f\in L^1(\Omega)$ satisfies $\TV(f)<\infty$, then $f$ is said to have
bounded variation, and $\BV(\Omega)$ is defined by
$\BV(\Omega)=\left\{f\in L^1(\Omega)\,:\,\TV(f)<\infty\right\}$ with norm $\|f\|_{\text{\tiny{BV}}(\Omega)}=\|f\|_{L^1(\Omega)}+\TV(f)$.
The functional $\TV(\cdot)$ is a seminorm on $\BV(\Omega)$ and is often called
the BV-seminorm, see \cite{NasSch98}.

We set $B=L^{\infty}(\Omega)$, $L=L^{1}(\Omega)$,
$\widehat{B}=\BV(\Omega)$, and $R(a)=TV(a)$, and define
\begin{equation}\label{SetA}
    A=\{a\in L^{\infty}|\ 0<c_1\leq a(x)\leq c_2,\ a.e.\ \text{in}\ \Omega,\ \TV(a)\leq c_3<\infty \},
\end{equation}
where $c_1,c_2$, and $c_3$ are positive constants. Clearly, $ A$ is bounded in
$\|\cdot\|_{\widehat{B}}$ and compact in $L$. It is known that $L^{\infty}(\Omega)$ is continuously embedded in $L^1(\Omega)$, $\BV(\Omega)$ is compactly embedded in $L^1(\Omega)$, and $TV(\cdot)$ is convex and lower-semicontinuous in $L^1(\Omega)$-norm. Thus in this setting, assumptions  1 and 2 are satisfied.

\begin{remark}\label{SR}
    The regularization framework devised above simplifies if we assume that the set $A$ belongs to a Hilbert space $H$ that is compactly embedded in the space $B$. An example for this setting is $B=L^{\infty}(\Omega)$ and $H=H^2(\Omega)$, for a suitable domain $\Omega$.
\end{remark}

\bigskip

Our objective is to approximate \eqref{MinOLS} by the following family of regularized optimization problems: For $n\in \mathbb{N}$, find $a_{\varsigma_n}\in A$ by solving
\begin{equation}\label{RMinOLS} 
    \min_{a\in A}\widehat{J}_{\kappa_n}(a):=\frac12\|u_{\varsigma_n}(a)-z_{\delta_n}\|^2_Z+\kappa_n R(a),
\end{equation}
where $u_{\varsigma_n}(a)$ is the unique solution of \eqref{RVP}, $\kappa_n>0$, and $R$ is the regularizer defined above.

\bigskip

The following main result of this section shows that \eqref{RMinOLS} approximates \eqref{MinOLS}:
\begin{theorem}\label{ExisConT-ROLS}
    Assume that the following conditions hold:
    \begin{enumerate}
        \item The set  $A$ is bounded in $\widehat{B}$, the image set $\mathcal{U}(A)$ is a  bounded set, and for each $a\in A$,  the solution set $\mathcal{U}(a)$ is nonempty.
        \item For $a\in A$, either $\mathcal{U}(a)$ is a singleton, or $Z=V$, $\ell_{\delta_n}(v)=\langle z_{\delta_n},v\rangle_Z$ and $S(u,v)=\langle u,v\rangle_Z$.
    \end{enumerate}
    Then optimization problem \eqref{MinOLS} has a solution, and for each $n\in \mathbb{N}$, optimization problem \eqref{RMinOLS} has a solution $a_{\varsigma_n}$. Moreover, there is a subsequence $\{a_{\varsigma_n}\}$ converging in $\|\cdot\|_L$ to a solution of \eqref{MinOLS}. Finally, for any solution $a_{\varsigma_n}$ of \eqref{MinOLS}, there is  a unique $p_{\varsigma_n}\in V$ such that
    \begin{align}
        T_{\tau_n}(a_{\varsigma_n},p_{\varsigma_n},v)+
        \epsilon_n S(p_{\varsigma_n},v)&=\langle z-u_{\varsigma_n}(a_{\varsigma_n}),v\rangle_Z,\quad \text{for every}\ v\in V,\label{OCa}\\
        T_{\tau_n}(a-a_{\varsigma_n},u_{\varsigma_n}(a_{\varsigma_n}),p_{\varsigma_n})&\geq \kappa_n (R(a_{\varsigma_n})-R(a)),\quad
        \text{for every } a\in A.\label{OCb}
    \end{align}
\end{theorem}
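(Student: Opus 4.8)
The plan is to settle the four assertions — existence for \eqref{MinOLS}, existence for each \eqref{RMinOLS}, subsequential convergence, and the optimality system — by the direct method together with an elliptic‑regularization comparison estimate and an adjoint computation. For a minimizer of \eqref{MinOLS} I would run the direct method on $\text{graph}(\mathcal U)$: take a minimizing sequence $(a_j,u_j)$ with $u_j\in\mathcal U(a_j)$; since $A$ is bounded in $\widehat B$, compactly embedded in $L$, and closed in $L$, a subsequence satisfies $a_j\to\bar a\in A$ in $\|\cdot\|_L$, and since $\mathcal U(A)$ is bounded in $V$, after a further extraction $u_j\rha\bar u$ in $V$. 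Writing $T(a_j,u_j,v)-T(\bar a,\bar u,v)=T(a_j-\bar a,u_j,v)+T(\bar a,u_j-\bar u,v)$, the first term vanishes by \eqref{TConC} and the second by weak convergence and \eqref{Cont}, so $T(\bar a,\bar u,v)=m(v)$, i.e. $\bar u\in\mathcal U(\bar a)$; as $V\hookrightarrow Z$ continuously, $u_j\rha\bar u$ in $Z$ and weak lower semicontinuity of $\|\cdot-z\|_Z^2$ gives optimality of $(\bar a,\bar u)$. For fixed $n$ the same scheme applies to \eqref{RMinOLS}: along $a_j\to a^\ast$ in $L$ the regularized solutions are bounded because $T_{\tau_n}+\epsilon_n S$ is coercive with constant $\epsilon_n\alpha_0$ (by \eqref{Pos1} and \eqref{Coer0}), and I pass to the limit in \eqref{RVP} to identify the weak limit with $u_{\varsigma_n}(a^\ast)$ via Lax–Milgram uniqueness; the data term is weakly lsc, $R$ is lsc in $\|\cdot\|_L$, and $\kappa_n>0$, so $a^\ast$ minimizes $\widehat J_{\kappa_n}$. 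This last step needs each $T_{\tau_n}$ to inherit the weak continuity \eqref{TConC}, which holds in the intended BV and discrete settings.

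For the convergence I would first prove a uniform bound $\sup_{a\in A,\,n}\|u_{\varsigma_n}(a)\|_V<\infty$. The naive positivity estimate degenerates like $\|m\|/\epsilon_n$, so instead I compare $u_{\varsigma_n}(a)$ with an arbitrary $u_a\in\mathcal U(a)$, test the difference of the two identities against itself, discard the nonnegative $T$-term, and absorb the perturbations using \eqref{TError}, \eqref{M-Error}, \eqref{L-Error} and the ratios $\tau_n/\epsilon_n,\nu_n/\epsilon_n\to0$ from \eqref{SeqC}; since $\|u_a\|_V$ and $\|a\|_B$ are bounded on $A$, a uniform bound follows. Extracting $a_{\varsigma_n}\to\hat a$ in $L$ and $u_{\varsigma_n}(a_{\varsigma_n})\rha\hat u$ in $V$, I pass to the limit in \eqref{RVP}: now $\tau_n,\epsilon_n,\nu_n,\delta_n\to0$ annihilate the regularization and perturbation terms, while \eqref{TConC} and weak convergence give $T(\hat a,\hat u,v)=m(v)$, so $\hat u\in\mathcal U(\hat a)$.

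The heart of the argument — and the step I expect to be hardest — is to upgrade the comparison at the fixed optimum $\bar a$ of \eqref{MinOLS} to strong convergence $u_{\varsigma_n}(\bar a)\to\bar u$ in $Z$, since this is what makes $\widehat J_{\kappa_n}(a_{\varsigma_n})\le\widehat J_{\kappa_n}(\bar a)$ usable. Taking liminf on the left (weak lsc, $\kappa_n R\ge0$) gives $\tfrac12\|\hat u-z\|_Z^2\le\liminf\widehat J_{\kappa_n}(a_{\varsigma_n})$, so I must force the right side to converge to $\tfrac12\|\bar u-z\|_Z^2$, and here the dichotomy in hypothesis~(ii) is essential. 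If $\mathcal U(\bar a)=\{\bar u\}$, the bounded sequence $u_{\varsigma_n}(\bar a)$ has $\bar u$ as its only weak limit point, so $w_n:=u_{\varsigma_n}(\bar a)-\bar u\rha0$; feeding this into the comparison estimate, whose right-hand side reduces to $-S(\bar u,w_n)+\ell_{\delta_n}(w_n)+o(1)\to0$, yields $\alpha_0\|w_n\|_V^2\to0$, i.e. strong convergence. If instead $Z=V$, $S=\langle\cdot,\cdot\rangle_Z$ and $\ell_{\delta_n}=\langle z_{\delta_n},\cdot\rangle_Z$, then \eqref{RVP} becomes a Tikhonov-type selection pulling the solution toward the data; strong convergence to some $\tilde u\in\mathcal U(\bar a)$ follows as before, and testing the difference with $u_{\varsigma_n}(\bar a)-u^\ast$ for $u^\ast\in\mathcal U(\bar a)$, dividing by $\epsilon_n$ and passing to the limit yields the variational inequality $\langle\tilde u-z,\tilde u-u^\ast\rangle_Z\le0$ for all $u^\ast\in\mathcal U(\bar a)$, identifying $\tilde u=P_{\mathcal U(\bar a)}z$, which equals $\bar u$ because $\bar u$ is the OLS-optimal element over $\mathcal U(\bar a)$. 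In both cases the right side converges to $\tfrac12\|\bar u-z\|_Z^2$, whence $\|\hat u-z\|_Z\le\|\bar u-z\|_Z$; since $(\hat a,\hat u)\in\text{graph}(\mathcal U)$ and $\bar u$ is globally optimal, equality holds and $(\hat a,\hat u)$ solves \eqref{MinOLS}.

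Finally, for the optimality system at a fixed minimizer $a_{\varsigma_n}$ I split $\widehat J_{\kappa_n}=\Phi+\kappa_n R$ with $\Phi(a)=\tfrac12\|u_{\varsigma_n}(a)-z_{\delta_n}\|_Z^2$ smooth by \cref{d1RP} and $R$ convex on the convex set $A$. Inserting the feasible path $a_{\varsigma_n}+t(a-a_{\varsigma_n})$, using convexity of $R$, dividing by $t$ and letting $t\downarrow0$ gives $\langle\Phi'(a_{\varsigma_n}),a-a_{\varsigma_n}\rangle\ge\kappa_n(R(a_{\varsigma_n})-R(a))$. I define $p_{\varsigma_n}$ as the unique solution of \eqref{OCa} (Lax–Milgram, coercivity $\epsilon_n\alpha_0$) with the data misfit as right-hand side, and compute $\langle\Phi'(a_{\varsigma_n}),\delta a\rangle=\langle u_{\varsigma_n}(a_{\varsigma_n})-z_{\delta_n},\delta u\rangle_Z$ with $\delta u=Du_{\varsigma_n}(a_{\varsigma_n})\delta a$ solving \eqref{var4RP}; testing \eqref{OCa} with $\delta u$, the sensitivity equation with $p_{\varsigma_n}$, and invoking the symmetry of $T_{\tau_n}$ and $S$, this collapses to $T_{\tau_n}(\delta a,u_{\varsigma_n}(a_{\varsigma_n}),p_{\varsigma_n})$. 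Substituting $\delta a=a-a_{\varsigma_n}$ into the variational inequality produces exactly \eqref{OCb}.
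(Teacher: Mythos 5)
Your proposal is correct and follows essentially the same route as the paper: the direct method for both existence claims, the comparison estimate against elements of $\mathcal{U}(a)$ for the uniform bound on the regularized solutions, the identification of the limit of $u_{\varsigma_n}(\hat{a})$ at a fixed minimizer $\hat{a}$ of \eqref{MinOLS} with the closest point to $z$ in $\mathcal{U}(\hat{a})$ (the paper poses this as the variational inequality \eqref{PVI10} and identifies the weak limit with its unique solution via a lower-semicontinuity argument, whereas you derive the same inequality by dividing the regularized equation by $\epsilon_n$ and passing to the limit after first securing strong convergence --- a cosmetic difference), and the identical adjoint computation yielding \eqref{OCa}--\eqref{OCb}. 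One point in your favor: you explicitly note that passing to the limit in $T_{\tau_n}$ for fixed $n$ (existence of a minimizer of \eqref{RMinOLS}) requires a weak-continuity hypothesis on $T_{\tau_n}$ analogous to \eqref{TConC}, a condition the paper leaves implicit behind ``the arguments just used.''
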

\begin{proof}
    We begin by showing that \eqref{MinOLS} has a solution. Since $\mathcal{U}(a)$ is nonempty for each $a\in A$, optimization problem \eqref{MinOLS} is well-defined. Moreover, since for each $a\in A$,  $\widehat{J}(a)$ is bounded from below, there is a minimizing sequence $\{a_n\}$ in $A $ such that $ \lim_{n\to \infty}\widehat{J}(a_n)=\inf\{\widehat{J}(a),\ a\in A  \}$.
    Since $A$ is bounded, the sequence $\{a_n\}$ is bounded in $\widehat{B}$, and due to the compact embedding of $\widehat{B }$ into $L$, it has a subsequence which converges strongly in $\|\cdot\|_{L}$.  Keeping the same notation for subsequences as well, let $\{a_n\}$ be the subsequence converging in $\|\cdot\|_L$ to some $\bar{a}\in A$.
    Let $u_n\in \mathcal{U}(a_n)$ be arbitrary. Since $\mathcal{U}(A)$ is bounded, $\{u_n\}$ is bounded, and hence contains a weakly convergent subsequence. Let $\{u_n\}$ be the subsequence which converges to some $\bar{u}\in V$. We claim that $\bar{u}\in \mathcal{U}(\bar{a})$. By the definition of $(a_n,u_n)$, we have
    \begin{equation*}
        T(a_n,u_n,v)=m(v),\quad \text{for every}\ v\in V,
    \end{equation*}
    which can be rearranged as follows
    \begin{equation*}
        T(a_n-\bar{a},u_n,v)+T(\bar{a},u_n-\bar{u},v)+T(\bar{a},\bar{u},v)=m(v),\quad \text{for every}\ v\in V,
    \end{equation*}
    and by passing this equation to the limit $n\to\infty$, we obtain
    \begin{equation*}
        T(\bar{a},\bar{u},v)=m(v),\quad \text{for every}\ v\in V,
    \end{equation*}
    which means that $\bar{u}\in \mathcal{U}(\bar{a})$. The optimality of $\bar{a}$ now is direct consequence of the weak-lower-semicontinuity of $\|\cdot\|_Z$ and the lower-semicontinuity of $R$.

    We now return to \eqref{RMinOLS}. Evidently, for a fixed $n\in \mathbb{N}$, the existence of a solution $a_{\varsigma_n}$ for  \eqref{RMinOLS}  is a consequence of the arguments just used. Indeed, for any fixed $n\in \mathbb{N}$, \eqref{RVP} is uniquely solvable and the solution is bounded because of the ellipticity of $T_{\tau_n}+\epsilon_n S$.

    For simplicity, we set $a_n:=a_{\varsigma_n}$. Since $A$ is bounded, the sequence of solutions $\{a_n\}$ is uniformly bounded in $\widehat{B}$. As before, let $\{a_n\}$ be a subsequence that
    converges strongly to some $\bar{a}\in A$ in $L$.  Let $\{u_n\}$, where $u_n:=u_{\varsigma_n}(a_n)$, be the corresponding sequence of the solutions of the regularized variational problem \eqref{RVP}. That is, we have
    \begin{equation*}
        T_{\tau_n}(a_n,u_n,v)+\epsilon_n S(u_n,v) =m_{\nu_n}(v)+\epsilon_n\ell_{\delta_n}(v),\ \ \text{for every}\ v\in V.
    \end{equation*}
    We shall prove that $\{u_n\}$ is a bounded sequence. By assumption, for every $a\in A$, the solution set $\mathcal{U}(a)$ is nonempty. Let $\tilde{u}_n\in \mathcal{U}(a_n)$ be chosen arbitrarily. Since $\mathcal{U}(A)$ is bounded by assumption, the sequence $\{\tilde{u}_n\}$ is bounded. Moreover, we have
    \begin{equation*}
        T(a_n,\tilde{u}_n,v)=m(v),\quad \text{for every}\ v\in V. 
    \end{equation*}
    We combine the above two variational problems and rearrange them to obtain
    \begin{equation*}
        T_{\tau_n}(a_n,\tilde{u}_n,v)-T(a_n,\tilde{u}_n,v)+\epsilon_n S(u_n,v)+m(v)-m_{\nu_n}(v)-\epsilon_n\ell_{\delta_n}(v)-T_{\tau_n}(a_n,\tilde{u}_n-u_n,v)=0.
    \end{equation*}
    Setting $v=\tilde{u}_n-u_n$ and using the fact that $T_{\tau_n}(a_n,\tilde{u}_n-u_n,\tilde{u}_n-u_n)\geq 0$, we obtain
    \begin{equation*}
        \begin{aligned}
            \epsilon_nS( u_n,u_n) &\leq \epsilon_n S(u_n,\tilde{u}_n)+ T_{\tau_n}(a_n,\tilde{u}_n,\tilde{u}_n-u_n)-T(a_n,\tilde{u}_n,\tilde{u}_n-u_n)-\epsilon_n\ell_{\delta_n}(\tilde{u}_n-u_n).\\
            \MoveEqLeft[-1]+m(\tilde{u}_n-u_n)-m_{\nu_n}(\tilde{u}_n-u_n)\\
            &\leq \epsilon_n\beta_0\|u_n\|_V\|\tilde{u}_n\|_V+\tau_n\|a_n\|_B\|\tilde{u}_n\|_V\|\tilde{u}_n-u_n\|_V
            +\epsilon_n\|\ell_{\delta_n}\|_{V^*}\|\tilde{u}_n-u_n\|_V \\
            \MoveEqLeft[-1]+\nu_n\|\tilde{u}_n-u_n\|_V,
        \end{aligned}
    \end{equation*}
    implying
    \begin{equation*}
        \|u_n\|_V\leq \frac{\beta_0}{\alpha_0}\|\tilde{u}_n\|_V+\left[\frac{\tau_n}{\alpha_0\epsilon_n}\|a_n\|_B\|\tilde{u}_n\|_V
            +\frac{\nu_n}{\alpha_0\epsilon_n}
        +\frac{\delta_n+\|\ell\|_{V^*}}{\alpha_0}\right]\left[\frac{\|\tilde{u}_n\|_V}{\|u_n\|_V}+1\right],
    \end{equation*}
    which confirms the boundedness of $\{u_n\}$.

    The reflexivity of $V$ ensures that $\{u_n\}$ has a weakly convergent subsequence. Keeping the same notation for subsequences, let $\{u_n\}$ be a subsequence converging weakly to some $\bar{u}$. We shall show that $\bar{u}\in \mathcal{U}(\bar{a})$. Since  $a_n$ is a minimizer of \eqref{RMinOLS},
    we have
    \begin{equation*}
        T_{\tau_n}(a_n,u_n,v)+\epsilon_n S(u_n,v)=m_{\nu_n}(v)+\epsilon_n\ell_{\delta_n}(v)\quad \text{for every}\ v\in V,
    \end{equation*}
    and by using the rearrangement
    \begin{equation*}
        \begin{aligned}
            T_{\tau_n}(a_n,u_n,v)&=T(a_n,u_n,v)+T_{\tau_n}(a_n,u_n,v)-T(a_n,u_n,v)\\
                                 &=T(a_n-\bar{a},u_n,v)+T(\bar{a},u_n-\bar{u},v)+T(\bar{a},\bar{u},v)\\
                                 &+T_{\tau_n}(a_n,u_n,v)-T(a_n,u_n,v),
        \end{aligned}
    \end{equation*}
    we obtain the following equation
    \begin{multline*}
        T(a_n-\bar{a},u_n,v)+T(\bar{a},u_n-\bar{u},v)+T(\bar{a},\bar{u},v)+T_{\tau_n}(a_n,u_n,v)-T(a_n,u_n,v)\\
        +\epsilon_n S(u_n,v)=m_{\nu_n}(v)+\epsilon_n\ell_{\delta_n}(v),
    \end{multline*}
    which due to the imposed conditions, when passed to the limit $n\to \infty$,  implies that
    \begin{equation*}
        T(\bar{a},\bar{u},v)=m(v),
    \end{equation*}
    and because of the fact that $v\in V$ was chosen arbitrary, confirms that $\bar{u}\in \mathcal{U}(\bar{a})$.

    The optimality of $a_n\in A$ for \eqref{RMinOLS} means that for $n\in \mathbb{N}$ and each $a\in A$,  we have
    \begin{equation}\label{UseNew}
        \widehat{J}_{\kappa_n}(a_n):=\frac12\|u_{\varsigma_n}(a_n)-z_{\delta_n}\|^2_Z+\kappa_n R(a_n)\leq \frac{1}{2}\|u_{\varsigma_n}(a)-z_{\delta_n}\|^2_Z+\kappa_nR(a),
    \end{equation}
    where $u_{\varsigma_n}(a)$ is the solution of regularized optimization problem \eqref{RVP}.

    Let $(\hat{a},\hat{u})$ be a solution of \eqref{MinOLS}. Before any further advancement, we first analyze the behavior of $u_{\varsigma_n}(\hat{a})$. By the definition of $u_{\varsigma_n}(\hat{a})$, we have
    \begin{equation}\label{PVI1}
        T_{\tau_n}(\hat{a},u_{\varsigma_n}(\hat{a}),v)+\epsilon_n S(u_{\varsigma_n}(\hat{a}),v)=m_{\nu_n}(v)+\epsilon_n\ell_{\delta_n}(v),\quad \text{for every}\ v\in V.
    \end{equation}
    As in earlier part of this proof, it can be shown that $\{u_{\varsigma_n}(\hat{a})\}$ is uniformly bounded. Therefore, there is a subsequence $\{u_{\varsigma_n}(\hat{a})\}$ converging weakly to some $\bar{u}(\hat{a})\in \mathcal{U}(\hat{a})$.

    Recalling that the set $\mathcal{U}(\hat{a})$ is closed and convex, we consider the following variational inequality: Find $\tilde{u}(\hat{a})\in \mathcal{U}(\hat{a})$ such that
    \begin{equation}\label{PVI10}
        S(\tilde{u}(\hat{a}),w-\tilde{u}(\hat{a}))\geq \ell(w-\tilde{u}(\hat{a})),\ \text{for every}\ w\in \mathcal{U}(\hat{a}).
    \end{equation}
    Due to the ellipticity of $S(\cdot,\cdot)$, the above variational inequality has a unique solution $\tilde{u}(\hat{a})$. Furthermore,
    since $\tilde{u}(\hat{a})\in \mathcal{U}(\hat{a})$, we have
    \begin{equation}\label{PVI2}
        T(\hat{a},\tilde{u}(\hat{a}),v)=m(v),\quad \text{for every}\ v\in V.
    \end{equation}
    We combine \eqref{PVI1} and \eqref{PVI2} to obtain
    \begin{multline*}
        T(\hat{a},u_{\varsigma_n}(\hat{a})-\tilde{u}(\hat{a}),v)+T_{\tau_n}(\hat{a},u_{\varsigma_n}(\hat{a}),v)-T(\hat{a},u_{\varsigma_n}(\hat{a}),v)+
        \epsilon_nS(u_{\varsigma_n}(\hat{a}),v)\\
        =m_{\nu_n}(v)-m(v)+\epsilon_n\ell_{\delta_n}(v)
    \end{multline*}
    and by setting $v=\tilde{u}(\hat{a})-u_{\varsigma_n}(\hat{a})$, and using the positivity of $T$, we get
    \begin{multline}
        \left[\frac{\tau_n}{\epsilon_n}\|\hat{a}\|_B\|u_{\varsigma_n}(\hat{a})\|_V +\frac{\nu_n}{\epsilon_n}+\delta_n\right]\|\tilde{u}(\hat{a})-u_{\varsigma_n}(\hat{a})\|_V -\ell(\tilde{u}(\hat{a})-u_{\varsigma_n}(\hat{a}))\\
        \geq S(u_{\varsigma_n}(\hat{a}),u_{\varsigma_n}(\hat{a})-\tilde{u}(\hat{a}))\geq S(\tilde{u}(\hat{a}),u_{\varsigma_n}(\hat{a})-\tilde{u}(\hat{a}))\label{PVI20}
    \end{multline}
    Since the bilinear form $S$ is positive, we have 
    \begin{equation*}
        S(\bar{u}(\hat{a}),\bar{u}(\hat{a}))\leq \liminf_{n\to \infty}S(u_{\varsigma_n}(\hat{a}),u_{\varsigma_n}(\hat{a})),
    \end{equation*}
    which, by taking \eqref{PVI20} into account, implies that
    \begin{equation}\label{MVI}
        S(\bar{u}(\hat{a}),\tilde{u}(\hat{a})-\bar{u}(\hat{a})) \geq \ell(\tilde{u}(\hat{a})-\bar{u}(\hat{a})).
    \end{equation}
    We set $w=\bar{u}(\hat{a})$ in \eqref{PVI10} to obtain
    \begin{equation*}
        S(\tilde{u}(\hat{a}),\bar{u}(\hat{a})-\tilde{u}(\hat{a}))\geq \ell (\bar{u}(\hat{a})-\tilde{u}(\hat{a})),
    \end{equation*}
    which when combined with \eqref{MVI} yields $S(\bar{u}(\hat{a})-\tilde{u}(\hat{a}),\tilde{u}(\hat{a})-\bar{u}(\hat{a}))\geq 0$, implying
    \begin{equation*}
        0\geq S(\bar{u}(\hat{a})-\tilde{u}(\hat{a}),\bar{u}(\hat{a})-\tilde{u}(\hat{a}))\geq \alpha_0\|\bar{u}(\hat{a})-\tilde{u}(\hat{a})\|^2_V,
    \end{equation*}
    and hence $\bar{u}(\hat{a})=\tilde{u}(\hat{a})$. Since $\bar{u}(\hat{a})$ is unique, the whole sequence $u_{\varsigma_n}(\hat{a})$ converges weakly to $\bar{u}(\hat{a})$. The convergence is in  fact strong due to \eqref{PVI20}. Indeed, by the coercivity of $S$, we have
    \begin{equation*}
        \alpha_0\|u_{\varsigma_n}(\hat{a})-\bar{u}(\hat{a})\|^2_V\leq S(u_{\varsigma_n}(\hat{a}),u_{\varsigma_n}(\hat{a})-\bar{u}(\hat{a}))-S(\bar{u}(\hat{a}),u_{\varsigma_n}(\hat{a})-\bar{u}(\hat{a})),
    \end{equation*}
    where $S(u_{\varsigma_n}(\hat{a}),u_{\varsigma_n}(\hat{a})-\bar{u}(\hat{a}))\to 0$ as $n\to \infty$ by using \eqref{PVI20} and  $S(\bar{u}(\hat{a}),u_{\varsigma_n}(\hat{a})-\bar{u}(\hat{a}))\to 0$ as $n\to \infty$ by the linearity of $S$. Hence the strong convergence of $\{u_{\varsigma_n}(\hat{a})\}$ to $\bar{u}(\hat{a})$ follows.

    The above observations are valid when $\mathcal{U}$ is a set-valued map. We now prove the final assertion by assuming that $Z=V$, for any $v\in V$, we have $\ell_{\delta_n}(v)=\langle z_{\delta_n},v\rangle_V$ and $S(u,v)=\langle u,v\rangle_V$. Then, it follows from \eqref{PVI10} that for an arbitrary
    $\breve{u}(\hat{a})\in \mathcal{U}(\hat{a})$, we have
    \begin{equation*}
        \langle \bar{u}(\hat{a})-z,\breve{u}-\bar{u}(\hat{a})\rangle_V\geq 0 
    \end{equation*}
    which implies that
    \begin{equation*}
        \|\bar{u}(\hat{a})-z\|_Z\leq \|\breve{u}(\hat{a})-z\|_Z,
    \end{equation*}
    and hence  $\bar{u}(\hat{a})$ is the closest element to $z$ among all the elements $\breve{u}(\hat{a})\in \mathcal{U}(\hat{a})$.

    Therefore, as before, we have
    \begin{equation*}
        \begin{aligned}
            \|u(\bar{a})-z\|^2_Z&\leq \liminf_{n\to \infty}\left\{\|u_{\varsigma_n}(\hat{a})-z_{\delta_n}\|^2_Z+\kappa_n R(\hat{a})\right\},\\
                                &\leq \limsup_{n\to \infty}\|u_{\varsigma_n}(\hat{a})-z\|^2_Z\\
                                &=\|\bar{u}(\hat{a})-z\|^2_Z\\
                                &\leq \|\breve{u}(\hat{a})-z\|^2_Z,
        \end{aligned}
    \end{equation*}
    where $\breve{u}(\hat{a})\in \mathcal{U}(\hat{a})$ is arbitrary. In other words, the above inequality confirms the existence of an element $(\bar{a},u(\bar{a}))\in \text{graph}(\mathcal{U})$ such that  for every $(a,u)\in \text{graph}(\mathcal{U})$, we have
    \begin{equation*}
        \|u(\bar{a})-z\|^2_Z \leq \|u-z\|^2_Z 
    \end{equation*}
    and hence $\bar{a}\in A$ is a minimizer of \eqref{MinOLS}. Evidently, if $\mathcal{U}(a)$ is singleton for each $a\in A$, then the supplied arguments remain valid for any $S$ and $\ell$.

    Finally, we proceed to prove \eqref{OCa} and \eqref{OCb}.  Note that a necessary optimality condition for $a_{\varsigma_n}$ to be a solution of \eqref{RMinOLS} is the following variational inequality
    \begin{equation}\label{NOC-VI}
        D\widehat{J}_{\kappa_n}(a_{\varsigma_n})(a-a_{\varsigma_n})\geq \kappa_n (R(a_{\varsigma_n})-R(a)),\quad
        \text{for every}\, a\in A,
    \end{equation}
    where $\widehat{J}_{\kappa_n}(a_{\varsigma_n}):=\frac{1}{2}\Vert u_{\varsigma_n}(a_{\varsigma_n})-z_{\delta_n}\Vert_{Z}^{2}$ and
    $D\widehat{J}_{\kappa_n}(a)(b)=\langle Du_{\varsigma_n}(a)(b),u_{\varsigma_n}(a)-z_{\delta_n}\rangle_Z$.

    For $n\in \mathbb{N}$, we define the adjoint equation: Find
    $p_{\varsigma_n}\in V$, such that
    \begin{equation}\label{AE-OLS}
        T_{\tau_n}(a_{\varsigma_n},p_{\varsigma_n},v)+\epsilon_nS(p_{\varsigma_n},v)=\left\langle z_{\delta_n}-u_{\varsigma_n}(a_{\varsigma_n}),v\right\rangle_{Z},\quad \text{for every}\ v\in V.
    \end{equation}
    Evidently, \eqref{AE-OLS} has a unique solution $p_{\varsigma_n}$. Taking $v=Du_{\varsigma_n}(a_{\varsigma_n})(a-a_{\varsigma_n})$, we get
    \begin{equation*}
        \begin{aligned}
            \left\langle Du_{\varsigma_n}(a_{\varsigma_n})(a-a_{\varsigma_n}),u_{\varsigma_n}(a_{\varsigma_n})-z_{\delta_n}\right\rangle_Z
            &=-T_{\tau_n}(a_{\varsigma_n},p_{\varsigma_n},Du_{\varsigma_n}(a_{\varsigma_n})(a-a_{\varsigma_n}))\\
            \MoveEqLeft[-1]-
            \epsilon_nS(p_{\varsigma_n},Du_{\varsigma_n}(a_{\varsigma_n})(a-a_{\varsigma_n}))\\
            &=-T_{\tau_n}(a_{\varsigma_n},Du_{\varsigma_n}(a_{\varsigma_n})(a-a_{\varsigma_n}),p_{\varsigma_n})\\
            \MoveEqLeft[-1]-\epsilon_nS(Du_{\varsigma_n}(a_{\varsigma_n})(a-a_{\varsigma_n}),p_{\varsigma_n})\\
            &=T_{\tau_n}(a-a_{\varsigma_n},u_{\varsigma_n}(a_{\varsigma_n}), p_{\varsigma_n}),
        \end{aligned}
    \end{equation*}
    by \eqref{var4RP} and \eqref{OCb} follows by using the above expression in \eqref{NOC-VI}. The proof is complete.
\end{proof}
\begin{remark} Since $(\hat{a},\hat{u})$ is a minimizer of \eqref{MinOLS}, we have $\|\hat{u}(\hat{a})-z\|_Z\leq \|u-z\|_Z$, for each $(a,u)\in \text{graph}(\mathcal{U})$. Since $\|\bar{u}(\hat{a})-z\|_Z\geq \|\hat{u}(\hat{a})-z\|_Z$ is possible, we can't use $(\hat{a},\hat{u})$ to show that $(\bar{a},\bar{u})$ is optimal.  We circumvented this difficulty by showing  $\|\bar{u}(\hat{a})-z\|_Z\leq \|\hat{u}(\hat{a})-z\|_Z$. A practical implication of the condition $Z=V$ is that typically more regular data is required.
\end{remark}
\begin{remark}
    For $Z=V$, $\ell_{\delta_n}(\cdot)=\langle z_{\delta_n},\cdot\rangle_V$ and
    $S(\cdot,\cdot)=\langle \cdot,\cdot \rangle_V$, \eqref{RVP} reduces to
    \begin{equation*}
        T_{\tau_n}(a,u_{\varsigma_n}(a),v)+\epsilon_n \langle u_{\varsigma_n}(a)-z_{\delta_n},v\rangle_V=m_{\nu_n}(v),\ \ \text{for every}\ v\in V,
    \end{equation*}
    which steers the regularized solutions towards the solution of \eqref{VP} that is closest to $z$. If $\ell_{\delta_n}(\cdot)=0$, then the regularized solutions converge to a minimum norm solution of \eqref{VP}. We also note that if the sequence of adjoint solutions $\{p_{\varsigma_n}\}$ is bounded, then by passing \eqref{OCa} and \eqref{OCb} to limit, we shall derive optimality conditions for \eqref{MinOLS}. Because an optimality condition for \eqref{MinOLS} would involve the derivative of the set-valued parameter-to-solution map, such convergence result could shed some light on its contingent differentiability.
\end{remark}

\subsection{The Modified OLS Approach}\label{sec:framework:mols}

We shall now focus on the following MOLS-based constrained optimization problem
\begin{equation}\label{MinMOLS} 
    \min_{a\in A}J(a):=\frac12T(a,u(a)-z,u(a)-z),
\end{equation}
which aims to minimize the energy associated to underlying noncoercive variational problem \eqref{VP}. Here $u(a)\in \mathcal{U}(a)$ and $z\in V$ is the measured data. Studies related to the MOLS functional and its extensions can be found in \cite{GocJadKha06,GocKha05,GocKha09,JadKhaRusSamWin14}.

We continue to assume that $\{\epsilon_n\}$, $\{\tau_n\}$, $\{\kappa_n\}$, $\{\delta_n\}$, and $\{\nu_n\}$ are sequence of  positive reals, $\ell\in V^*$, and for each $n\in \mathbb{N}$, $m_{\nu_n}\in V^*$, $\ell_{\delta_n}\in V^*$, and  $z_{\delta_n}\in  V$ satisfying \eqref{Noise}. Furthermore, the trilinear form $T_{\tau_n}:B\times V\times V\to \mathbb{R}$ satisfies \eqref{TNoise} and the bilinear and symmetric form  $S:V\times V\to \mathbb{R}$ satisfies \eqref{SCond}.

We again consider the regularized problem: Given $a\in A$, find $u_{\varsigma_n}(a)\in V$
such that
\begin{equation}\label{RVP1}
    T_{\tau_n}(a,u_{\varsigma_n}(a),v)+\epsilon_n S(u_{\varsigma_n}(a),v)=m_{\nu_n}(v)+\epsilon_n\ell_{\delta_n}(v),\ \ \text{for every}\ v\in V,
\end{equation}
where $\epsilon_n>0$ is a regularization parameter and $\varsigma_n:=(\epsilon_n,\tau_n,\nu_n,\delta_n)$.   For a fixed $n\in \mathbb{N}$, let $u_{\varsigma_n}(a)$ be the unique solution of  \eqref{RVP1}.

We first consider the following analogue of the MOLS objective with perturbed data:
\begin{equation}\label{MOLSNC} J_{\epsilon_n}(a):=\frac12T_{\tau_n}(a,u_{\varsigma_n}(a)-z_{\delta_n},u_{\varsigma_n}(a)-z_{\delta_n})+\frac{\epsilon_n}{2}S(u_{\varsigma_n}(a)-z_{\delta_n},u_{\varsigma_n}(a)-z_{\delta_n}).
\end{equation}

We have the following result:
\begin{theorem}\label{THR1}
    For each $n\in \mathbb{N}$, the modified output least-squares functional \eqref{MOLSNC} is convex in  $A$.
\end{theorem}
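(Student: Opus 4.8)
The plan is to reduce the convexity of $J_{\epsilon_n}$ to the convexity of a single scalar functional of $a$ that can be recognized as $(-2)$ times a pointwise infimum of functions that are affine in $a$. First I would introduce the combined bilinear form $\mc{B}_a(w,v):=T_{\tau_n}(a,w,v)+\epsilon_n S(w,v)$ and the fixed linear functional $\mc{L}(v):=m_{\nu_n}(v)+\epsilon_n\ell_{\delta_n}(v)$, so that the regularized state equation \eqref{RVP1} reads $\mc{B}_a(u_{\varsigma_n}(a),v)=\mc{L}(v)$ for all $v\in V$, while $J_{\epsilon_n}(a)=\frac12\mc{B}_a(u_{\varsigma_n}(a)-z_{\delta_n},u_{\varsigma_n}(a)-z_{\delta_n})$. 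For $a\in A$ the form $\mc{B}_a$ is symmetric (since both $T_{\tau_n}(a,\cdot,\cdot)$ and $S$ are symmetric in their two slots) and elliptic, because $T_{\tau_n}(a,w,w)\ge 0$ by \eqref{Pos1} and $S$ is coercive by \eqref{Coer0}; hence $u_{\varsigma_n}(a)$ is precisely the unique minimizer over $V$ of the energy $E_a(w):=\frac12\mc{B}_a(w,w)-\mc{L}(w)$.

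Second, I would expand the quadratic and eliminate the state. Writing $u:=u_{\varsigma_n}(a)$ and $z:=z_{\delta_n}\in V$ and using symmetry of $\mc{B}_a$, one has $J_{\epsilon_n}(a)=\frac12\mc{B}_a(u,u)-\mc{B}_a(u,z)+\frac12\mc{B}_a(z,z)$. Testing \eqref{RVP1} with $v=u$ gives $\mc{B}_a(u,u)=\mc{L}(u)$, and testing with $v=z$ gives $\mc{B}_a(u,z)=\mc{L}(z)$, so that $J_{\epsilon_n}(a)=\frac12\mc{L}(u_{\varsigma_n}(a))-\mc{L}(z)+\frac12\mc{B}_a(z,z)$. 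Here $-\mc{L}(z)$ is constant in $a$, and $\frac12\mc{B}_a(z,z)=\frac12 T_{\tau_n}(a,z,z)+\frac{\epsilon_n}2 S(z,z)$ is affine in $a$, being linear in $a$ through the first slot of the trilinear form $T_{\tau_n}$. Thus the whole question reduces to the convexity of the single term $a\mapsto \mc{L}(u_{\varsigma_n}(a))$.

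Third, I would identify this term with the energy at its minimizer: since $u=u_{\varsigma_n}(a)$ minimizes $E_a$ and $\mc{B}_a(u,u)=\mc{L}(u)$, we get $E_a(u_{\varsigma_n}(a))=-\frac12\mc{L}(u_{\varsigma_n}(a))$, i.e. $\mc{L}(u_{\varsigma_n}(a))=-2\min_{w\in V}E_a(w)$. The decisive observation is that for each fixed $w\in V$ the map $a\mapsto E_a(w)=\frac12 T_{\tau_n}(a,w,w)+\frac{\epsilon_n}2 S(w,w)-\mc{L}(w)$ is affine in $a$, because $T_{\tau_n}$ is linear in its first argument and the remaining terms are $a$-independent. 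Consequently $a\mapsto \min_{w\in V}E_a(w)$ is a pointwise infimum of affine functions and is therefore concave on the convex set $A$: for $a_1,a_2\in A$ and $t\in[0,1]$ one has $a:=ta_1+(1-t)a_2\in A$ and $E_a(w)=tE_{a_1}(w)+(1-t)E_{a_2}(w)$, whence $\min_w E_a(w)=\min_w[tE_{a_1}(w)+(1-t)E_{a_2}(w)]\ge t\min_w E_{a_1}(w)+(1-t)\min_w E_{a_2}(w)$.

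Finally, concavity of the infimum yields convexity of $\mc{L}(u_{\varsigma_n}(a))=-2\min_w E_a(w)$, and then $J_{\epsilon_n}(a)=\frac12\mc{L}(u_{\varsigma_n}(a))-\mc{L}(z)+\frac12\mc{B}_a(z,z)$ is convex as a sum of a convex, a constant, and an affine function of $a$. The step I would flag as the crux is the passage that replaces the state $u_{\varsigma_n}(a)$ by the variational minimum of $E_a$: it rests on the symmetry and ellipticity of $\mc{B}_a$ holding uniformly for $a\in A$ (so that $u_{\varsigma_n}(a)$ genuinely is the energy minimizer and the identity $\mc{L}(u)=-2\min_w E_a(w)$ is valid), combined with the linearity of $T_{\tau_n}$ in the parameter, which is exactly what makes the minimand an affine family. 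Everything else is routine bookkeeping.
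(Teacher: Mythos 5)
Your proof is correct, but it follows a genuinely different route from the paper's. The paper proves \cref{THR1} by differentiation: it computes $DJ_{\epsilon_n}(a)\delta a$ via the chain rule, uses the characterization \eqref{var4RP} of $Du_{\varsigma_n}(a)\delta a$ to simplify it to \eqref{MOLS-DER}, differentiates once more, and arrives at
\begin{equation*}
    D^2J_{\epsilon_n}(a)(\delta a,\delta a)=T_{\tau_n}(a,Du_{\varsigma_n}(a)\delta a,Du_{\varsigma_n}(a)\delta a)+\epsilon_n S(Du_{\varsigma_n}(a)\delta a,Du_{\varsigma_n}(a)\delta a)\geq 0,
\end{equation*}
so convexity follows from nonnegativity of the Hessian (note the paper's bound \eqref{convex} should read $\epsilon_n\alpha_0$ rather than $\epsilon_n\beta_0$, since it is the coercivity constant of $S$ that enters). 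You instead use a marginal-function argument: after eliminating the state via the two test functions $v=u_{\varsigma_n}(a)$ and $v=z_{\delta_n}$, you identify $J_{\epsilon_n}(a)$, up to a constant and a term affine in $a$, with $-\min_{w\in V}E_a(w)$, and convexity follows because a pointwise infimum of affine functions of $a$ is concave. Each approach has its advantages. Yours is derivative-free and works directly on all of $A$, needing only solvability of \eqref{RVP1} along segments in the convex set $A$; the paper's Hessian inequality \eqref{convex} is stated only for $a$ in the interior of $A$, which is a delicate point when $A$ has empty interior in $B$ (as happens for the TV-constrained set \eqref{SetA} in $L^\infty(\Omega)$), so your argument is arguably cleaner on this score. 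On the other hand, the paper's computation is not only about convexity: the derivative formula \eqref{MOLS-DER} it produces is exactly what is reused later for the optimality condition \eqref{MOLS-OC} in \cref{ExisConT-MOLS} and for the discrete gradient and Hessian formulas in \cref{sec:computational:mols}, so the differential route earns its keep in the rest of the paper. Finally, be aware that both arguments (yours through the energy characterization of $u_{\varsigma_n}(a)$ and the expansion of $\mathcal{B}_a(u-z,u-z)$, the paper's through the chain-rule manipulations) rely on $T_{\tau_n}(a,\cdot,\cdot)$ being symmetric in its last two slots; the paper states this only for $T$ but uses it for $T_{\tau_n}$ as well (explicitly in \cref{sec:adjoint}), so you are on the same footing there, and it is good that you flagged the symmetry and ellipticity of $\mathcal{B}_a$ as the crux.
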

\begin{proof} For each $n\in \mathbb{N}$, the functional $J_{\epsilon_n}$ is evidently infinitely differentiable.  The first
    derivative is derived by the using the chain rule:
    \begin{multline*}
        DJ_{\epsilon_n}(a)\delta a=\frac{1}{2}T_{\tau_n}(\delta a,u_{\varsigma_n}(a)-z_{\delta_n},u_{\varsigma_n}(a)-z_{\delta_n})+T_{\tau_n}(a,Du_{\varsigma_n}(a)\delta a,u_{\varsigma_n}(a)-z_{\delta_n})\\
        +\epsilon_n S(Du_{\varsigma_n}(a)\delta a, u_{\varsigma_n}-z_{\delta_n}).
    \end{multline*}
    By using \eqref{var4RP}, we have
    \begin{equation*}
        T_{\tau_n}(a,Du_{\varsigma_n}(a)\delta a,u_{\varsigma_n}(a)-z_{\delta_n})+\epsilon_n S(Du_{\varsigma_n}(a)\delta a,u_{\varsigma_n}(a)-z_{\delta_n}) =-T_{\tau_n}(\delta a,u_{\varsigma_n},u_{\varsigma_n}(a)-z_{\delta_n}),
    \end{equation*}
    and hence
    \begin{equation}\label{MOLS-DER}
        \begin{aligned}[b]
            DJ_{\epsilon_n}(a)\delta a&=\frac{1}{2}T_{\tau_n}(\delta a,u_{\varsigma_n}(a)-z_{\delta_n},u_{\varsigma_n}(a)-z_{\delta_n})-
            T_{\tau_n}(\delta a,u_{\varsigma_n}(a),u_{\varsigma_n}(a)-z_{\delta_n})\\
            &=-\frac{1}{2}T_{\tau_n}(\delta a,u_{\varsigma_n}(a)+z_{\delta_n},u_{\varsigma_n}(a)-z_{\delta_n}).
        \end{aligned}
    \end{equation}
    It now follows that
    \begin{equation*}
        \begin{aligned}
            D^2J_{\epsilon_n}(a)(\delta a,\delta a)&=-\frac{1}{2}T_{\tau_n}(\delta a,Du_{\varsigma_n}(a)\delta a,u_{\varsigma_n}(a)-z_{\delta_n})-
            \frac{1}{2}T_{\tau_n}(\delta a,u_{\varsigma_n}(a)+z_{\delta_n},Du_{\varsigma_n}(a)\delta a)\\
            &=-T_{\tau_n}(\delta a,u_{\varsigma_n}(a),Du_{\varsigma_n}(a)\delta a )\\
            &=T_{\tau_n}(a,Du_{\varsigma_n}(a)\delta a,Du_{\varsigma_n}(a)\delta a)+\epsilon_n S(Du_{\varsigma_n}(a)\delta a,Du_{\varsigma_n}(a)\delta a),
        \end{aligned}
    \end{equation*}
    where in the last step, we used
    \begin{equation*}
        T_{\tau_n}(a,Du_{\varsigma_n}(a)\delta a,Du_{\varsigma_n}(a)\delta a)+\epsilon_n S(Du_{\varsigma_n}(a)\delta a,Du_{\varsigma_n}(a)\delta a) =-T_{\tau_n}(\delta a,u_{\varsigma_n},Du_{\varsigma_n}(a)\delta a),
    \end{equation*}
    which follows from \eqref{var4RP}.  We notice, in particular,
    that the following inequality holds for all $a$ in the interior of $A$:
    \begin{equation}\label{convex}
        D^2J_{\epsilon_n}(a)(\delta a,\delta a)\geq\epsilon_n\beta_0\|Du_{\varsigma_n}(a)\delta a\|_V^2.
    \end{equation}
    Thus $J_{\epsilon_n}$ is a smooth and convex functional.
\end{proof}

Our objective is to approximate \eqref{MinMOLS} by the following family of regularized MOLS based optimization problems: For $n\in \mathbb{N}$, find $a_{\varsigma_n}\in A$ by solving
\begin{multline}  \label{RMinMOLS}
    \min_{a\in A}J_{\kappa_n}(a):=\frac12T_{\tau_n}(a,u_{\varsigma_n}(a)-z_{\delta_n},u_{\varsigma_n}(a)-z_{\delta_n})
    +\frac{\epsilon_n}{2}S(u_{\varsigma_n}(a)-z_{\delta_n},u_{\varsigma_n}(a)-z_{\delta_n})
    +\kappa_n R(a),
\end{multline}
where $u_{\varsigma_n}(a)$ is the unique solution of the regularized variational problem \eqref{RVP1}.

We have the following result:
\begin{theorem}\label{ExisConT-MOLS} Assume that the following conditions hold:
    \begin{enumerate}
        \item  The set  $A$ is bounded in $\widehat{B}$, for each $a\in A$,  the solution set $\mathcal{U}(a)$ is nonempty, and the image set  $\mathcal{U}(A)$ is a  bounded set.
        \item For each $a\in A$, either $\mathcal{U}(a)$ is a singleton, or $\ell_{\delta_n}(v)=\langle z_{\delta_n},v\rangle_V$ and $S(u,v)=\langle u,v\rangle_V$.
        \item For every $a\in A$, and any $u,w\in V$, $\|u-z\|_V\leq \|w-z\|_V$ implies $T(a,u-z,u-z)\leq T(a,w-z,w-z)$.
    \end{enumerate}
    Then, the optimization problem \eqref{MinMOLS} has a solution, and for each $n\in \mathbb{N}$, optimization problem \eqref{RMinMOLS} has a solution. Moreover, there is a subsequence $\{a_{\varsigma_n}\}\subset A$ converging in $\|\cdot\|_L$ to a solution of \eqref{MinMOLS}.  Furthermore, a necessary and sufficient optimality condition for any solution $a_{\varsigma_n}$ of \eqref{MinMOLS} is the following variational inequality:
    \begin{equation}\label{MOLS-OC}
        -\frac{1}{2}T_{\tau_n}(a-a_{\varsigma_n},u_{\varsigma_n}(a_{\varsigma_n})+z_{\delta_n},u_{\varsigma_n}(a_{\varsigma_n})-z_{\delta_n})\geq \kappa_n[R(a_{\varsigma_n})-R(a)],\quad \text{for every } a\in A.
    \end{equation}
    Finally, \eqref{MOLS-OC}, when passed to the limit $n\to \infty$, results in the following variational inequality
    \begin{equation}\label{MOLS-OCC}
        -\frac{1}{2}T(a-\bar{a},u(\bar{a})+z,u(\bar{a})-z)\geq 0,\quad \text{for every}\ a\in A.
    \end{equation}
\end{theorem}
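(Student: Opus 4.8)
The plan is to establish the theorem in four natural stages, following the template already laid out in the OLS analysis (\cref{ExisConT-ROLS}) and adapting the convexity computation from \cref{THR1}. First, I would prove that the original MOLS problem \eqref{MinMOLS} admits a solution. As in \cref{ExisConT-ROLS}, I take a minimizing sequence $\{a_n\}\subset A$ and extract a subsequence converging strongly in $\|\cdot\|_L$ to some $\bar a\in A$, using the compact embedding of $\widehat B$ into $L$. Since $\mathcal{U}(A)$ is bounded, the associated $u_n\in\mathcal{U}(a_n)$ (chosen optimally) are bounded in $V$, so a weakly convergent subsequence exists with limit $\bar u$; the rearrangement $T(a_n-\bar a,u_n,v)+T(\bar a,u_n-\bar u,v)+T(\bar a,\bar u,v)=m(v)$ combined with \eqref{TConC} and weak convergence yields $\bar u\in\mathcal{U}(\bar a)$. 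Lower semicontinuity of $a\mapsto T(a,u-z,u-z)$ (which I would deduce from the continuity of $T$ together with weak lower semicontinuity arguments, invoking condition (iii) to control the dependence on $u$) then gives optimality of $(\bar a,\bar u)$.

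Second, for fixed $n$ the existence of a minimizer $a_{\varsigma_n}$ of \eqref{RMinMOLS} follows from the same compactness argument, now using that \eqref{RVP1} is uniquely solvable with bounded solution thanks to the ellipticity of $T_{\tau_n}+\epsilon_n S$, and that $J_{\epsilon_n}$ is continuous in $a$ and $R$ is lower semicontinuous. Third, I would prove the convergence of regularized minimizers: extract a subsequence $a_{\varsigma_n}\to\bar a$ in $\|\cdot\|_L$, show the corresponding regularized solutions $u_{\varsigma_n}(a_{\varsigma_n})$ are bounded (reusing the boundedness estimate from the proof of \cref{ExisConT-ROLS}) and converge weakly to some $\bar u\in\mathcal{U}(\bar a)$, and then identify $\bar u$ as the $S$-projection-type selection via the variational inequality \eqref{PVI10}, exactly as in \cref{ExisConT-ROLS}. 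The comparison of energies, using condition (iii) to pass from norm inequalities to inequalities for $T(a,\cdot,\cdot)$, shows $(\bar a,\bar u)$ solves \eqref{MinMOLS}.

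Fourth, for the optimality condition \eqref{MOLS-OC}, I would invoke the convexity of $J_{\epsilon_n}$ from \cref{THR1} together with the convexity of $R$: convexity makes the first-order variational inequality both necessary and sufficient. Using the gradient formula \eqref{MOLS-DER}, namely $DJ_{\epsilon_n}(a)\delta a=-\tfrac12 T_{\tau_n}(\delta a,u_{\varsigma_n}(a)+z_{\delta_n},u_{\varsigma_n}(a)-z_{\delta_n})$, and substituting $\delta a=a-a_{\varsigma_n}$ into the standard necessary condition $DJ_{\epsilon_n}(a_{\varsigma_n})(a-a_{\varsigma_n})\geq\kappa_n(R(a_{\varsigma_n})-R(a))$ yields \eqref{MOLS-OC} directly. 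Finally, passing \eqref{MOLS-OC} to the limit requires $\kappa_n\to 0$ (from \eqref{SeqC}), the strong convergence of $u_{\varsigma_n}(a_{\varsigma_n})$ to $\bar u$ in $V$, the data convergences \eqref{Z-Error}, and the trilinear-form perturbation bound \eqref{TError}; these let me replace $T_{\tau_n}$ by $T$, $z_{\delta_n}$ by $z$, and $u_{\varsigma_n}(a_{\varsigma_n})$ by $u(\bar a)$, giving \eqref{MOLS-OCC} since the right-hand side tends to zero.

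The main obstacle I anticipate is the passage to the limit in the $T_{\tau_n}$ trilinear term in \eqref{MOLS-OC}: because $a-a_{\varsigma_n}$ only converges strongly in $L$ (not in $B$) while $u_{\varsigma_n}(a_{\varsigma_n})$ converges in $V$, I must handle the product carefully, splitting $T_{\tau_n}(a-a_{\varsigma_n},\cdot,\cdot)-T(a-\bar a,\cdot,\cdot)$ into a perturbation piece controlled by \eqref{TError} and a coefficient-continuity piece controlled by \eqref{TConC}, and simultaneously upgrade the weak convergence of $u_{\varsigma_n}(a_{\varsigma_n})$ to strong convergence (as was done via \eqref{PVI20} in \cref{ExisConT-ROLS}) so that the quadratic terms in $u_{\varsigma_n}(a_{\varsigma_n})$ pass to the limit. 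A secondary delicate point is verifying that condition (iii) genuinely delivers the lower semicontinuity and energy-comparison inequalities needed in stages one and three, since it only relates $T(a,\cdot,\cdot)$-values to $V$-norm ordering rather than providing direct weak continuity.
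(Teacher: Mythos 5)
Your stages 1, 2 and 4 track the paper's route: existence by the direct method and compact embedding, ``standard arguments'' for the regularized problems, and the optimality conditions \eqref{MOLS-OC}--\eqref{MOLS-OCC} obtained from the convexity result of \cref{THR1} together with the gradient formula \eqref{MOLS-DER}. The genuine gap is in stage 3, which is exactly where the paper's proof has its one new idea. To pass the quadratic MOLS energy at the \emph{varying} minimizers to the limit, the paper proves the convergence \eqref{Func} by testing the regularized state equation with $v=u_n-z_{\delta_n}$ and rearranging: the quadratic term $T(a_n,u_n-z_{\delta_n},u_n-z_{\delta_n})$ is thereby rewritten through $m(u_n-z_{\delta_n})$, $T(a_n,z_{\delta_n},u_n-z_{\delta_n})$ and perturbation terms, all of which converge under \emph{weak} convergence of $u_n$ (using \eqref{TConC}, \eqref{TError}, \eqref{Noise} and $\bar u\in\mathcal{U}(\bar a)$). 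No strong convergence of $u_{\varsigma_n}(a_{\varsigma_n})$ is ever claimed or needed.

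Your substitute --- identifying the weak limit of $u_{\varsigma_n}(a_{\varsigma_n})$ as the $S$-projection selection via \eqref{PVI10} and upgrading to strong convergence via \eqref{PVI20} ``exactly as in \cref{ExisConT-ROLS}'' --- does not work, because that argument is only valid at a \emph{fixed} coefficient. In \cref{ExisConT-ROLS} it is applied to $u_{\varsigma_n}(\hat a)$ with $\hat a$ fixed: after combining the state equation with \eqref{PVI2} and dividing by $\epsilon_n$, the error terms are controlled precisely by the rates $\tau_n/\epsilon_n,\nu_n/\epsilon_n\to 0$ from \eqref{SeqC}. If you replace $\hat a$ by $a_{\varsigma_n}$, an additional coefficient-error term of the form $T(a_{\varsigma_n}-\bar a,u_{\varsigma_n},\,\cdot\,)$ appears and must also be divided by $\epsilon_n$; this term is uncontrollable, since $a_{\varsigma_n}\to\bar a$ only in $\|\cdot\|_L$ and without any rate, \eqref{TConC} gives convergence to zero but no rate, and the continuity bounds \eqref{Cont}, \eqref{TError} require smallness in $\|\cdot\|_B$, which is unavailable. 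So neither the selection identification nor the strong convergence holds at the varying minimizers, and the quadratic terms you need in stages 3 and 4 cannot be passed to the limit this way; you need the state-equation substitution \eqref{Func} (or an equivalent device). A secondary correction: condition (iii) of the theorem has nothing to do with lower semicontinuity; its role is in the final comparison of \eqref{MinE10}, converting the fact that the limit $\bar u(\hat a)$ of $u_{\varsigma_n}(\hat a)$ is the $V$-norm-closest element of $\mathcal{U}(\hat a)$ to $z$ into the energy inequality $T(\hat a,\bar u(\hat a)-z,\bar u(\hat a)-z)\le T(\hat a,\hat u-z,\hat u-z)$ against the optimal selection $\hat u$.
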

\begin{proof} It follows by standard arguments that for each $n\in \mathbb{N}$, \eqref{RMinMOLS}  has a solution. By assumption $\{a_n\}$ is a bounded sequence, and due to the compact imbedding of $\widehat{B}$ into $L$, it possesses a strongly convergent subsequence. Let $\{a_n\}$ be the  subsequence  which converges strongly to some $\bar{a}\in A$. Let $\{u_n\}$ be the corresponding sequence of the solutions of the regularized variational problems. As in the proof of \cref{ExisConT-ROLS}, we can show that $\{u_n\}$ is bounded, and there is a subsequence $\{u_n\}$ converging weakly to some $\bar{u}=u(\bar{a})$.

    The only new step is to show that for $a_n\to \bar{a}$ and $u_n:=u_{\varsigma_n}(a_n)\rha u(\bar{a})$, we have
    \begin{equation}\label{Func}
        T_{\tau_n}(a_n,u_n-z_{\delta_n},u_n-z_{\delta_n})\to T(a,u(\bar{a})-z,u(\bar{a})-z).
    \end{equation}
    Indeed, to prove this convergence, we note that for every $v\in V$, we have
    \begin{equation*}
        T_{\tau_n}(a_n,u_{n},v)+\epsilon_n S(u_n,v)=m_{\nu_n}(v)+\epsilon_n\ell_{\delta_n}(v),
    \end{equation*}
    which, for the choice $v=u_n-z_{\delta_n}$, can be rearranged as follows
    \begin{equation*}
        \begin{aligned}
            T(a_n,u_n-z_{\delta_n},u_n-z_{\delta_n})&=-T(a_n,z_{\delta_n},u_n-z_{\delta_n})+m(u_n-z_{\delta_n})-T_{\tau_n}(a_n,u_n,u_n-z_{\delta_n})\\
            \MoveEqLeft[-1]+T(a_n,u_n,u_n-z_{\delta_n})-\epsilon_nS(u_n,u_n-z_{\delta_n})+\epsilon_n \ell_{\delta_n}(u_n-z_{\delta_n})\\
            \MoveEqLeft[-1]+m_{\nu_n}(u_n-z_{\delta_n})-m(u_n-z_{\delta_n}),
        \end{aligned}
    \end{equation*}
    and since the right-hand side of the above equation converges to $T(\bar{a},-z,\bar{u}-z)+m(\bar{u}-z)$, which, due to the fact $\bar{u}\in\mathcal{U}(\bar{a})$, equals to $T(\bar{a},\bar{u}-z,\bar{u}-z)$, the desired convergence follows.

    Let $(\hat{a},\hat{u})$ be a solution of \eqref{MinMOLS}.  Then, by using \eqref{Func}, we have
    \begin{equation}\label{MinE10}
        \begin{aligned}[b]
            T(\bar{a},u(\bar{a})-z,u(\bar{a})-z)&= \lim_{n\to \infty}T_{\tau_n}(a_n,u_n(a_n)-z_{\delta_n},u_n(a_n)-z_{\delta_n})\\
                                                &\leq \liminf_{n\to \infty}\left\{T_{\tau_n}(a_n,u_n(a_n)-z_{\delta_n},u_n(a_n)-z_{\delta_n})+\kappa_n R(a_n)\right\}\\
                                                &\leq \liminf_{n\to \infty}\left\{T_{\tau_n}(\hat{a},u_n(\hat{a})-z_{\delta_n},u_n(\hat{a})-z_{\delta_n})+\kappa_n R(\hat{a})\right\}\\
                                                &\leq \limsup_{n\to \infty}T(\hat{a},u_n(\hat{a})-z,u_n(\hat{a})-z)\\
                                                &=T(\hat{a},u(\hat{a})-z,u(\hat{a})-z),
        \end{aligned}
    \end{equation}
    which, as for the case of the OLS objective ensures that $\bar{a}$ is a solution of \eqref{MinE10} and the proof is complete. Note that when $\mathcal{U}(a)$, for $a\in A$ is not singleton, we need additionally condition (3) on the trilinear form.

    Due to the convexity of the MOLS functional, a necessary and sufficient optimality condition for $a_{\varsigma_n}$ to be a solution of \eqref{RMinMOLS} is the variational inequality of second-kind
    \begin{equation}\label{Mr
        NOC-VI}
        DJ_{\kappa_n}(a_{\varsigma_n})(a-a_{\varsigma_n})\geq \kappa_n (R(a_{\varsigma_n})-R(a)),\quad
        \text{for every}\, a\in A,
    \end{equation}
    where $J_{\kappa_n}$ is defined in \eqref{RMinMOLS}. Condition \eqref{MOLS-OC} is then follows from the derivative characterization \eqref{MOLS-DER}. Variational inequality \eqref{MOLS-OCC} is a consequence of the properties of $T$ and the facts that  $a_{\varsigma_n}\to \bar{a}$ in $\|\cdot\|_L$, $u_{\varsigma_n}\to \bar{u}:=u(\bar{a})$, and $z_{\delta_n}\to z$. The proof is complete.
\end{proof}

\section{First-order and Second-order Adjoint Approach for OLS}\label{sec:adjoint}

We now describe the first-order and the second-order adjoint approaches to compute the first-order and the second-order derivatives of the regularized OLS functional. These formulae can be discretized to derive an efficient scheme for the computation of the gradient and the Hessian of regularized OLS objective. The gradient computation by the adjoint approach avoids a direct computation of the first-order derivative of the regularized parameter-to-solution map whereas the Hessian computation by the second-order adjoint approach avoids a direct computation of the second-order  derivative of the regularized parameter-to-solution map. Adjoint methods have been used extensively in the literature and some of the recent developments can be found in \cite{ChoJadKahKhaSam17,JadKhaObeSam17}) and the cited references therein.

Recall that for a fixed $n\in \mathbb{N}$, the regularized output least-squares functional is given by
\begin{equation*}
    \widehat{J}_{\kappa_n}(a):=\frac12\|u_{\varsigma_n}(a)-z_{\delta_n}\|^2_Z+\kappa_n R(u_{\varsigma_n}),
\end{equation*}
where $R$ is a smooth regularizer and $u_{\varsigma_n}(a)$ is the unique solution of the regularized problem \eqref{RVP}, that is,
\begin{equation}\label{RVP2}T_{\tau_n}(a,u_{\varsigma_n}(a),v)+\epsilon_n S(u_{\varsigma_n}(a),v)=m_{\nu_n}(v)+\epsilon_n\ell_{\delta_n}(v),\ \ \text{for every}\ v\in V.\end{equation}
Here $\{\epsilon_n\}$, $\{\tau_n\}$, $\{\kappa_n\}$, $\{\delta_n\}$, and $\{\nu_n\}$ are sequence of  positive reals, $\ell\in V^*$, and for each $n\in \mathbb{N}$, $m_{\nu_n}\in V^*$, $\ell_{\delta_n}\in V^*$, and  $z_{\delta_n}\in  Z$ satisfies \eqref{Noise}. Moreover, $T_{\tau_n}:B\times V\times V\to \mathbb{R}$ satisfies \eqref{TNoise} and the bilinear and symmetric form  $S:V\times V\to \mathbb{R}$ satisfies \eqref{SCond}.

By using the chain rule, the derivative of $\widehat{J}_{\kappa_n}$ at $a \in A$ in any direction $\delta a $ is given by
\begin{equation*}
    D\widehat{J}_{\kappa_n}(a)(\delta a)=\left\langle Du_{\varsigma_n}(a)(\delta a),u_{\varsigma_n}(a)-z_{\delta_n}\right\rangle_Z+\kappa_n DR(a)(\delta a),
\end{equation*}
where $Du_{\varsigma_n}(a)(\delta a)$ is the derivative of the regularized parameter-to-solution map $u_{\varsigma_n}$ and  $DR(a)(\delta a)$ is the derivative of the regularizer $R$, both computed at $a $ in the direction $\delta a $.

For an arbitrary $v\in V$, we define the functional $L_{\kappa_n}:B\times V\to \mathbb{R}$ by
\begin{equation*}
    L_{\kappa_n}(a,v)=\widehat{J}_{\kappa_n}(a)+T_{\tau_n}(a,u_{\varsigma_n}(a),v)+\epsilon_n S(u_{\varsigma_n}(a),v)-m_{\nu_n}(v)-\epsilon_n\ell_{\delta_n}(v).
\end{equation*}
Since $u_{\varsigma_n}(a)$ solves  \eqref{RVP2}, for every $v\in V$, we have $L_{\kappa_n}(a,v)=J_{\kappa_n}(a)$, and consequently, for every $v\in V$ and for every  direction $\delta a$, we have
\begin{equation*}
    \partial_{a }L_{\kappa_n}(a,v)\left( \delta a  \right) =D\widehat{J}_{\kappa_n}(a)\left( \delta a  \right).
\end{equation*}
The key idea for the first-order adjoint method is to choose $v$ to bypass a  direct computation of $Du_{\varsigma_n}(a )(\delta  a )$.n
To understand a choice of $v$, we compute
\begin{multline}
    \partial_{a } L_{\kappa_n}(a,v)\left( \delta
    a  \right)=\left\langle Du_{\varsigma_n}(a)(\delta a ),u_{\varsigma_n}-z_{\delta_n}\right\rangle_Z +\kappa_n DR(a)(\delta a)\\
    +T_{\tau_n}(\delta a,u_{\varsigma_n},v)+T_{\tau_n}(a,Du_{\varsigma_n}(a )(\delta a ),v)+\epsilon_nS( Du_{\varsigma_n}(a )(\delta a ),v). \label{CR2}
\end{multline}
For $a \in A$, and $u_{\varsigma_n}$ satisfying \eqref{RVP}, let $w_{\varsigma_n}(a)$ be the unique solution of the adjoint problem
\begin{equation}\label{ASPP}
    T_{\tau_n}(a,w_{\varsigma_n},v)+\epsilon_nS( w_{\varsigma_n},v) =\left\langle z_{\delta_n}-u_{\varsigma_n},v\right\rangle_Z,\quad \text{for every}\ v\in V.
\end{equation}
We set $v=Du_{\varsigma_n}(a )(\delta a )$ in the above equation and use the symmetry of $T_{\tau_n}$ and $S$ to obtain
\begin{equation}\label{ASPP1}
    T_{\tau_n}(a,Du_{\varsigma_n}(a )(\delta a ),w_{\varsigma_n})+\epsilon_n S( Du_{\varsigma_n}(a )(\delta a ),w_{\varsigma_n}) +\left\langle u_{\varsigma_n}-z_{\delta_n},Du_{\varsigma_n}(a )(\delta a )\right\rangle_Z=0.
\end{equation}
By plugging $v=w_{\varsigma_n}$ in \eqref{CR2}, using \eqref{ASPP1}, we obtain
\begin{equation*}
    \begin{aligned}
        \partial_{a }L_{\kappa_n}(a,w_{\varsigma_n})\left( \delta
        a  \right)&=\left\langle Du_{\varsigma_n}(a)(\delta a ),u_{\varsigma_n}-z_{\delta_n}\right\rangle_Z +\kappa_n DR(a)(\delta a)\\
        \MoveEqLeft[-1]+T_{\tau_n}(\delta a,u_{\varsigma_n},w_{\varsigma_n})+T_{\tau_n}(a,Du_{\varsigma_n}(a )(\delta a ),w_{\varsigma_n})+\epsilon_n S(Du_{\varsigma_n}(a )(\delta a ),w_{\varsigma_n}) \\
        &=\kappa_n DR(a)(\delta a)+T_{\tau_n}(\delta a,u_{\varsigma_n},w_{\varsigma_n}),
    \end{aligned}
\end{equation*}
which gives the following formula for the first-order derivative of $J_{\kappa_n}$:
\begin{equation}\label{FD-OLS}
    D\widehat{J}_{\kappa_n}(a)\left( \delta a  \right) =\kappa_n DR(a)(\delta a)+T_{\tau_n}(\delta a,u_{\varsigma_n},w_{\varsigma_n}).
\end{equation}
In summary, the following scheme computes $D\widehat{J}_{\kappa_n}(a)\left( \delta a  \right) $ for the given direction $\delta a $:
\begin{enumerate}[label=\arabic*., noitemsep]
    \item Compute $u_{\varsigma_n}(a) $ by using \eqref{RVP2}.
    \item Compute $w_{\varsigma_n}(a)$ by using \eqref{ASPP}.
    \item Compute $D\widehat{J}_{\kappa_n}(a)\left( \delta a  \right) $ by using \eqref{FD-OLS}.
\end{enumerate}

We will now derive a second-order adjoint  method for the evaluation of the second-order derivative of the regularized OLS functional. The goal is to derive a formula for the second-order  derivative that does not require the second-order derivative of the regularized parameter-to-solution map. The central idea is to compute  $\delta u_{\varsigma_n}$ directly by using  \eqref{var4RP} and bypass the computation of $\delta^{2}u_{\varsigma_n}$ by an adjoint approach.

Given a fixed direction $\delta a _{2}$ and an arbitrary $v\in V$, we define
\begin{equation*}
    \begin{aligned}
        L_{\kappa_n}(a,v)&=D\widehat{J}_{\kappa_n}(a)(\delta a  _{2})+T_{\tau_n}(a,Du_{\varsigma_n}(a)\delta a_2,v)+\epsilon_n S( Du_{\varsigma_n}(a)\delta a_2,v)+T_{\tau_n}(\delta a_2,u_{\varsigma_n},v)\\
                         &=\left\langle Du_{\varsigma_n}(a)(\delta a_{2}),u_{\varsigma_n}-z_{\delta_n}\right\rangle_Z+\kappa_n DR(a)(\delta a_{2})+T_{\tau_n}(a,Du_{\varsigma_n}(a)\delta a_2,v)\\
        \MoveEqLeft[-1]+\epsilon_n S( Du_{\varsigma_n}(a)\delta a_2,v) +T_{\tau_n}(\delta a_2,u_{\varsigma_n},v).
    \end{aligned}
\end{equation*}
Since $L_{\kappa_n}(a,v)=D\widehat{J}_{\kappa_n}(a)(\delta a_{2})$, for any $v\in  V$,  and hence for any $\delta a  _{1}$, we have 
\begin{equation*}
    \partial_{a } L_{\kappa_n}(a,v)(\delta a
    _{1})=D^{2}\widehat{J}_{\kappa_n}(a)(\delta a  _{1},\delta a  _{2}).
\end{equation*}
We compute the derivative  of $L_{\kappa_n}$ in the direction $\delta a _1$ as follows
\begin{equation*}
    \begin{aligned}
        \partial_{a } L_{\kappa_n}(a,v)(\delta a_{1})&=\left\langle D^{2}u_{\varsigma_n}(a)(\delta a_{1},\delta a  _{2}),u_{\varsigma_n}-z\right\rangle_Z
        +\left\langle Du_{\varsigma_n}(a)(\delta a_{2}),Du_{\varsigma_n}(a)(\delta a_{1})\right\rangle_Z\\
        \MoveEqLeft[-1]+\kappa_n D^{2}R(a)(\delta a_{1},\delta a_{2})+T_{\tau_n}(\delta a_1,Du_{\varsigma_n}(a)\delta a_2,v)\\
        \MoveEqLeft[-9.75]+T_{\tau_n}(a,D^2u_{\varsigma_n}(a)(\delta a_1,\delta a_2),v)\\
        \MoveEqLeft[-1]+\epsilon_n \langle D^2u_{\varsigma_n}(a)(\delta a_1,\delta a_2),v\rangle_Z+T_{\tau_n}(\delta a_2,Du_{\varsigma_n}(a)\delta a_1,v).
    \end{aligned}
\end{equation*}
Let $w_{\varsigma_n}(a)$ be the solution of the  adjoint problem \eqref{ASPP}.
We set $v=D^2u_{\varsigma_n}(a )(\delta a_1,\delta a_2 )$ in \eqref{ASPP} and use the symmetry of $T_{\tau_n}$ and $S$ to obtain
\begin{multline}\label{ASPP2}
    T_{\tau_n}(a,D^2u_{\varsigma_n}(a )(\delta a_1,\delta a_2 ),w_{\varsigma_n})+\epsilon_n S( D^2u_{\varsigma_n}(a )(\delta a_1,\delta a_2 ),w_{\varsigma_n})  \\
    +\left\langle u_{\varsigma}-z_{\delta_n},D^2u_{\varsigma_n}(a )(\delta a_1,\delta a_2 )\right\rangle_Z=0.
\end{multline}
Using \eqref{ASPP2}, we have
\begin{equation*}
    \begin{aligned}
        \partial_{a }L_{\kappa_n}(a,w_{\varsigma_n})(\delta a_{1}) &=\left\langle D^{2}u_{\varsigma_n}(a)(\delta a_{1},\delta a_{2}),u_{\varsigma_n}-z\right\rangle_Z
        +\left\langle Du_{\varsigma_n}(a)(\delta a_{2}),Du_{\varsigma_n}(a)(\delta a_{1})\right\rangle_Z\\
        \MoveEqLeft[-1]+\kappa_n D^{2}R(a)(\delta a_{1},\delta a_{2})+T_{\tau_n}(\delta a_1,Du_{\varsigma_n}(a)\delta a_2,w_{\varsigma_n})\\
        \MoveEqLeft[-1]+T_{\tau_n}(a,D^2u_{\varsigma_n}(a)(\delta a_1,\delta a_2),w_{\varsigma_n})\\
        \MoveEqLeft[-1]+\epsilon_n S( D^2u_{\varsigma_n}(a)(\delta a_1,\delta a_2),w_{\varsigma_n})+T_{\tau_n}(\delta a_2,Du_{\varsigma_n}(a)\delta a_1,w_{\varsigma_n})\\
        &=\kappa_n D^{2}R(a)(\delta a  _{1},\delta a  _{2})+\left\langle Du_{\varsigma_n}(a)(\delta a  _{2}),Du_{\varsigma_n}(a)(\delta a
    _{1})\right\rangle_Z \\
    \MoveEqLeft[-1] +T_{\tau_n}(\delta a_1,Du_{\varsigma_n}(a)\delta a_2,w_{\varsigma_n})
    +T_{\tau_n}(\delta a_2,Du_{\varsigma_n}(a)\delta a_1,w_{\varsigma_n}),
\end{aligned}
\end{equation*}
and consequently, we have
\begin{multline*}
    D^{2}\widehat{J}_{\kappa_n}(a)(\delta a_{1},\delta a_{2})=\kappa_n D^{2}R(a)(\delta a  _{1},\delta a  _{2})+\left\langle Du_{\varsigma_n}(a)(\delta a  _{2}),Du_{\varsigma_n}(a)(\delta a
    _{1})\right\rangle_Z \\
    +T_{\tau_n}(\delta a_1,Du_{\varsigma_n}(a)\delta a_2,w_{\varsigma_n})+T_{\tau_n}(\delta a_2,Du_{\varsigma_n}(a)\delta a_1,w_{\varsigma_n}).
\end{multline*}
In particular,
\begin{multline}
    D^{2}\widehat{J}_{\kappa_n}(a)(\delta a,\delta a)=\kappa_n D^{2}R(a)(\delta a,\delta a)+\left\langle Du_{\varsigma_n}(a)(\delta a),Du_{\varsigma_n}(a)\delta a\right\rangle_Z \\
    +2T_{\tau_n}(\delta a,Du_{\varsigma_n}(a)\delta a,w_{\varsigma_n}).\label{SD1-OLS}
\end{multline}
In summary, the following scheme computes $D^{2}\widehat{J}_{\kappa_n}(a)(\delta a,\delta a)$ for any direction $\delta a$:
\begin{enumerate}[label=\arabic*., noitemsep]
    \item Compute $u_{\varsigma_n}(a)$ by \eqref{RVP2}.
    \item Compute $Du_{\varsigma_n}(a)(\delta a)$ by \eqref{var4RP}.
    \item Compute $w_{\varsigma_n}(a)$ by \eqref{ASPP}.
    \item Compute $D^{2}\widehat{J}_{\kappa_n}(a)(\delta a,\delta a)$ by \eqref{SD1-OLS}.
\end{enumerate}
The above schemes yield efficient formulas for gradient and Hessian computation.

\section{Computational Framework}\label{sec:computational}

In this section, we develop a finite element method based discretization framework for the direct and the inverse problems.
Let $\Th$ be a triangulation of the domain $\Omega $.  We define
$\Ah$ to be the space of all continuous piecewise polynomials of degree
$d_a$ relative to $\Th$.  Similarly, $\Uh$ will be the space of all
continuous piecewise polynomials of degree $d_u$ relative to $\Th$.
Bases for $\Ah$ and $\Uh$ will be represented by $\left\{\psi_1,\psi_2,\ldots,\psi_m\right\}$ and
$\left\{\psi_1,\varphi_2,\ldots,\varphi_n\right\}$, respectively.  The space $\Ah$ is then isomorphic to $\mathbb{R}^m$, and for any
$a\in\Ah$, we define $A\in\mathbb{R}^m $ by $A_i:=a(x_i),\ i=1,2,\ldots,m$, where $\{\psi_1,\psi_2,\ldots,\psi_m\}$ is a nodal basis corresponding to the nodes $\{x_1,x_2,\ldots,x_m\}$.  Conversely, each
$A\in\mathbb{R}^m $ corresponds to $a\in\Ah$ defined by $a:=\sum_{i=1}^mA_i\psi_i$. Similarly, $u\in\Uh$ will correspond to $U\in\mathbb{R}^{n}$, where $U_i:=u(y_i),\ i=1,2,\ldots,n$ and $u=\sum_{i=1}^nU_i\varphi_i$. Here $y_1,y_2,\ldots,y_n$ are the nodes of the mesh defining $\Uh$. Note that although both $\Ah$ and $\Uh$ are defined relative to the same triangles, the nodes are different.

For a fixed $\varsigma_n$, we define $\mathbb{F}_{\varsigma_n}:\mathbb{R}^m \rightarrow\mathbb{R}^{n}$ to be the finite element solution operator
assigning a coefficient $a\in\Ah$ to the approximate solution $v\in\Uh$.
Then $\mathbb{F}_{\varsigma_n}(A)=V_{\varsigma_n}$, where $V_{\varsigma_n}$ is defined by
\begin{equation}\label{vdef}
    \left[K_{\tau_n}(A)+\epsilon_nW\right]V_{\varsigma_n}=P_{\delta_n}
\end{equation}
and $K_{\tau_n}(A)\in\mathbb{R}^{n\times n}$ is the stiffness matrix, $W$ is the matrix generated by $S$, and $P_{\delta_n}\in\mathbb{R}^{n}$ is the load
vector:
\begin{align*}
    K_{\tau_n}(A)_{ij}&=T_{\tau_n}(a,\varphi_i,\varphi_j), &i,j=1,2,\ldots,n,\\
    W_{ij}&=S(\varphi_i,\varphi_j), &i,j=1,2,\ldots,n,\\
    P^i_{\delta_n}&=m_{\nu_n}(\varphi_i)+\epsilon_n\ell_{\delta_n}(\varphi_i), &i=1,2,\ldots,n.
\end{align*}
For future reference, it will be useful to notice that $K_{\tau_n}(A)_{ij}=T_{ijk}A_k$,
where the summation convention is used and $T$ is the tensor defined by
\begin{equation*}
    T_{ijk}=T_{\tau_n}(\psi_k,\varphi_i,\varphi_j),\quad i,j=1,2,\ldots,n,\quad
    k=1,2,\ldots,m.
\end{equation*}
The derivative of the regularized parameter-to-solution map is easily computed as
\begin{equation*}
    D\mathbb{F}_{\varsigma_n}(A)(\delta A):=\dV_{\varsigma_n}=-\left[K_{\tau_n}(A)+\epsilon_nW\right]^{-1}K_{\tau_n}(\dA)V_{\varsigma_n}.
\end{equation*}
To write the formula for $\dV_{\varsigma_n}$ in a tractable form, we define
the matrix $L_{\tau_n}(\mathcal{V})$ by the condition
\begin{equation*}
    L_{\tau_n}(\mathcal{V})A=K_{\tau_n}(A)\mathcal{V}\text{ for all } A\in\mathbb{R}^m ,\mathcal{V}\in\mathbb{R}^{n}.
\end{equation*}
Using this notation,
\begin{equation*}
    \dV_{\varsigma_n}=-\left[K_{\tau_n}(A)+\epsilon_nW\right]^{-1}L_{\tau_n}(V_{\varsigma_n})\dA.
\end{equation*}

\subsection{Discrete OLS}\label{sec:computational:ols}

In the following, for simplicity, we ignore a discretization of the the regularization term. Using the same notation as above, we can define the $L^2$-output least-squares
objective function by
\begin{equation}\label{falkJ}
    \widehat{J}_{n}(A)=\frac{1}{2}(V_{\varsigma_n}-Z_{\delta_n})^T M(V_{\varsigma_n}-Z_{\delta_n}),
\end{equation}
where $V_{\varsigma_n}$ solves \eqref{vdef} and $M$ is the mass matrix.

The gradient of $\widehat{J}_{\kappa_n}$ can be computed as follows
\begin{equation*}
    \begin{aligned}
        D\widehat{J}_{n}(A)\dA&=\dV_{\varsigma_n}^T M(V_{\varsigma_n}-Z_{\delta_n})\\
                              &=-\left(\left[K_{\tau_n}(A)+\epsilon_nW\right]^{-1}L_{\tau_n}(V_{\varsigma_n})\dA\right)^T M(V_{\varsigma_n}-Z_{\delta_n})\\
                              &=-\dA^T L_{\tau_n}(V_{\varsigma_n})^T\left[K_{\tau_n}(A)+\epsilon_nW\right]^{-1}M(V_{\varsigma_n}-Z_{\delta_n}),
    \end{aligned}
\end{equation*}
and hence
\begin{equation*}
    \grad \widehat{J}_{n}(A)=-L_{\tau_n}(V_{\varsigma_n})^T\left[K_{\tau_n}(A)+\epsilon_nW\right]^{-1}M(V_{\varsigma_n}-Z_{\delta_n}).
\end{equation*}
We shall now proceed to compute the Hessian.  First, we observe that
\begin{equation*}
    \begin{aligned}
        D^2\widehat{J}(A)(\dA,\dA)&= \dA^T(-L_{\tau_n}(\dV_{\varsigma_n})^T)\left[K_{\tau_n}(A)+\epsilon_nW\right]^{-1}M(V_{\varsigma_n}-Z_{\delta_n})\\
        \MoveEqLeft[-1]+\delta A^TL_{\tau_n}(V_{\varsigma_n})^T\left[K_{\tau_n}(A)+\epsilon_nW\right]^{-1}K_{\tau_n}(\delta A) \left[K_{\tau_n}(A)+\epsilon_nW\right]^{-1}M(V_{\varsigma_n}-Z_{\delta_n})\\
        \MoveEqLeft[-1]-\delta A^TL_{\tau_n}(V_{\varsigma_n})^T\left[K_{\tau_n}(A)
        +\epsilon_nW\right]^{-1}M\dV_{\varsigma_n}),
    \end{aligned}
\end{equation*}
where we use the formula
\begin{equation*}
    \begin{aligned}
        D\left[K_{\tau_n}(A)+\epsilon_nW\right]^{-1}\dA&=-\left[K_{\tau_n}(A)+\epsilon_nW\right]^{-1}DK_{\tau_n}(A)\dA \left[K_{\tau_n}(A)+\epsilon_nW\right]^{-1}\\
                                                       &=-\left[K_{\tau_n}(A)+\epsilon_nW\right]^{-1}K_{\tau_n}(\delta A) \left[K_{\tau_n}(A)+\epsilon_nW\right]^{-1}.
    \end{aligned}
\end{equation*}
We will simplify all the the terms involved in the formula for the second-order derivative. First of all, we have
\begin{equation*}
    \begin{aligned}
        L_{\tau_n}(V_{\varsigma_n})^T&\left[K_{\tau_n}(A)+\epsilon_nW\right]^{-1}DK_{\tau_n}(A)(\delta A) \left[K_{\tau_n}(A)+\epsilon_nW\right]^{-1}M(V_{\varsigma_n}-Z_{\delta_n})\\
                                     &=L_{\tau_n}(V_{\varsigma_n})^T\left[K_{\tau_n}(A)+\epsilon_nW\right]^{-1}K_{\tau_n}(\delta A)\left[K_{\tau_n}(A)+\epsilon_nW\right]^{-1}M(V_{\varsigma_n}-Z_{\delta_n})\\
                                     &=L_{\tau_n}(V_{\varsigma_n})^T\left[K_{\tau_n}(A)+\epsilon_nW\right]^{-1}
        L_{\tau_n}(\left[K_{\tau_n}(A)+\epsilon_nW\right]^{-1}M(V_{\varsigma_n}-Z_{\delta_n}))\delta A,
    \end{aligned}
\end{equation*}
which, by using the fact that
\begin{equation*}
    L_{\tau_n}(\mathcal{V})^TU=L_{\tau_n}(U)^T\mathcal{V},\quad \text{for all}\ U,\mathcal{V}\in\mathbb{R}^{n}
\end{equation*}
gives
\begin{equation*}
    \begin{aligned}
        -L_{\tau_n}(\dV_{\varsigma_n})^T&\left[K_{\tau_n}(A)+\epsilon_nW\right]^{-1}M(V_{\varsigma_n}-Z_{\delta_n})\\
                                        &=L_{\tau_n}(\left[K_{\tau_n}(A)+\epsilon_nW\right]^{-1}L_{\tau_n}(V_{\varsigma_n})\dA)^T\left[K_{\tau_n}(A)
    +\epsilon_nW\right]^{-1}M(V_{\varsigma_n}-Z_{\delta_n})\\
    &=L_{\tau_n}(\left[K_{\tau_n}(A)
+\epsilon_nW\right]^{-1}M(V_{\varsigma_n}-Z_{\delta_n}))^T\left[K_{\tau_n}(A)+\epsilon_nW\right]^{-1}L_{\tau_n}(V_{\varsigma_n})\dA.
    \end{aligned}
\end{equation*}
Finally,
\begin{multline*}
    -L_{\tau_n}(V_{\varsigma_n})^T\left[K_{\tau_n}(A)
    +\epsilon_nW\right]^{-1}M\dV_{\varsigma_n}\\
    = L_{\tau_n}(V_{\varsigma_n})^T\left[K_{\tau_n}(A)
    +\epsilon_nW\right]^{-1}M\left[K_{\tau_n}(A)+\epsilon_nW\right]^{-1}L_{\tau_n}(V_{\varsigma_n})\dA.
\end{multline*}
Combining the above results, we obtain that
\begin{equation*}
    \begin{aligned}
        \hess \widehat{J}_{\kappa_n}(A)&=L_{\tau_n}(V_{\varsigma_n})^T\left[K_{\tau_n}(A)+\epsilon_nW\right]^{-1}
        L_{\tau_n}(\left[K_{\tau_n}(A)+\epsilon_nW\right]^{-1}M(V_{\varsigma_n}-Z_{\delta_n}))\\
        \MoveEqLeft[-1]+L_{\tau_n}(\left[K_{\tau_n}(A)
        +\epsilon_nW\right]^{-1}M(V_{\varsigma_n}-Z_{\delta_n}))^T\left[K_{\tau_n}(A)+\epsilon_nW\right]^{-1}L_{\tau_n}(V_{\varsigma_n})\\
        \MoveEqLeft[-1]+L_{\tau_n}(V_{\varsigma_n})^T\left[K_{\tau_n}(A)
        +\epsilon_nW\right]^{-1}M\left[K_{\tau_n}(A)+\epsilon_nW\right]^{-1}L_{\tau_n}(V_{\varsigma_n}).
    \end{aligned}
\end{equation*}

We emphasize that, after a discretization, the first-order and  second-order adjoint formulae discussed in the previous section would lead to an alternative scheme for computing the gradient and the Hessian of the OLS objective.

\subsection{Discrete MOLS}\label{sec:computational:mols}

The discrete MOLS function $J_{n}:\mathbb{R}^m \rightarrow\mathbb{R}$ is given by
\begin{equation*}
    J_{n}(A)=\frac{1}{2}(V_{\varsigma_n}-Z_{\delta_n})^T K_{\tau_n}(A)(V_{\varsigma_n}-Z_{\delta_n})+\frac{\epsilon_n}{2}\left(V_{\varsigma_n}-Z_{\delta_n}\right)^T W\left(V_{\varsigma_n}-Z_{\delta_n}\right),
\end{equation*}
where  $V_{\varsigma_n}$ solves \eqref{vdef}, $W$ is the symmetric matrix generated by the bilinear form $S$, and $Z_{\delta_n}$ is the discrete data.

We can now compute the gradient as follows
\begin{equation*}
    \begin{aligned}
        DJ_{n}(A)\dA&:=\dV_{\varsigma_n}^T K_{\tau_n}(A)(V_{\varsigma_n}-Z_{\delta_n})+\frac{1}{2}(V_{\varsigma_n}-Z_{\delta_n})^T DK_{\tau_n}(A)\dA(V_{\varsigma_n}-Z_{\delta_n})\\
        \MoveEqLeft[-1]+\epsilon_n\dV_{\varsigma_n}^T W(V_{\varsigma_n}-Z_{\delta_n})\\
        &=\dV_{\varsigma_n}^T \left[K_{\tau_n}(A)+\epsilon_nW\right](V_{\varsigma_n}-Z_{\delta_n})+\frac{1}{2}(V_{\varsigma_n}-Z_{\delta_n})^T K_{\tau_n}(\dA)(V_{\varsigma_n}-Z_{\delta_n})\\
        &=\left[-\left[K_{\tau_n}(A)+\epsilon_nW\right]^{-1}L_{\tau_n}(V_{\varsigma_n})\dA\right]^T \left[K_{\tau_n}(A)+\epsilon_nW\right](V_{\varsigma_n}-Z_{\delta_n})\\
        \MoveEqLeft[-1]+\frac{1}{2}(V_{\varsigma_n}-Z_{\delta_n})^T K(\dA)(V_{\varsigma_n}-Z_{\delta_n})\\
        &=-\dA^T L_{\tau_n}(V_{\varsigma_n})^T(V_{\varsigma_n}-Z_{\delta_n})+\frac{1}{2}(V_{\varsigma_n}-Z_{\delta_n})^T L_{\tau_n}(V_{\varsigma_n}-Z_{\delta_n})\dA\\
        &=-\dA^T L_{\tau_n}(V_{\varsigma_n})^T(V_{\varsigma_n}-Z_{\delta_n})+\frac{1}{2}\dA^T L_{\tau_n}(V_{\varsigma_n}-Z_{\delta_n})^T(V_{\varsigma_n}-Z_{\delta_n})\\
        &=-\frac{1}{2}\dA^T L_{\tau_n}(V_{\varsigma_n}+Z_{\delta_n})^T(V_{\varsigma_n}-Z_{\delta_n}),
    \end{aligned}
\end{equation*}
which yields
\begin{equation*}
    \grad J_{n}(A)=-\frac{1}{2}L_{\tau_n}(V_{\varsigma_n}+Z_{\delta_n})^T(V_{\varsigma_n}-Z_{\delta_n})
    =-\frac{1}{2}L_{\tau_n}(V_{\varsigma_n})^TV_{\varsigma_n}
    +\frac{1}{2}L_{\tau_n}(Z_{\delta_n})^TZ_{\delta_n}.
\end{equation*}
For the second-order derivative, from 
\begin{equation*}
    DJ_{n}(A)\dA=-\frac{1}{2}\dA^T L_{\tau_n}(V_{\varsigma_n})^TV_{\varsigma_n}+\frac{1}{2}\dA^T L_{\tau_n}(Z_{\delta_n})^TZ_{\delta_n}
\end{equation*}
we evaluate
\begin{equation*}
    D^2J_{n}(A)(\dA,\dA)=-\frac{1}{2}\dA^T L_{\tau_n}(\dV_{\varsigma_n})^TV_{\varsigma_n}-\frac{1}{2}\dA^T L_{\tau_n}(V_{\varsigma_n})^T\dV_{\varsigma_n}.
\end{equation*}
Since $L_{\tau_n}(U)^T\mathcal{V}=L_{\tau_n}(\mathcal{V})^TU$ for all $U,\mathcal{V}\in \mathbb{R}^n$,
and hence,
\begin{equation*}
    L_{\tau_n}(\dV_{\varsigma_n})^TV_{\varsigma_n} = L_{\tau_n}(V_{\varsigma_n})^T\dV_{\varsigma_n} = -L_{\tau_n}(V_{\varsigma_n})^T\left[K_{\tau_n}(A)+\epsilon_nW\right]^{-1}L_{\tau_n}(V)\dA.
\end{equation*}
Consequently,
\begin{equation*}
    \begin{aligned}
        D^2J_{n}(A)(\dA,\dA)&=-\dA^T L_{\tau_n}(V_{\varsigma_n})^T\dV_{\varsigma_n}\\
                            &=\dA^T L_{\tau_n}(V_{\varsigma_n})^T\left[K_{\tau_n}(A)+\epsilon_nW\right]^{-1}L_{\tau_n}(V_{\varsigma_n})\dA,
    \end{aligned}
\end{equation*}
which shows that
\begin{equation*}
    \hess J_{n}(A)=L_{\tau_n}(V_{\varsigma_n})^T\left[K_{\tau_n}(A)+\epsilon_nW\right]^{-1}L_{\tau_n}(V_{\varsigma_n}).
\end{equation*}

Summarizing, the necessary formulas are 
\begin{align*}
    J_{n}(A)&=\frac{1}{2}(V_{\varsigma_n}-Z_{\delta_n})^T \left[K_{\tau_n}(A)+\epsilon_nW\right](V_{\varsigma_n}-Z_{\delta_n}),\\
    \grad J_{n}(A)&=-\frac{1}{2}L_{\tau_n}(V_{\varsigma_n})^TV_{\varsigma_n}
    +\frac{1}{2}L_{\tau_n}(Z_{\delta_n})^TZ_{\delta_n},\\
    \hess J_{n}(A)&=L_{\tau_n}(V_{\varsigma_n})^T\left[K_{\tau_n}(A)+\epsilon_nW\right]^{-1}L_{\tau_n}(V_{\varsigma_n}).
\end{align*}

\section{Computational Experiments}\label{sec:experiments}

We now report preliminary numerical experiments to demonstrate the feasibility of the proposed framework. We will identify the coefficient $a$ in the Neumann boundary value problem \eqref{Neumann}. Our experiments are of synthetic nature, and hence the data vectors are computed, not measured. We solve numerically the regularized problems \eqref{RMinOLS} and  \eqref{MOLSNC} by using the piecewise linear finite elements. We used the finite element library FreeFem++ \cite{hecht2012new}.  For simplicity, we used the $H^{1}(\Omega )$ norm as the regularizer. The choose the regularization parameters $\kappa_{n}$ and $\varepsilon _{n}$ by trial and error. The elliptic regularization of the variational problem was crucial in the identification process. As expected, the reconstruction process failed for $\varepsilon _{n}=0$ (see \cref{Fig:FreeNoisekappa0001eps0}). Finding a stable numerical solution of a pure Neumann problem is quite challenging (see \cite{BocLeh05,Dai07})  and our computations show that the elliptic regularization does a remarkable job in giving a stable solution.

The (normalized) unique solution is $\bar{u}(x_{1},x_{2})=\cos
(\pi x_{1}^{2})\cos (2\pi x_{2})$ and $\bar{a}(x_{1},x_{2})=1$.  In our experiment, both the OLS objective and the MOLS objective gave quite a satisfactory reconstruction,  see \cref{Table_OLS_Neumann,Table_MOLS_Neumann} and
\cref{Fig:OLSFreeNoisekappa0001eps0001,Fig:MOLSFreeNoisekappa0001eps0001}.  To study the influence of noise, we considered contaminated  data $z_{\delta _{n}}=z+\delta _{n}\eta (t),$ with $\eta (t)$ uniformly distributed in $[0,1]$. The reconstruction is again quite stable, as seen in \cref{Table_OLS_NeumannNoisy} and
\cref{Fig:kappa0001eps0001delta01}.
\begin{table}[tp]
    \caption{Reconstruction error for the OLS  for different discretization level and $\kappa=\epsilon=0.0001$.}
    \label{Table_OLS_Neumann}\centering
    \begin{tabular}{
            S[table-format=1.7,group-digits=false]
            S[table-format=1.2e-1,scientific-notation=true]
            S[table-format=1.2e-1,scientific-notation=true]
            S[table-format=1.2e-1,scientific-notation=true]
            S[table-format=1.2e-1,scientific-notation=true]
        }
        \toprule
        {$h$} & 
        {$\frac{\left\Vert a^{h}-I_{h}\bar{a}_{0}\right\Vert _{L^{2}(\Omega )}}{\left\Vert I_{h}\bar{a}_{0}\right\Vert _{L^{2}(\Omega )}}$} &
        {$\frac{\left\Vert u^{h}-u_{0}^{h}\right\Vert _{L^{2}(\Omega )}}{\left\Vert u_{0}^{h}\right\Vert _{L^{2}(\Omega )}}$} & {$\frac{\left\Vert a^{h}-I_{h}\bar{a}_{0}\right\Vert _{L^{\infty }(\Omega )}}{\left\Vert I_{h}\bar{a}_{0}\right\Vert _{L^{\infty }(\Omega )}}$} & 
        {$\frac{\left\Vert u^{h}-u_{0}^{h}\right\Vert _{L^{\infty }(\Omega )}}{\left\Vert u_{0}^{h}\right\Vert _{L^{\infty }(\Omega )}}$} \\ 
        \midrule
        0.0471405 & 1.13e-02 & 2.13e-03 & 3.34e-02 & 9.61e-03 \\ 
        0.0353553 & 6.96e-03 & 1.27e-03 & 1.91e-02 & 6.66e-03 \\ 
        0.0282843 & 5.05e-03 & 8.90e-04 & 1.38e-02 & 4.87e-03 \\ 
        0.0235702 & 4.03e-03 & 7.19e-04 & 9.76e-03 & 3.83e-03 \\ 
        0.0202031 & 3.34e-03 & 6.11e-04 & 8.24e-03 & 2.98e-03 \\ 
        0.0176777 & 3.23e-03 & 6.07e-04 & 8.65e-03 & 2.43e-03 \\ 
        \bottomrule
    \end{tabular}
\end{table}
\begin{table}[tp]
    \caption{Reconstruction error for the MOLS for different discretization level and  $\kappa=0.01, \epsilon=0.0001.$ }
    \label{Table_MOLS_Neumann}\centering
    \begin{tabular}{
            S[table-format=1.7,group-digits=false]
            S[table-format=1.2e-1,scientific-notation=true]
            S[table-format=1.2e-1,scientific-notation=true]
            S[table-format=1.2e-1,scientific-notation=true]
            S[table-format=1.2e-1,scientific-notation=true]
        }
        \toprule
        {$h$} & 
        {$\frac{\left\Vert a^{h}-I_{h}\bar{a}_{0}\right\Vert _{L^{2}(\Omega )}}{\left\Vert I_{h}\bar{a}_{0}\right\Vert _{L^{2}(\Omega )}}$} &
        {$\frac{\left\Vert u^{h}-u_{0}^{h}\right\Vert _{L^{2}(\Omega )}}{\left\Vert u_{0}^{h}\right\Vert _{L^{2}(\Omega )}}$} & {$\frac{\left\Vert a^{h}-I_{h}\bar{a}_{0}\right\Vert _{L^{\infty }(\Omega )}}{\left\Vert I_{h}\bar{a}_{0}\right\Vert _{L^{\infty }(\Omega )}}$} & 
        {$\frac{\left\Vert u^{h}-u_{0}^{h}\right\Vert _{L^{\infty }(\Omega )}}{\left\Vert u_{0}^{h}\right\Vert _{L^{\infty }(\Omega )}}$} \\ 
        \midrule
        0.0471405 & 9.54e-03 & 4.37e-03 & 4.32e-02 & 1.16e-02 \\ 
        0.0353553 & 5.83e-03 & 2.50e-03 & 2.50e-02 & 7.50e-03 \\ 
        0.0282843 & 4.24e-03 & 1.66e-03 & 1.70e-02 & 5.49e-03 \\ 
        0.0235702 & 3.34e-03 & 1.22e-03 & 1.23e-02 & 4.16e-03 \\ 
        0.0202031 & 2.82e-03 & 1.04e-03 & 9.77e-03 & 3.54e-03 \\ 
        0.0176777 & 2.36e-03 & 9.21e-04 & 8.18e-03 & 3.24e-03 \\ 
        \bottomrule
    \end{tabular}
\end{table}
\begin{table}[tp]
    \caption{Reconstruction error for the OLS for different noise levels $\delta_n$ for $h=0.0176777$, $\kappa=\epsilon=0.0001$.}
    \label{Table_OLS_NeumannNoisy}\centering
    \begin{tabular}{
            S[table-format=1e-1,scientific-notation=true]
            S[table-format=1.2e-1,scientific-notation=true]
            S[table-format=1.2e-1,scientific-notation=true]
            S[table-format=1.2e-1,scientific-notation=true]
            S[table-format=1.2e-1,scientific-notation=true]
        }
        \toprule
        {$\delta_n$} & 
        {$\frac{\left\Vert a^{h}-I_{h}\bar{a}\right\Vert _{L^{2}(\Omega )}}{\left\Vert I_{h}\bar{a}\right\Vert _{L^{2}(\Omega )}}$} &
        {$\frac{\left\Vert u^{h}-u^{h}\right\Vert _{L^{2}(\Omega )}}{\left\Vert u^{h}\right\Vert _{L^{2}(\Omega )}}$} & {$\frac{\left\Vert a^{h}-I_{h}\bar{a}\right\Vert _{L^{\infty }(\Omega )}}{\left\Vert I_{h}\bar{a}\right\Vert _{L^{\infty }(\Omega )}}$} & 
        {$\frac{\left\Vert u^{h}-u^{h}\right\Vert _{L^{\infty }(\Omega )}}{\left\Vert u^{h}\right\Vert _{L^{\infty }(\Omega )}}$} \\ 
        \midrule
        1e-01 & 9.22e-03 & 9.01e-02 & 3.53e-02 & 5.69e-02 \\ 
        1e-02 & 3.37e-03 & 9.03e-03 & 9.61e-03 & 6.87e-03 \\ 
        1e-03 & 3.23e-03 & 1.09e-03 & 8.70e-03 & 2.28e-03 \\ 
        \bottomrule
    \end{tabular}
\end{table}

\clearpage

\begin{figure}[p]
    \centering
    \begin{subfigure}[t]{0.329\textwidth}
        \centering
        \includegraphics[height=3cm,width=\textwidth]{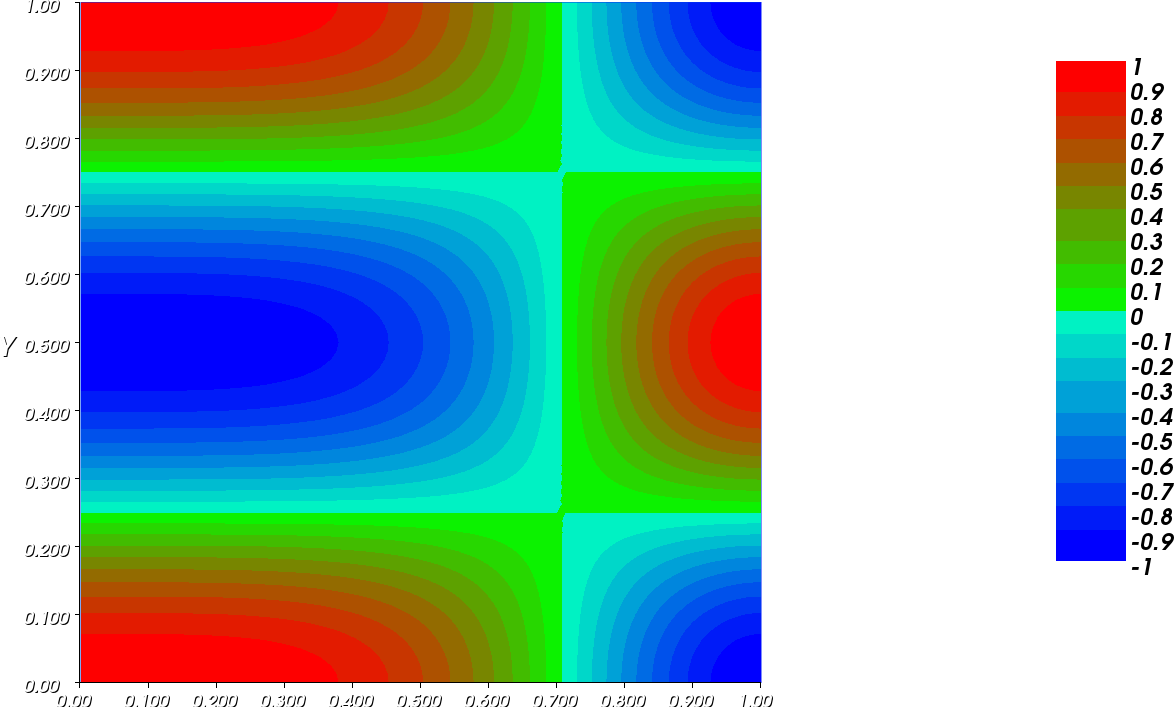}
        \caption{data $z$}
    \end{subfigure}
    \hfill
    \begin{subfigure}[t]{0.329\textwidth}
        \includegraphics[height=3cm,width=\textwidth]{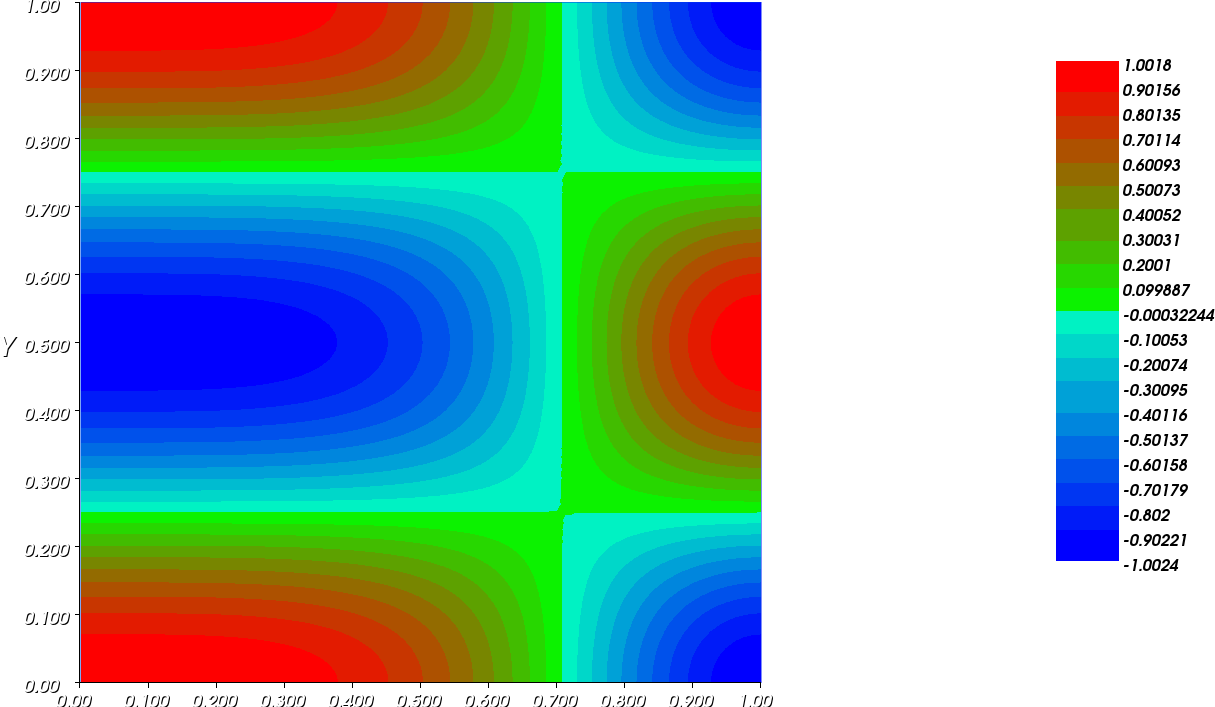}
        \caption{estimated $u$}
    \end{subfigure}
    \hfill
    \begin{subfigure}[t]{0.329\textwidth}
        \includegraphics[height=3cm,width=\textwidth]{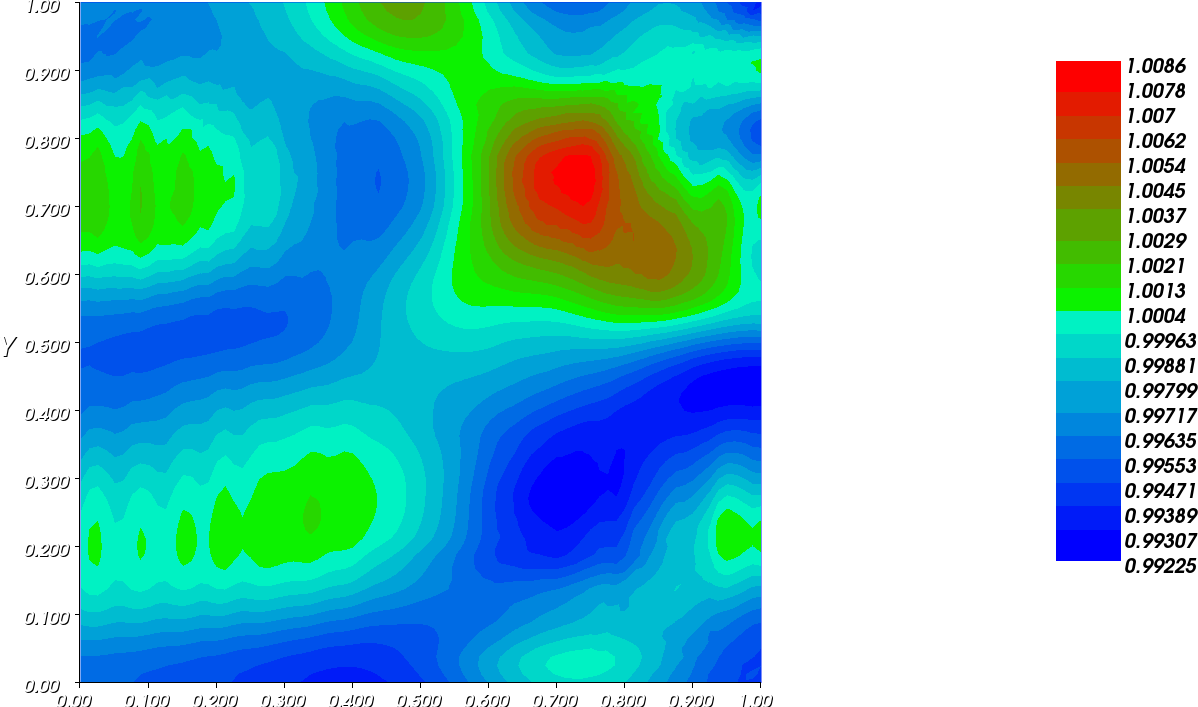}
        \caption{estimated $a$}
    \end{subfigure}
    \caption{Reconstruction with no noise for $h=0.0176777$, $\kappa=\varepsilon=0.0001$ by OLS approach.}
    \label{Fig:OLSFreeNoisekappa0001eps0001}
\end{figure}
\begin{figure}[p]
    \centering
    \begin{subfigure}[t]{0.329\textwidth}
        \centering
        \includegraphics[height=3cm,width=\textwidth]{OLS_Neumann_z_2D_n80}
        \caption{data $z$}
    \end{subfigure}
    \hfill
    \begin{subfigure}[t]{0.329\textwidth}
        \includegraphics[height=3cm,width=\textwidth]{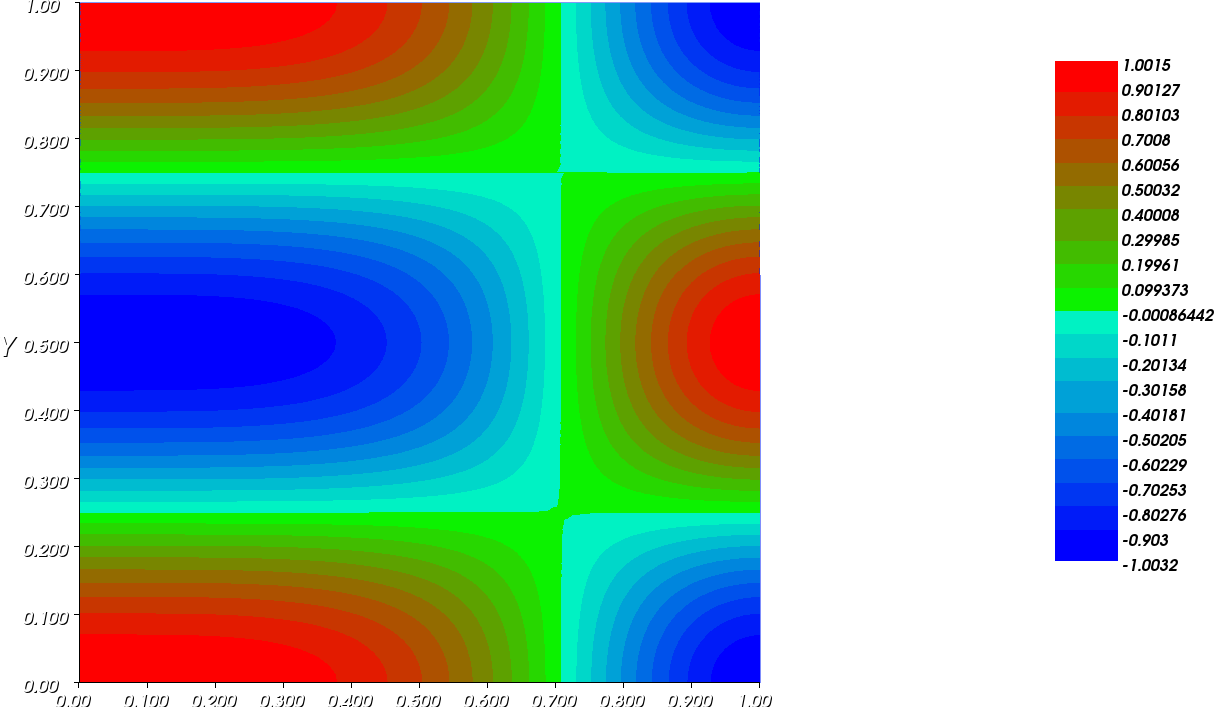}
        \caption{estimated $u$}
    \end{subfigure}
    \hfill
    \begin{subfigure}[t]{0.329\textwidth}
        \includegraphics[height=3cm,width=\textwidth]{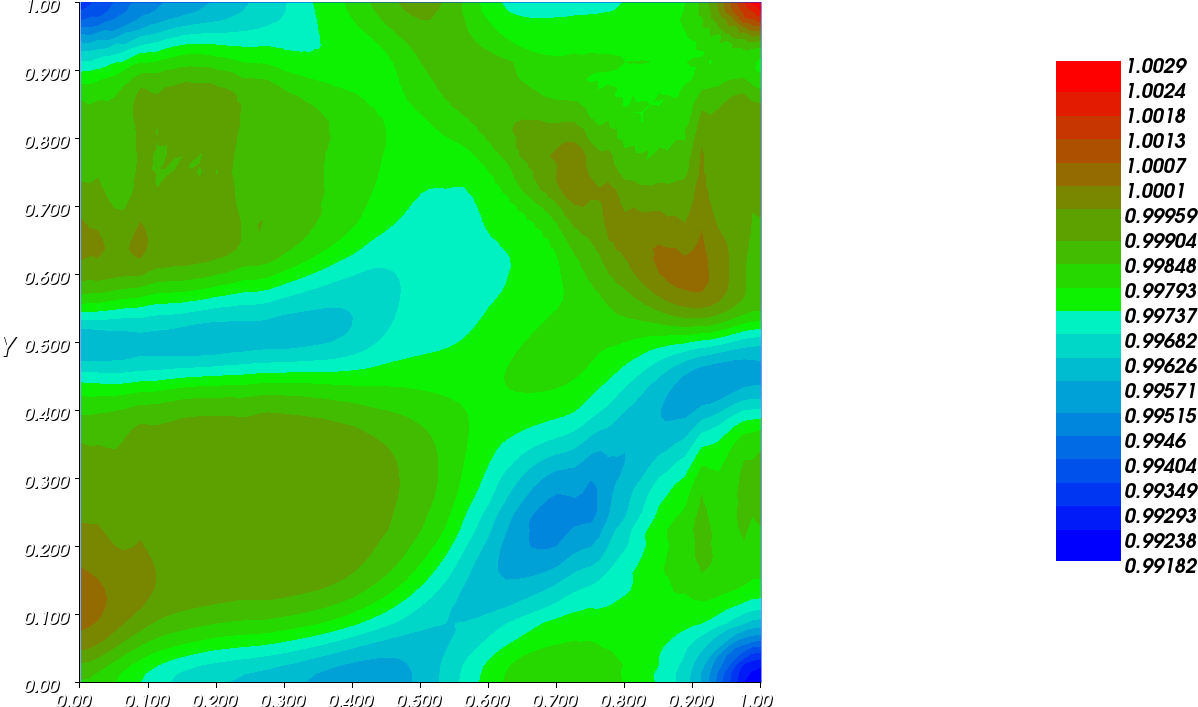}
        \caption{estimated $a$}
    \end{subfigure}
    \caption{Reconstruction with no noise for $h=0.0176777$, $\kappa=0.01$,  $\varepsilon=0.0001$ by MOLS approach.}
    \label{Fig:MOLSFreeNoisekappa0001eps0001}
\end{figure}
\begin{figure}[p]
    \centering
    \begin{subfigure}[t]{0.329\textwidth}
        \centering
        \includegraphics[height=3cm,width=\textwidth]{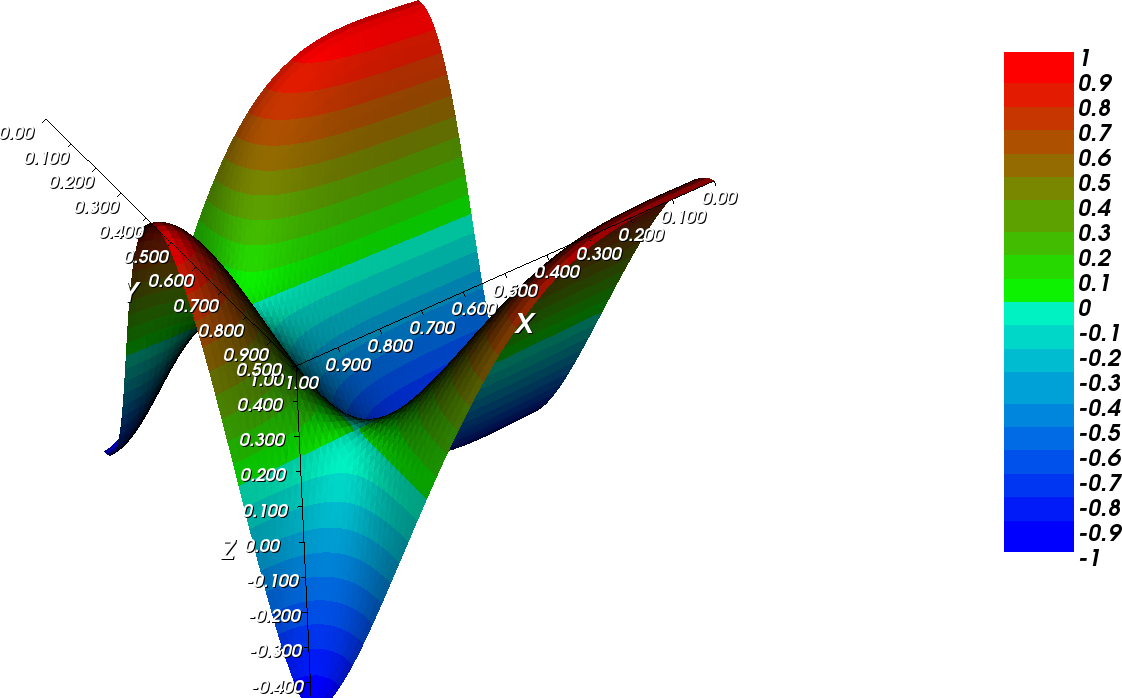}
        \caption{data $z$}
    \end{subfigure}
    \hfill
    \begin{subfigure}[t]{0.329\textwidth}
        \includegraphics[height=3cm,width=\textwidth]{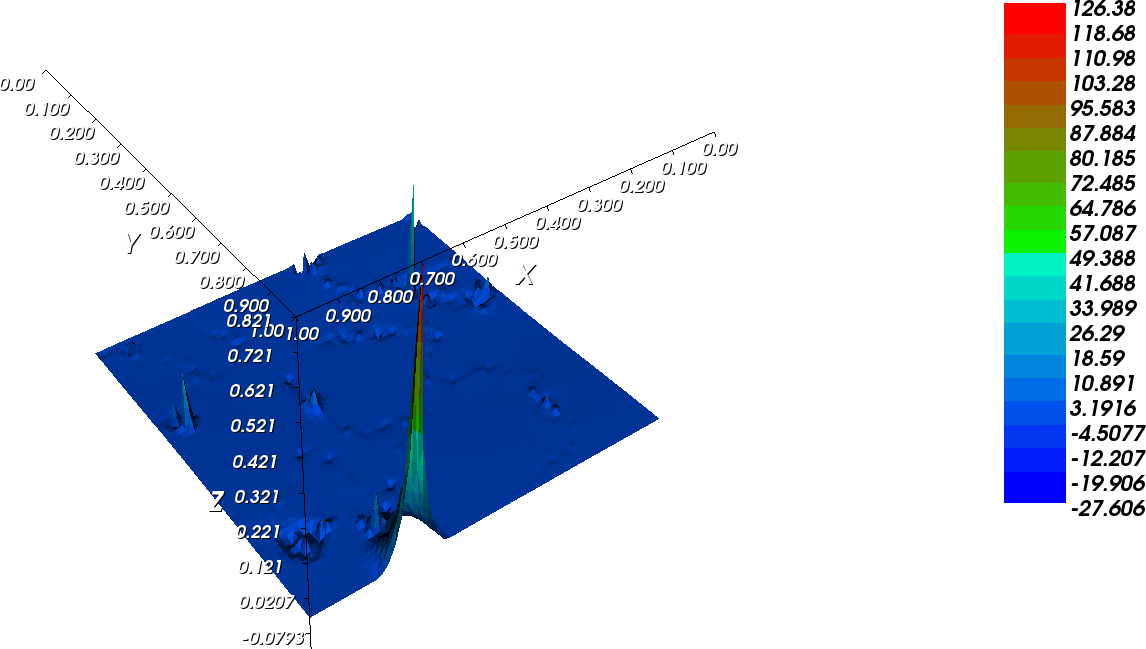}
        \caption{estimated $u$}
    \end{subfigure}
    \hfill
    \begin{subfigure}[t]{0.329\textwidth}
        \includegraphics[height=3cm,width=\textwidth]{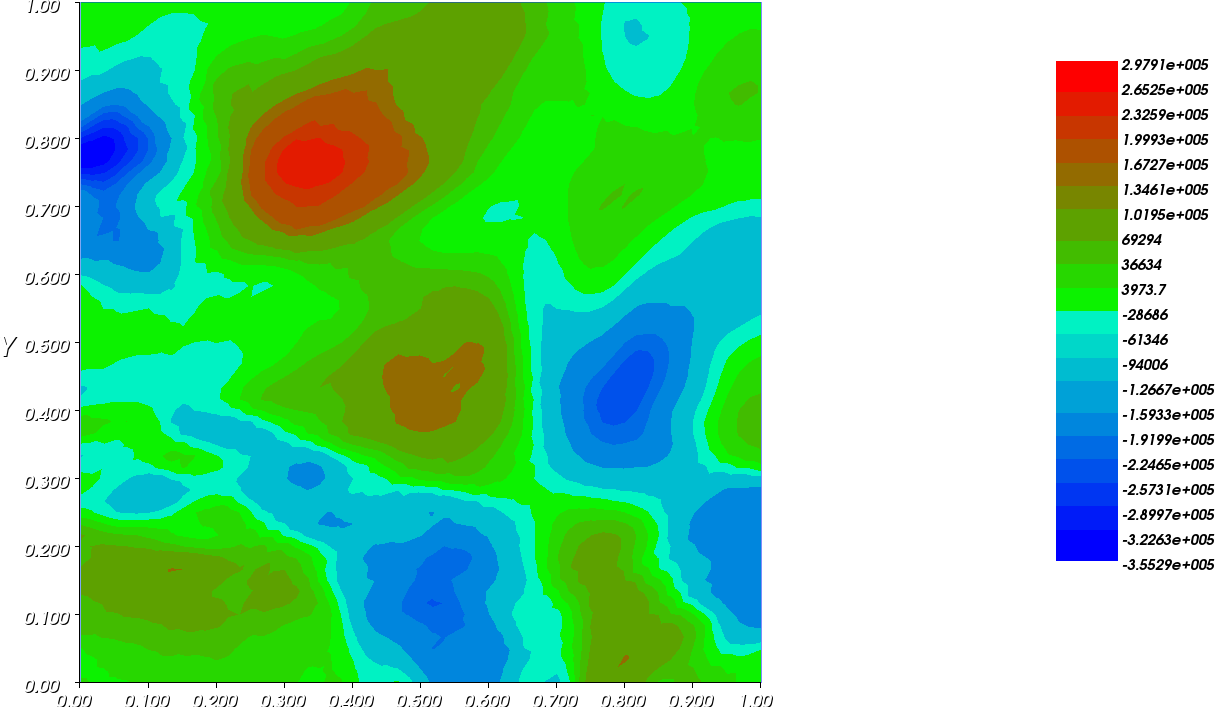}
        \caption{estimated $a$}
    \end{subfigure}
    \caption{Failed reconstruction for $h=0.0235702$, $\kappa=0.0001$,$\varepsilon=0$ by the OLS approach.}
    \label{Fig:FreeNoisekappa0001eps0}
\end{figure}
\begin{figure}[p]
    \centering
    \begin{subfigure}[t]{0.329\textwidth}
        \centering
        \includegraphics[height=3cm,width=\textwidth]{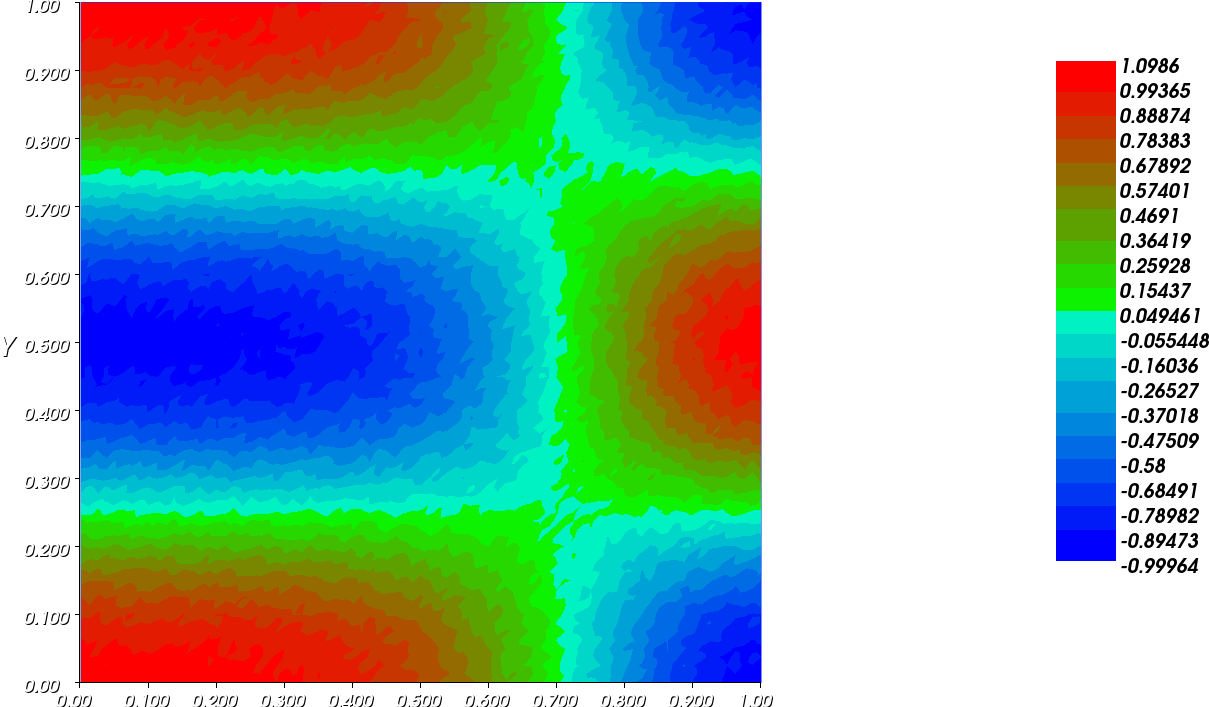}
        \caption{data $z$}
    \end{subfigure}
    \hfill
    \begin{subfigure}[t]{0.329\textwidth}
        \includegraphics[height=3cm,width=\textwidth]{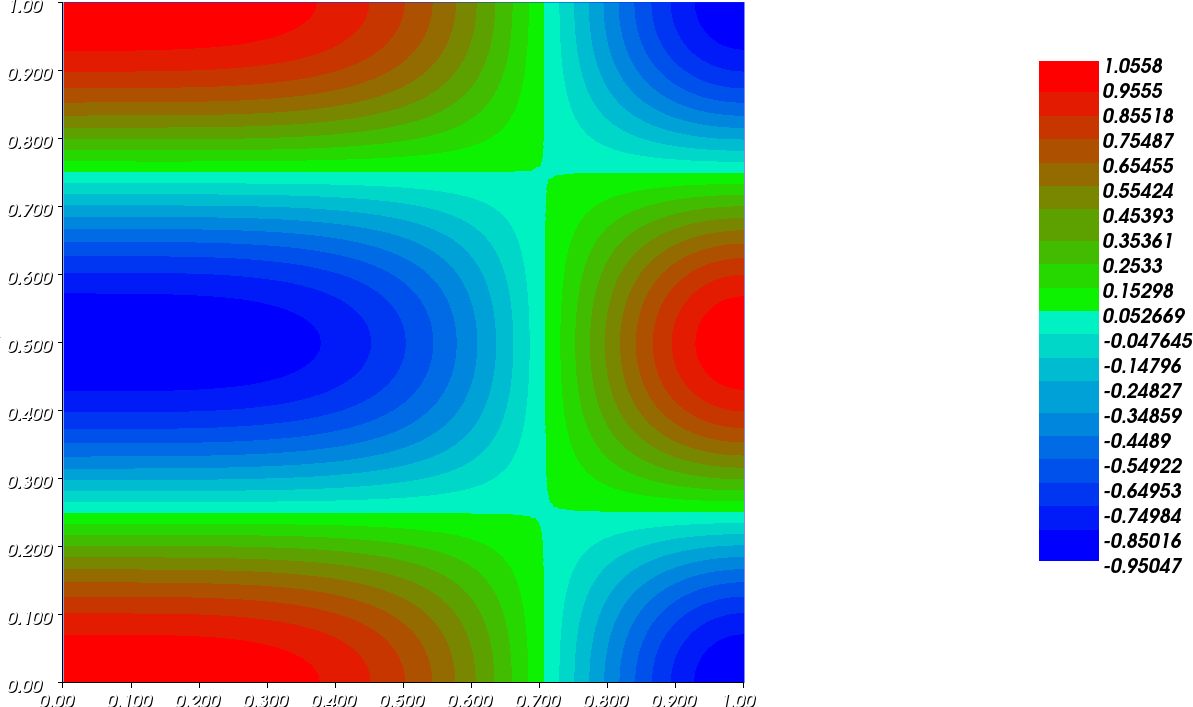}
        \caption{estimated $u$}
    \end{subfigure}
    \hfill
    \begin{subfigure}[t]{0.329\textwidth}
        \includegraphics[height=3cm,width=\textwidth]{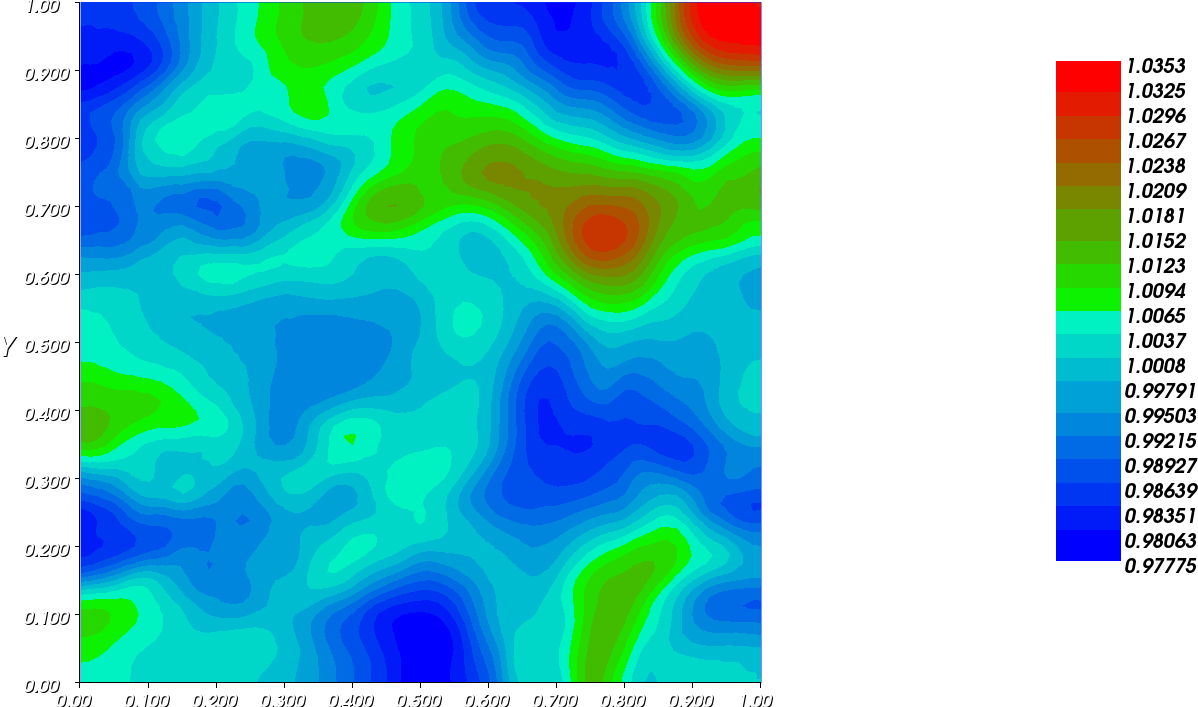}
        \caption{estimated $a$}
    \end{subfigure}
    \caption{Reconstruction  for $h=0.0176777$, $\kappa=\varepsilon=0.0001$, and noise level $\delta_{n}=0.1$ by the OLS approach.}
    \label{Fig:kappa0001eps0001delta01}
\end{figure}

\clearpage

\section{{Concluding Remarks}}

We explored the inverse problem of parameter identification in non-elliptic variational problems by posing optimization problems using the OLS and the MOLS functionals. We regularized the underlying non-elliptic variational problem and studied the features of the regularized parameter-to-solution map. For the set-valued parameter-to-solution map, we relied on the notion of the first-order and the second-order contingent derivatives. To the best  of our knowledge, this is the first work where tools from set-valued optimization have been employed to assist the study of inverse problems of parameter identification. It would be of interest to explore what derivatives of set-valued maps are most convenient
for this kind of research. Detailed numerical experimentation, taking into account the data perturbation, is of paramount importance and will be done in future work. An extension of the present approach to inverse problems in noncoercive variational inequalities also seems to be a promising topic to explore.

\section*{Acknowledgements} 
We are grateful to the reviewers for the careful reading and suggestions. The research of Christian Clason is supported by DFG grant Cl 487/1-1. The research of Akhtar Khan is supported by National Science Foundation grant 005613-002. Miguel Sama's work is partially supported by Ministerio de Econom\`{i}a y Competitividad (Spain), project MTM2015-68103-P and grant  2018-MAT14 (ETSI Industriales, UNED).

\printbibliography

\end{document}